\newtheorem{hypo}{Hypothesis}
\newtheorem{prop}[hypo]{Proposition}
\newtheorem{thm}[hypo]{Theorem}
\newtheorem{lem}[hypo]{Lemma}
\newtheorem{defi}[hypo]{Definition}
\newtheorem{rqe}[hypo]{Remark}
\newtheorem{coro}[hypo]{Corollary}
\newtheorem{exa}[hypo]{Example}
\DeclareMathOperator{\base}{Base}
\DeclareMathOperator{\val}{val}
\DeclareMathOperator{\sing}{Sing}
\def\F{\mathcal{F}}
\newcommand {\refeq}[1] {(\ref{#1})}
\title{Normal class and normal lines of algebraic hypersurfaces}
\date\today
\author{Alfrederic Josse}
\address{Universit\'e de Brest,
Laboratoire de Math\'ematiques de Bretagne Atlantique, UMR CNRS 6205, 29238 Brest cedex, France}
\email{alfrederic.josse@univ-brest.fr}
\author{Fran\c{c}oise P\`ene}
\address{Universit\'e de Brest and Institut Universitaire de France,
Laboratoire de Math\'ematiques de Bretagne Atlantique, UMR CNRS 6205, 29238 Brest cedex, France}
\email{francoise.pene@univ-brest.fr}
\subjclass[2000]{14J99,14H50,14E05,14N05,14N10}
\keywords{projective normal, class, Pl\"ucker, Grassmannian, Puiseux.}
\begin{document}
\begin{abstract}
We are interested in the normal class of an algebraic hypersurface $\mathcal Z$ in
the complexified euclidean projective space $\mathbb P^n$, that is the
number of normal lines to $\mathcal Z$ passing through a generic point of $\mathbb P^n$.
Thanks to the notion of normal polars, we state a formula for the normal class valid for a general hypersurface $\mathcal Z\subset\mathbb P^n$. We give a generic result and illustrate our formula on examples
in $\mathbb P^n$. We define the orthogonal incidence variety and
compute the Schubert class of the variety of projective normal lines to a surface of $\mathbb P^3$ in the Chow ring of $\mathbb G(1,3)$.
We complete our work with a generalization of Salmon's formula for the normal class of a Pl\"ucker
curve to any plane curve with any kind of singularity.
\end{abstract}
\maketitle
\section*{Introduction}
The notion of normal lines to an hypersurface of an euclidean space
is extended here to the complexified euclidean projective space $\mathbb P^n$ ($n\ge 2$). 
In this setting, $\mathcal H^\infty$ the hyperplane at infinity is fixed, together with
the umbilical at infinity $\mathcal U_\infty\subset\mathcal H^\infty$, the smooth quadric in $\mathcal H^\infty$ corresponding to the intersection
of $\mathcal H^\infty$ with any hypersphere (see Section \ref{DEFI0} for details). 
The aim of the present work is the study the {\bf normal class} $c_\nu(\mathcal Z)$ of a hypersurface $\mathcal Z$ of 
$\mathbb P^n$,
that is the number of $m\in\mathcal Z$ such that the projective normal line $\mathcal N_{m}(\mathcal Z)$ to 
$\mathcal Z$ at $m$ passing through a generic $m_1\in\mathbb P^n$
(see Section \ref{SEC00} for details). 
Our estimates provide upper bounds for the number of normal lines, of a real algebraic surface
in an $n$-dimensional affine euclidean space $E_n$, passing through a generic point in $E_n$.
Let us consider the \textbf{variety $\mathfrak{N}_{\mathcal{Z}}$ of projective normal lines of }$ \mathcal Z$ by
\[
\mathfrak{N}_{\mathcal{Z}}:=\overline{\{\mathcal{N}_{m}(\mathcal{Z});m\in \mathcal{Z}\}}\subset
 \mathbb{G}(1,n)\subset \mathbb P^{\frac{n(n+1)}2-1}
\] 
and its Schubert class $\mathfrak{n}_{\mathcal{Z}}:=[\mathfrak{N}_{\mathcal{Z}
}]\in A^{n-1}(\mathbb{G}(1,n))$ (when $\dim \mathfrak{N}_{\mathcal{Z}}=n-1$).
The fact that $PGL(n,\mathbb C)$ {\bf does not preserve normal lines} complicates our study compared to the study of tangent hyperplanes.
We prove namely the following result valid for a wide family of surfaces 
of $\mathbb P^n$.
Let $\mathcal Z=V(F)$ be an irreducible hypersurface of $\mathbb P^n$. We write $\mathcal Z_\infty:=\mathcal Z\cap
\mathcal H^\infty$.
Note that the singular points of $\mathcal Z_\infty$ correspond
to the points of tangency of $\mathcal Z$ with $\mathcal H^\infty$.

\begin{thm}\label{thmhypersurface}
Let $\mathcal Z\in\mathbb P^n$ be a smooth irreducible hypersurface of degree $d_{\mathcal Z}\ge 2$ such that $\mathcal H^\infty$ is not tangent to $\mathcal Z$ and that
at any $m\in\mathcal Z_\infty\cap\mathcal U_\infty$, the tangent planes to 
$\mathcal Z_\infty$ and to $\mathcal U_\infty$ at $m$ are distinct.
Then the normal class $c_\nu(\mathcal Z)$ of $\mathcal Z$ is
$$c_\nu(\mathcal Z)=d_{\mathcal Z}\sum_{k=0}^{n-1}(d_{\mathcal Z}-1)^k.$$
In particular,
\begin{itemize}
\item if $d_\mathcal Z=2$, $c_\nu(\mathcal Z)=n$;
\item if $n=2$, $c_\nu(\mathcal Z)=d_{\mathcal Z}$;
\item if $n=3$, $c_\nu(\mathcal Z)=d_{\mathcal Z}^3-d_{\mathcal Z}^2+d_{\mathcal Z}$;
\item if $n=4$, $c_\nu(\mathcal Z)=d_{\mathcal Z}^4-2d_{\mathcal Z}^3+2d_{\mathcal Z}^2$;
\item if $n=5$, $c_\nu(\mathcal Z)=d_{\mathcal Z}^5-3d_{\mathcal Z}^4+4d_{\mathcal Z}^3-2d_{\mathcal Z}^2+d_{\mathcal Z}$.
\end{itemize}
The normal class of an hyperplane $\mathcal H\subset\mathbb P^n$ (other than $\mathcal H^\infty$) is $c_\nu(\mathcal H)=1$.
\end{thm}
Actually we establish a general formula which is valid for a wider family of hypersurfaces of $\mathbb P^n$.
The notion of normal polars $\mathcal P_{A,\mathcal Z}$
plays an important role in our study. It is a notion analogous to the notion
of polars \cite{Dolga}.
Given an irreducible hypersurface 
$\mathcal Z\subset\mathbb P^n$ of degree $d_{\mathcal Z}$, we extend the definition of the line 
$\mathcal N_m(\mathcal S)$ to any $m\in\mathbb P^n$. 
We then define a regular map $\alpha_{\mathcal Z}:\mathbb P^n\setminus\mathcal B^{(0)}_{\mathcal Z}\rightarrow
\mathbb P^{\frac{n(n+1)}2-1}$ corresponding to $m\mapsto\mathcal N_m(\mathcal Z)$ (where $\mathcal B^{(0)}_\mathcal Z$
is the set of base points of $\alpha_{\mathcal Z}$).
We will see that $\mathcal B_{\mathcal Z}:=\mathcal B^{(0)}_{\mathcal Z}\cap\mathcal Z$ corresponds to the union of the set of singular points of $\mathcal Z$, of the set of points of tangency of $\mathcal Z$ with $\mathcal H^\infty$ and of the set of points of tangency of $\mathcal Z_\infty$ with $\mathcal U_\infty$.
For any $A\in\mathbb P^n$, we will introduce the notion of {\bf normal polar}
$\mathcal P_{A,\mathcal Z}$ of $\mathcal Z$ with respect to $A$
as the set of $m\in\mathbb P^n$
such that either  $m\in\mathcal B^{(0)}_{\mathcal Z}$ or $A\in \mathcal N_m(\mathcal Z)$.
We will see that, if $\dim\mathcal B^{(0)}_{\mathcal Z}\le 1$, then, for a generic $A\in\mathbb P^n$, 
$$\dim\mathcal P_{A,\mathcal Z}=1\quad\mbox{and}\quad
\deg \left(\mathcal P_{A,\mathcal Z}\right)= \sum_{k=0}^{n-1}(d_{\mathcal Z}-1)^k.$$
\begin{thm}\label{formulegeneralehypersurface}
Let $\mathcal Z$ be an irreducible hypersurface of $\mathbb P^n$
with isolated singularities, admitting a finite number of
points of tangency with $\mathcal H^\infty$ and such that 
$\mathcal Z_\infty$
has a finite number of  points of tangency with $\mathcal U_\infty$.
Then the normal class $c_\nu(\mathcal Z)$ 
of $\mathcal Z$ is given by
\[
c_\nu(\mathcal Z)=d_{\mathcal Z}.\sum_{k=0}^{n-1}(d_{\mathcal Z}-1)^k 
-\sum_{P\in B_{\mathcal Z}} i_P(\mathcal Z,{\mathcal P}_{A,\mathcal Z})\, ,
\]
for a generic $A\in \mathbb P^n$, where $i_P(\mathcal Z,{\mathcal P}_{A,\mathcal Z})$
is the intersection multiplicity of $\mathcal Z$ with ${\mathcal P}_{A,\mathcal Z}$.
\end{thm}
In dimension 3, we obtain the following result.
\begin{thm}[n=3, normal class and Chow ring]\label{formulegeneralesurface}
Let $\mathcal S$ be an irreducible surface of $\mathbb P^3$
with isolated singularities, admitting a finite number of
points of tangency with $\mathcal H^\infty$ and such that 
$\mathcal S_\infty$
has a finite number of  (non singular) points of tangency with $\mathcal U_\infty$.
Then 
$$\mathfrak{n}_{\mathcal{S}}=c_\nu(\mathcal S).\sigma_2+d_{\mathcal S}(d_{\mathcal S}-1).\sigma _{1,1}\in A^{2}(\mathbb{G}(1,3)),$$
where the normal class $c_\nu(\mathcal S)$ 
of $\mathcal Z$ is equal to 
$d_{\mathcal Z}.\deg(\mathcal P_{A,\mathcal Z}) $ (for a generic $A\in \mathbb P^n$) 
minus the sum of the intersection multiplicities of $\mathcal S$
with its generic normal polars ${\mathcal P}_{A,\mathcal S}$ at points of $\mathcal B_{\mathcal S}$.
\end{thm}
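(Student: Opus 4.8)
The plan is to determine the two coefficients in the expansion $\mathfrak{n}_{\mathcal{S}}=a\,\sigma_2+b\,\sigma_{1,1}\in A^2(\mathbb{G}(1,3))$ by pairing with the self-dual basis, and then to evaluate $c_\nu(\mathcal S)$ directly through the normal polar. Recall that in $A^*(\mathbb{G}(1,3))$ one has $\sigma_2^2=\sigma_{1,1}^2=\sigma_{2,2}$ (the class of a point) and $\sigma_2\cdot\sigma_{1,1}=0$, so that $\{\sigma_2,\sigma_{1,1}\}$ is orthonormal for the intersection pairing $A^2\times A^2\to A^4\simeq\mathbb{Z}$. Consequently $a=\mathfrak{n}_{\mathcal S}\cdot\sigma_2$ and $b=\mathfrak{n}_{\mathcal S}\cdot\sigma_{1,1}$. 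Since $\sigma_2$ is the class of the lines through a generic point $P$, the number $a$ counts the normal lines of $\mathcal S$ through $P$; granted that $\alpha|_{\mathcal S}$ is generically one-to-one onto $\mathfrak{N}_{\mathcal S}$, a generic normal line is normal at a single point of $\mathcal S$, and this count is exactly $c_\nu(\mathcal S)$ by definition of the normal class, whence $a=c_\nu(\mathcal S)$.

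For $b$, the class $\sigma_{1,1}$ is that of the lines contained in a generic plane $H=V(\ell)$. A normal line $\mathcal N_m(\mathcal S)$ lies in $H$ if and only if its two determining points lie in $H$: the point $m$ itself, and the point at infinity $[\partial_x F:\partial_y F:\partial_z F:0]$ carrying the normal direction. The first condition confines $m$ to the plane curve $\mathcal S\cap H$ of degree $d_{\mathcal S}$, while the second is a single equation of degree $d_{\mathcal S}-1$ cutting out a polar-type surface $\mathcal P_H$. Applying Bézout in $\mathbb{P}^3$ to $\mathcal S$, $H$ and $\mathcal P_H$ (of respective degrees $d_{\mathcal S}$, $1$ and $d_{\mathcal S}-1$), and using that for generic $H$ these meet transversally away from $\mathcal B$, one gets $b=d_{\mathcal S}(d_{\mathcal S}-1)$. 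This establishes the announced expansion of $\mathfrak{n}_{\mathcal S}$.

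It remains to compute $c_\nu(\mathcal S)$. By construction of the normal polar $\mathcal P_P$ associated with a generic $P$, a point $m\in\mathcal S\setminus\mathcal B$ satisfies $P\in\mathcal N_m(\mathcal S)$ if and only if $m\in\mathcal S\cap\mathcal P_P$; hence $c_\nu(\mathcal S)$ is the number of points of $\mathcal S\cap\mathcal P_P$ lying outside $\mathcal B_{\mathcal S}$. Since $\mathcal P_P$ has dimension $1$ and degree $d_{\mathcal S}^2-d_{\mathcal S}+1$ for generic $P$ (as stated above for $\dim\mathcal B\le 1$), Bézout gives the total intersection number $\deg(\mathcal S)\cdot\deg(\mathcal P_P)=d_{\mathcal S}(d_{\mathcal S}^2-d_{\mathcal S}+1)$. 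The locus $\mathcal B$ is contained in $\mathcal P_P$ by the very definition of the normal polar, so every point of $\mathcal B_{\mathcal S}$ lies on $\mathcal S\cap\mathcal P_P$; splitting the intersection cycle into its part supported on $\mathcal B_{\mathcal S}$ and its complementary part yields
\[
d_{\mathcal S}(d_{\mathcal S}^2-d_{\mathcal S}+1)=c_\nu(\mathcal S)+\sum_{m\in\mathcal B_{\mathcal S}}i_m(\mathcal S,\mathcal P_P),
\]
which is the asserted formula, once one knows that the points outside $\mathcal B_{\mathcal S}$ each contribute with multiplicity one.

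The main obstacle is precisely this last genericity statement: one must show that for a generic $P$ the intersection $\mathcal S\cap\mathcal P_P$ is transverse at every point outside $\mathcal B_{\mathcal S}$, so that the honest normal lines through $P$ are counted simply and match the point count defining $c_\nu(\mathcal S)$; the same transversality underlies the identification $a=c_\nu(\mathcal S)$ via the generic injectivity of $\alpha|_{\mathcal S}$. I would obtain it by a Bertini-type argument applied to the linear system cut on $\mathcal S$ by the Schubert condition ``$P\in\mathcal N_m(\mathcal S)$'' as $P$ varies, reducing transversality to the surjectivity of the differential of $\alpha|_{\mathcal S}$ at a generic $m$, that is to the generic maximality of the rank of the normal map. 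The hypotheses on the contact of $\mathcal S$ and $\mathcal S_\infty$ with $\mathcal H_\infty$ and $\mathcal C_\infty$ are what guarantee that the degenerate behaviour of $\alpha$ is confined to the base locus $\mathcal B$, so that no excess multiplicity leaks outside $\mathcal B_{\mathcal S}$.
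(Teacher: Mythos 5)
Your proposal is correct and follows essentially the same route as the paper: the coefficients $a,b$ are extracted exactly as in the paper's Proposition 14 (pairing with the self-dual basis $\sigma_2,\sigma_{1,1}$, with $b=d_{\mathcal S}(d_{\mathcal S}-1)$ from B\'ezout on $F=0$, $a_1F_x+a_2F_y+a_3F_z=0$, $m\in\mathcal H$), and $c_\nu(\mathcal S)$ comes from B\'ezout against the degree-$(d_{\mathcal S}^2-d_{\mathcal S}+1)$ normal polar with the intersection cycle split along $\mathcal B_{\mathcal S}$. The transversality point you flag as the main obstacle is resolved in the paper just as you suggest, by working in $\mathbb P^5$: for generic $A$ the plane $\bigcap_{i=1}^4\mathcal H_{A,i}$ meets $\overline{\alpha(\mathcal S)}$ transversally (with the degenerate case $\dim\overline{\alpha(\mathcal S)}<2$ treated separately, where both sides of the multiplicity-one identity vanish and $c_\nu(\mathcal S)=0$), forcing $i_P(\mathcal S,\mathcal P_{A,\mathcal S})=1$ at every point of $(\mathcal S\cap\mathcal P_{A,\mathcal S})\setminus\mathcal B$.
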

\begin{coro}[n=3]
For a generic irreducible surface $\mathcal S\subset \mathbb P^3$ of degree $d\ge 2$, we have
$c_\nu(\mathcal S)=d^3-d^2+d$ and
\[
{\mathfrak n}_{\mathcal{S}}=(d^3-d^2+d).\sigma_2+d(d-1).\sigma _{1,1}\in A^{2}(\mathbb{G}(1,3)).
\]
\end{coro}
In the next statement, we consider smooth surfaces $\mathcal S$ of $\mathbb P^3$
($\mathbb P^3$ being endowed with projective coordinates $[x:y:z:t]$)
such that $\mathcal S_\infty$
has no worse singularities than ordinary multiple points and 
ordinary cusps.
\begin{thm}[n=3]\label{thmsurfaces}
Let $\mathcal S\subset\mathbb P^3$ be a smooth irreducible
surface of degree $d_{\mathcal S}\ge 2$ such that:
\begin{itemize}
\item[(i)] in $\mathcal H^\infty$, the curve $\mathcal S_\infty$ has a finite number
of points of tangency with $\mathcal U_\infty$,
\item[(ii)]  any singular point of $\mathcal S_\infty$ is either an 
ordinary multiple point or an ordinary cusp,
\item[(iii)] at any (non singular) point of tangency of $\mathcal S_\infty$ 
with $\mathcal U_\infty$, the contact is ordinary,
\item[(iv)] at any singular point of $\mathcal S_\infty$
contained in $\mathcal U_\infty$, the tangent line to $\mathcal U_\infty$ is not contained
in the tangent cone to $\mathcal S_\infty$.
\end{itemize}
Then 
\[
\mathfrak{n}_{\mathcal{S}}=c_\nu(\mathcal S).\sigma_2+d_{\mathcal S}(d_{\mathcal S}-1).\sigma _{1,1}\in A^{2}(\mathbb{G}(1,3))
\]
and 
the normal class of $\mathcal S$ is
$$c_\nu(\mathcal S)=d_{\mathcal S}^3-d_{\mathcal S}^2+d_{\mathcal S}-\sum_{k\ge 2}((k-1)^2m_\infty^{*(k)}+
   k(k-1)\tilde m_\infty^{(k)}) -2\kappa_\infty^* - 3\tilde \kappa_\infty-c_\infty,$$
where
\begin{itemize}
\item $m_\infty^{*(k)}$ (resp. $\tilde m_\infty^{(k)}$) is the number of ordinary multiple points of
order $k$ of $\mathcal S_\infty$ outside (resp. contained in) $\mathcal U_\infty$,
\item $\kappa_\infty^*$ (resp. $\tilde \kappa_\infty$) 
is the number of ordinary cusps of $\mathcal S_\infty$ outside (resp. contained in) $\mathcal U_\infty$,
\item $c_\infty$ is the number of ordinary (non singular) points of tangency of $\mathcal S_\infty$ with $\mathcal U_\infty$.
\end{itemize}
\end{thm}
\begin{exa}[n=3]
The surface $\mathcal S=V(xzt-tx^2-zt^2-xz^2+y^3)\subset \mathbb P^3$
is smooth, its only point of tangency with $\mathcal H_\infty=V(t)$ is
$P[1:0:0:0]$ which is an ordinary cusp of $\mathcal S_\infty=V(t,-xz^2+y^3)$. Moreover $\mathcal S_\infty$ has no point of tangency with
$\mathcal U_\infty$. Hence the normal class of $\mathcal S$ is
$27-9+3-2=19$.
\end{exa}
Theorem \ref{thmhypersurface} (resp. \ref{thmsurfaces}) is a consequence of Theorem \ref{formulegeneralehypersurface} (resp. \ref{formulegeneralesurface}).
In a more general setting, when $n=3$, we can replace $\alpha_{\mathcal S}$ in $\tilde\alpha_{\mathcal S}=\frac{\alpha_{\mathcal S}}H$
(for some homogeneous polynomial $H$ of degree $d_H$) so that the
set $\tilde{\mathcal B}^{(0)}_{\mathcal S}$ of base points of $\tilde\alpha_{\mathcal S}$
has dimension at most 1. In this case, we consider a notion of
normal polars associated to $\tilde\alpha_{\mathcal S}$ which have generically  
dimension 1 and degree $\tilde d_{\mathcal S}^2
-\tilde d_{\mathcal S}+1$
(with $\tilde d_{\mathcal S}=d_{\mathcal S}-d_H$).
\begin{thm}[n=3]\label{factorisable}
Let $\mathcal S$ be an irreducible surface of $\mathbb P^3$.
If the set $\tilde{\mathcal B}^{(0)}_{\mathcal S}\cap\mathcal S$ is finite, 
then 
$\mathfrak{n}_{\mathcal{S}}=c_\nu(\mathcal S).\sigma_2+d_{\mathcal S}(\tilde d_{\mathcal S}-1)
.\sigma _{1,1}\in A^{2}(\mathbb{G}(1,3))$ and
 the normal class $c_\nu(\mathcal S)$ 
of $\mathcal S$ is equal to 
$d_{\mathcal S}(\tilde d_{\mathcal S}^2-\tilde d_{\mathcal S}+1) $ 
minus the intersection multiplicity of $\mathcal S$
with its generic normal polars  $\tilde{\mathcal P}_{A,\mathcal S}$  at points $m\in\tilde{\mathcal B}^{(0)}_{\mathcal S}\cap \mathcal S$.
\end{thm}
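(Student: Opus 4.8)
The plan is to rerun the argument that proves Theorem~\ref{formulegeneralesurface}, replacing the map $\alpha$ by $\tilde\alpha=\alpha/H$ and the degree $d_{\mathcal S}$ by $\tilde d_{\mathcal S}=d_{\mathcal S}-d_H$ wherever the normal map enters. The essential observation is that $\tilde\alpha$ and $\alpha$ define the \emph{same} rational map $m\mapsto\mathcal N_m(\mathcal S)$ into $\mathbb G(1,3)$, so that dividing by $H$ changes neither the variety $\mathfrak N_{\mathcal S}$, nor its class $\mathfrak n_{\mathcal S}$, nor the integer $c_\nu(\mathcal S)$; only the indeterminacy locus shrinks, from the possibly positive-dimensional $\mathcal B\cap\mathcal S$ to the finite set $\tilde{\mathcal B}\cap\mathcal S$, and the degree bookkeeping changes. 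I would first record that a generic normal line is normal to $\mathcal S$ at a single point, so that $\tilde\alpha|_{\mathcal S}$ is birational onto $\mathfrak N_{\mathcal S}$; this ensures that the intersection numbers computed below equal the honest geometric counts.

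Next I would write $\mathfrak n_{\mathcal S}=a\,\sigma_2+b\,\sigma_{1,1}$ in $A^2(\mathbb G(1,3))$ and recover the coefficients from the orthonormality of this basis for the intersection pairing ($\sigma_2^2=\sigma_{1,1}^2=1$ and $\sigma_2\cdot\sigma_{1,1}=0$), namely $a=\mathfrak n_{\mathcal S}\cdot\sigma_2$ and $b=\mathfrak n_{\mathcal S}\cdot\sigma_{1,1}$. As $\sigma_2$ is the class of lines through a generic point $m_1$, the number $a$ counts, for generic $m_1$, the normal lines through $m_1$, so $a=c_\nu(\mathcal S)$ by the birationality above; as $\sigma_{1,1}$ is the class of lines lying in a generic plane $\Pi$, the number $b$ counts the $m\in\mathcal S$ with $\mathcal N_m(\mathcal S)\subset\Pi$.

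The central computation is the evaluation of $b$. Since $\mathcal N_m(\mathcal S)$ is spanned by $m$ and its point at infinity, which in suitable affine coordinates is the gradient direction $[0:F_{X_1}:F_{X_2}:F_{X_3}]$, and since these coordinates are divisible by $H$ (as $H$ divides the Pl\"ucker coordinates $p_{0i}=X_0F_{X_i}$ and is prime to $X_0$) and reduce to $\tilde F_i=F_{X_i}/H$ of degree $\tilde d_{\mathcal S}-1$, the inclusion $\mathcal N_m(\mathcal S)\subset\Pi$ for $\Pi=V(\sum_i a_iX_i)$ forces both $m\in\mathcal S\cap\Pi$ and $\sum_{i=1}^{3}a_i\tilde F_i(m)=0$. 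Hence $b$ equals the number of points, inside $\Pi\cong\mathbb P^2$, of the intersection of the degree-$d_{\mathcal S}$ curve $\mathcal S\cap\Pi$ with the degree-$(\tilde d_{\mathcal S}-1)$ curve cut out by $\sum_i a_i\tilde F_i$. For a generic $\Pi$ this section is smooth, avoids the finite set $\tilde{\mathcal B}\cap\mathcal S$, and meets the second curve transversally in distinct points, each giving a genuine normal line contained in $\Pi$; B\'ezout's theorem in $\Pi$ then gives $b=d_{\mathcal S}(\tilde d_{\mathcal S}-1)$, which is the asserted form of $\mathfrak n_{\mathcal S}$.

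Finally, to obtain $c_\nu(\mathcal S)=a$, I would intersect $\mathcal S$ with a generic normal polar $\tilde\Gamma_P$ attached to $\tilde\alpha$, which has dimension $1$ and degree $\tilde d_{\mathcal S}^2-\tilde d_{\mathcal S}+1$. For generic $P$ the curve $\tilde\Gamma_P$ is not contained in $\mathcal S$, so the two meet properly and B\'ezout gives the total intersection number $d_{\mathcal S}(\tilde d_{\mathcal S}^2-\tilde d_{\mathcal S}+1)$. These intersection points split into the transverse points $m\notin\tilde{\mathcal B}$ with $P\in\mathcal N_m(\mathcal S)$, of which there are exactly $c_\nu(\mathcal S)$, and the points of $\tilde{\mathcal B}\cap\mathcal S$, which absorb the remaining multiplicity; subtracting the latter yields the stated formula. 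I expect the real difficulty to lie not in these B\'ezout counts but in the surrounding genericity and transversality claims---the birationality of $\tilde\alpha|_{\mathcal S}$, the properness of $\mathcal S\cap\tilde\Gamma_P$, and the simplicity of the plane-section count---and in checking that the hypothesis ``$\tilde{\mathcal B}\cap\mathcal S$ finite'' is exactly what confines all the excess intersection to the base points.
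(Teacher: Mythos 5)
Your overall architecture (compute the two Schubert coefficients, then run the polar/B\'ezout count of Theorem \ref{formulegeneralesurface} with $\alpha$ replaced by $\tilde\alpha$ and $d_{\mathcal S}$ by $\tilde d_{\mathcal S}$) is exactly the paper's, which proves this statement via the second case of Proposition \ref{PROP1} together with Proposition \ref{rqebasepoints}. But your central computation of $b=\mathfrak n_{\mathcal S}\cdot\sigma_{1,1}$ has a genuine gap: you assert that $H$ is prime to the equation of the hyperplane at infinity (your $X_0$, the paper's $t$), so that $H$ divides each partial $F_{X_i}$ and $\tilde F_i:=F_{X_i}/H$ is a polynomial of degree $\tilde d_{\mathcal S}-1$. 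This is unjustified and, worse, false in precisely the situations this theorem exists to handle: for spheres and cones the paper shows (Example \ref{exemple1}) that $H=t$, while $t$ does not divide $F_x=2(x-x_0t)$, etc. What is true is only that $H$ divides $tF_x$, $tF_y$, $tF_z$ and the three moment coordinates $xF_y-yF_x,\dots$; hence the dichotomy in the paper's proof of Proposition \ref{PROP1}: either $H\mid F_x,F_y,F_z$ (where your B\'ezout count in the plane $\Pi$ is correct and gives $b=d_{\mathcal S}(\tilde d_{\mathcal S}-1)$ directly), or $H=tH_1$ with only $H_1$ dividing the partials. In the second case your ``degree $\tilde d_{\mathcal S}-1$ curve'' does not exist: the affine condition is $\sum_i a_iF_{X_i}/H_1=0$ of degree $\tilde d_{\mathcal S}$, the points of $\mathcal S_\infty\cap\mathcal H$ contribute $d_{\mathcal S}$ further solutions, and one must compute the excess intersection multiplicity (equal to $2$ for generic $\mathcal H$) at each of the $d_{\mathcal S}$ points of $\mathcal S\cap\mathcal H^\infty\cap\mathcal H$; only after subtracting $2d_{\mathcal S}$ does one recover $b=d_{\mathcal S}(\tilde d_{\mathcal S}-1)$. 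Omitting this case leaves the theorem unproved exactly for the quadrics of Section \ref{secquadric} (sphere, cone of revolution) to which it is applied.

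A secondary problem: the birationality you ``record'' at the outset --- that a generic normal line is normal to $\mathcal S$ at a single point --- is false for the sphere, where antipodal points share their normal line, so it cannot serve as a blanket lemma under the hypotheses of this theorem; if $\mathfrak N_{\mathcal S}$ is taken as a reduced variety, its class for the sphere is $\sigma_2$, not $2\sigma_2$, so the identification $a=c_\nu(\mathcal S)$ requires either counting the image cycle with the degree of the normal map or arguing separately in the non-birational cases. (The paper passes over this identification silently rather than asserting the false general statement, so this is a defect shared with, not resolved by, the source; but your proof leans on it explicitly.) Your final paragraph --- degree $\tilde d_{\mathcal S}^2-\tilde d_{\mathcal S}+1$ of the reduced polar from Proposition \ref{rqebasepoints}, proper intersection with $\mathcal S$, transversality with multiplicity one away from $\tilde{\mathcal B}$, and subtraction of the multiplicities at $\tilde{\mathcal B}\cap\mathcal S$ --- does match the paper's argument for the normal-class formula, modulo the degenerate case $\dim\overline{\tilde\alpha(\mathcal S)}<2$, which the paper treats and you do not.
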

When the surface is a "cylinder" or a surface of revolution,  its normal class is equal to
the normal class of its plane base curve. The normal class of any plane curve
is given by the simple formula of Theorem \ref{thmcurves} below, that we give for completness.
Let us recall that, when $\mathcal C=V(F)$ is an irreducible curve of $\mathbb P^2$, 
the evolute of $\mathcal C$ is the curve tangent to the family of normal lines to $\mathcal Z$
and that the evolute of a line or a circle is reduced to a single point.
Hence, except for lines and circles, the normal class of $\mathcal C$ is simply the class (with multiplicity) 
of its evolute.
The following result generalizes the result by Salmon \cite[p. 137]{Salmon-Cayley} proved in the case
of Pl\"ucker curves (plane curves with no worse multiple tangents than ordinary double tangents, no singularities other than
ordinary nodes and cusps) to any plane curve (with any type of singularities).
We write $\ell_\infty$ for the line at infinity of $\mathbb P^2$.
We define the two cyclic points $I[1:i:0]$ and $J[1:-i:0]$ in $\mathbb P^2$ (when $n=2$, $\mathcal U_\infty=\{I,J\}$).
\begin{thm}[n=2]\label{thmcurves}
Let $\mathcal C=V(F)$ be an irreducible curve of $\mathbb P^2$ of degree $d\ge 2$ with class $d^\vee$. Then
its normal class is $$c_\nu(\mathcal C)=d+d^\vee-\Omega(\mathcal C,\ell_\infty)-\mu_{I}(\mathcal C)-\mu_{J}(\mathcal C),$$
where $\Omega$ denotes the sum of the contact numbers between two curves
and where $\mu_P(\mathcal C)$ is the multiplicity of $P$ on $\mathcal C$.
\end{thm}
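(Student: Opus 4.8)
The plan is to reduce the surface case ($n=3$) machinery to the planar setting and express the normal class of $\mathcal C$ as an intersection number. Since the evolute of $\mathcal C$ is the curve tangent to the family of normal lines, and (away from lines and circles) $c_\nu(\mathcal C)$ equals the class of that evolute, I would work with the normal map $\alpha:\mathbb P^2\setminus\mathcal B\to\mathbb P^2{}^\vee$ sending $m$ to the normal line $\mathcal N_m(\mathcal C)$. The normal class is then the number of $m\in\mathcal C$ whose normal line passes through a generic point $P$, i.e. the degree of the \emph{normal polar} of $\mathcal C$ with respect to $P$ intersected with $\mathcal C$, minus the contributions supported on the base locus $\mathcal B_{\mathcal C}$.

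First I would set up the explicit equation of the normal line. In affine coordinates the normal line at a smooth point $m=(a,b)$ is directed by $\nabla F(m)$, so it is the line through $m$ in the direction $(F_x,F_y)$; the condition that it pass through a generic $P$ gives a polynomial equation $G_P(x,y)=0$ of degree $d$ (the normal polar). By B\'ezout, $\mathcal C\cap V(G_P)$ has $d\cdot d = d^2$ points counted with multiplicity, but several of these are forced, base-point contributions that must be subtracted. The heart of the argument is then to identify exactly which points of $\mathcal C$ lie in the base locus $\mathcal B_{\mathcal C}$ and to compute the local intersection multiplicity of $\mathcal C$ and $V(G_P)$ at each such point. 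The expected contributions are: the two cyclic points $I$ and $J$ (through which \emph{every} normal line passes, since normality is defined via the umbilical structure), and the contact points of $\mathcal C$ with $\ell_\infty$.

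Second, I would compute each local contribution via Puiseux expansions. At $I$ and $J$, because all normal directions are isotropic with respect to the cyclic points, these points lie on every normal line; the local intersection multiplicity of $\mathcal C$ with $V(G_P)$ at $I$ (resp. $J$) should work out to account for the $\mu_I(\mathcal C)$ (resp. $\mu_J(\mathcal C)$) term together with part of the class. The quantity $d^\vee$ (the class of $\mathcal C$, i.e. the degree of its dual) enters because the number of tangent lines from $P$ interacts with the normal construction: normals are obtained from tangents by the fixed correlation $m\mapsto$ (perpendicular direction), so the polar degree $d^2$ reorganizes as $d+d^\vee$ after accounting for the line at infinity. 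The term $\Omega(\mathcal C,\ell_\infty)$ collects the excess intersection of $\mathcal C$ with $\ell_\infty$ at genuine contact points, which inflate the base-locus multiplicity beyond the generic count.

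The main obstacle will be the precise local computation at the cyclic points $I,J$ and at the points where $\mathcal C$ meets $\ell_\infty$, especially when these coincide or when $\mathcal C$ has singularities there: I must compute the intersection multiplicity $\mathrm{mult}_P(\mathcal C, V(G_P))$ for a \emph{generic} $P$ at each such point and show it equals exactly $\mu_P(\mathcal C)$ (at $I,J$) or the appropriate contact number (along $\ell_\infty$). The robustness of the formula across arbitrary singularity types — which is the whole point of generalizing Salmon's Pl\"ucker-curve result — rests on showing these local multiplicities depend only on the stated local invariants and not on the finer singularity structure. I would carry this out by parametrizing each local branch by Puiseux series, substituting into $G_P$, and reading off the order of vanishing; the genericity of $P$ guarantees no unexpected cancellation and lets me discard lower-order terms cleanly, so that summing the local contributions and subtracting from $d^2$ yields $d+d^\vee-\Omega(\mathcal C,\ell_\infty)-\mu_I(\mathcal C)-\mu_J(\mathcal C)$.
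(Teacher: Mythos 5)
Your outer skeleton does match the paper's: define the normal polar of degree $d$, apply B\'ezout to get $d^2$, and subtract local contributions at base points computed via Puiseux expansions. But there are two genuine gaps in the middle. First, your inventory of the base locus is incomplete: the base points of $(N_{\mathcal C})|_{\mathcal C}$ are the singular points of $\mathcal C$ \emph{wherever they sit in the plane}, together with the contact points with $\ell_\infty$ and the points of $\{I,J\}\cap\mathcal C$; you list only the last two. At any singular point $P$ the generic normal polar meets $\mathcal C$ with multiplicity at least $V_P=\sum_{j}\sum_{j'\neq j}\val(g_{j'}-g_j)$ (e.g.\ $V_P=2$ at an ordinary node), so an affine node already breaks your count. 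Second --- and this is the step that would actually fail --- you have no mechanism producing $d^\vee$. The suggestion that the polar degree $d^2$ ``reorganizes as $d+d^\vee$ after accounting for the line at infinity'' is not correct: in the paper the class enters through the factorization of the gradient along each probranch, $\Delta G(x,g_j(x),1)=R_j(x)W_j(x)$ with $R_j(x)=U(x,g_j(x))\prod_{j'\neq j}(g_{j'}(x)-g_j(x))$, which splits each local number as $i_P=V_P+h_P$, combined with the global Pl\"ucker-type identity $\sum_P V_P=d(d-1)-d^\vee$ (Corollary 31 of \cite{fredsoaz1}). That sum runs over \emph{all} base points, in particular all singular points of $\mathcal C$, not over $\mathcal C\cap\ell_\infty$; only after this global subtraction does $d^2-\sum_P V_P=d+d^\vee$, leaving the residual terms $h_P$ to be identified with $\Omega(\mathcal C,\ell_\infty)+\mu_I+\mu_J$.

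Relatedly, your local claims at the cyclic points are false as stated. It is not true that every normal line passes through $I$ and $J$ (what is true is that $I,J\in\mathcal C$ are base points of the restriction of $N_{\mathcal C}$ to $\mathcal C$, by the Euler relation); and the full local multiplicity at $I$ is not $\mu_I(\mathcal C)$ in general but $V_I+h_I$, where the paper's case analysis gives $h_I^{(j)}=1$ for a probranch transverse to $\ell_\infty$ and $h_I^{(j)}=i_P^{(j)}$ for a probranch tangent to $\ell_\infty$, the excess $i_P^{(j)}-1$ being absorbed into $\Omega(\mathcal C,\ell_\infty)$ rather than into $\mu_I$. Since the whole point of the theorem is validity for arbitrary singularities --- including a singular cyclic point or branches tangent to $\ell_\infty$ at $I$ --- asserting that the multiplicity there ``equals exactly $\mu_P$'' assumes away precisely the hard cases. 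Without the $V_P$/$h_P$ decomposition and the identity $\sum_P V_P=d(d-1)-d^\vee$, your plan cannot close, so the proposal as written has a genuine gap even though its overall architecture is the right one.
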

In \cite{Fantechi}, Fantechi proved that the evolute map is birational from $\mathcal C$ to
its evolute curve unless if\footnote{We write $[x:y:z]$ for the coordinates of
$m\in\mathbb P^2$ and $F_x,F_y,F_z$ for the partial derivatives of $F$.} 
$F_x^2+F_y^2$ is a square modulo $F$ and that in this latest
case the evolute map is $2:1$ (if $\mathcal C$ is neither a line nor a circle).
Therefore, the normal class $c_\nu(\mathcal C)$ of a plane curve $\mathcal C$ corresponds to the class of its evolute
unless $F_x^2+F_y^2$ is a square modulo $F$ and in this last case, the normal class $c_\nu(\mathcal C)$ of $\mathcal C$
corresponds to the class of its evolute times 2 (if $\mathcal C$ is neither a line nor a circle).

The notion of focal loci generalizes the notion of evolute to higher dimension \cite{Trifogli,CataneseTrifogli}.
The normal lines of an hypersurface $\mathcal Z$ are tangent to the focal loci hypersurface of $\mathcal Z$ but of course the normal class of $\mathcal Z$
does not correspond anymore (in general) to the class of its focal loci (the normal lines to $\mathcal Z$ are contained
in but are not equal to the tangent hyperplanes of its focal loci).

In Section \ref{SEC00}, we introduce normal lines, normal class, normal polars in $\mathbb P^n$ (see also Appendix \ref{NORMAL} for the link between projective orthogonality and affine orthogonality).
In Section \ref{SECpolar}, we study normal polars and prove Theorems \ref{formulegeneralehypersurface} and \ref{thmhypersurface}. 
In Section \ref{incidenceschubert}, we introduce the orthogonal incidence variety $\mathcal I^\perp$ in $\mathbb G(1,n)$, give some recalls on the Schubert classes in the Chow ring of $\mathbb G(1,3)$
and prove Theorems \ref{formulegeneralesurface}
and \ref{factorisable}.
In Section \ref{sec:proofthm1}, we prove Theorem \ref{thmsurfaces}. 
In Section \ref{secquadric}, we apply our results on examples in $\mathbb P^3$: we compute the normal class 
of every quadric and of a cubic 
surface with singularity $E_6$.
In Section \ref{proofcurve}, we prove Theorem \ref{thmcurves}.
Appendix \ref{cylindreetrevolution} on the normal class of "cylinders" and of surfaces of revolution in $\mathbb P^n$.
\section{Normal lines, normal class and normal polars}\label{SEC00}
\subsection{Definitions and notations}\label{DEFI0}
Let $\mathbf V$ be a $\mathbb C$-vector space of dimension $n+1$.
Given $\mathcal Z=V(F)\ne\mathcal H^\infty$ an irreducible hypersurface of 
$\mathbb P^n=\mathbb P(\mathbf V)$ (with
$F\in Sym(\mathbf{V}^\vee)\cong\mathbb C[x_1,...,x_{n+1}]$), we consider
the rational map $n_{\mathcal Z}:\mathbb P^n\dashrightarrow\mathcal H^\infty$
given by $n_{\mathcal Z}=[F_{x_1}:\cdots :F_{x_n}:0]$.
Note that, for nonsingular $m\in \mathcal Z$ such that 
the tangent hyperplane $\mathcal T_m\mathcal Z$ to $\mathcal Z$ at $m$
is not $\mathcal H^\infty$, $n_{\mathcal Z}(m)$ is the pole of the $(n-2)$-variety at infinity  $\mathcal T_m\mathcal Z\cap\mathcal H^\infty\subset\mathcal H^\infty$ 
with respect to the {\bf umbilical}
$\mathcal U_\infty:=V(x_1^2+...+x_n^2)\cap\mathcal H^\infty\subset\mathcal H^\infty$. $\mathcal U_\infty$ corresponds to the set of {\bf  circular points at infinity}.
\begin{defi}
{\bf The projective normal line} $\mathcal N_m\mathcal Z$ to $\mathcal Z$
at $m\in\mathcal Z$ is the line $(m\, n_{\mathcal Z}(m))$ when $n_{\mathcal Z}(m)$ 
is well defined in $\mathbb P^n$ and not equal to $m$.
\end{defi}
\begin{rqe}
This is a generalization of affine normal lines in the euclidean space $E_n$. Indeed, 
if $F$ has real coefficients and if $m\in\mathcal Z\setminus\mathcal H_\infty$ 
has real coordinates $[x^{(0)}_1:\cdots:x^{(0)}_n:1]$, 
then $\mathcal N_m\mathcal Z$ corresponds to the affine normal line of the affine
hypersurface $V(F(x_1,...,x_n,1))\subset E_n$
at the point of coordinates $(x^{(0)}_1,\cdots,x^{(0)}_n)$ (see Section \ref{NORMAL}).
\end{rqe}
The aim of this work is the study of the notion of normal class.
\begin{defi}
Let $\mathcal Z$ be an irreducible hypersurface of $\mathbb P^n$.
{\bf The normal class} of $\mathcal Z$ is the number $c_\nu(\mathcal Z)$ of $m\in\mathcal Z$
such that $\mathcal N_m(\mathcal Z)$ contains $m_1$ for a generic
$m_1\in\mathbb P^n$.
\end{defi}
Let $\Delta:=\{(m_1,m_2)\in\mathbb P^n\times\mathbb{P}^{n}\ :\ m_1=m_2\}$
be the diagonal of $\mathbb{P}^{n}\times\mathbb{P}^{n}$.
Recall that the {\bf Pl\"ucker embedding} 
$\left(  \mathbb{P}^{n}\times\mathbb{P}^{n}\right)  \backslash\Delta 
\overset{Pl}{\hookrightarrow}  \mathbb{P}
(\bigwedge^{2}\mathbf V)\cong \mathbb{P}^{\frac{n(n+1)}2-1}$ is defined by
$$Pl(u,v)=\bigwedge^2({u}, {v})=\left[p_{i,j}=u_iv_j-u_jv_i\right]
    _{1\le i<j\le n+1}\in\mathbb P^{\frac{n(n+1)}2-1},$$
with $p_{i,j}=-p_{j,i}$ the $(i,j)$-th Pl\"ucker coordinate, identifying $\mathbb P^{\frac{n(n+1)}2-1}$ with the projective space of $n\times n$ antisymmetric matrices.
Its image is the Grassmannian $\mathbb G(1,n)$ (see \cite{Eisenbud-Harris}) given by
$$\mathbb G(1,n):=Pl((\mathbb P^n)^2\setminus \Delta)=\bigcap_{(i,j_1,j_2,j_3)\in\mathcal I}V(B_{i,j_1,j_2,j_3}) \subset \mathbb P^{\frac{n(n+1)}2-1},$$
where $B_{i,j_1,j_2,j_3}:=p_{i,j_1}p_{j_2,j_3}-p_{i,j_2}p_{j_1,j_3}+p_{i,j_3}p_{j_1,j_2}$ and where $\mathcal I$ is the set of $(i,j_1,j_2,j_3)\in\{1,...,n+1\}$ such that
$j_1<j_2<j_3$ and $j_1,j_2,j_3\ne i$. We recall also that $\dim \mathbb G(1,n)=2n-2$.
\begin{rqe}
Let $h_{\mathcal Z}:\mathbb{P}^{n}\setminus V(F_{x_1},\cdots,F_{x_n})\rightarrow \mathbb{P}^{n}\times\mathbb{P}^{n}$
be the morphism defined by $j(m)=\left(  m,{n}_{\mathcal Z}(m)\right)  $.
The variety $\mathfrak N_{\mathcal Z}\subset\mathbb G(1,n)$ of projective normal lines to $\mathcal Z$ is the (Zariski closure of the) image of $\mathcal Z$
by the regular map $\alpha_{\mathcal Z}:=Pl\circ h_{\mathcal Z}:\mathbb P^n\setminus\mathcal B^{(0)}_{\mathcal Z}\rightarrow \mathbb{P}^{\frac{n(n+1)}2-1}$, with $\mathcal B^{(0)}_{\mathcal Z}:=V(F_{x_1},...,F_{x_n})\cup j^{-1}(\Delta)$, i.e.
$$\mathcal B^{(0)}_{\mathcal Z}:=\left\{  m\in\mathbb{P}^n;\ \bigwedge^2(\mathbf {m}, \mathbf n_{\mathcal Z}(\mathbf m))=\mathbf 0\ \mbox{in}\ \bigwedge^2\mathbf V\right\}.$$
\end{rqe}
Note that the number of normal lines to $\mathcal Z$ passing through 
$A\in\mathbb P^n$
corresponds to the number of $m\in\mathcal Z\setminus\mathcal B_{\mathcal Z}$ satisfying the following
sets of equations~:
\begin{equation}\label{EQUA000}
\bigwedge^3[\mathbf {m}\ \mathbf n_{\mathcal Z}(\mathbf m)\ \mathbf A] =\mathbf 0\quad\mbox{in}\ \bigwedge^3 \mathbf V.
\end{equation} 
\begin{defi}
For any $A\in\mathbb P^n$, 
the set of points $m\in \mathbb P^n$ satisfying \eqref{EQUA000} is called
{\bf normal polar} $\mathcal P_{A,\mathcal Z}$ of $\mathcal Z$ with respect to $A$
\end{defi}
\subsection{Projective similitudes}
Recall that, for every field  $\mathbb{\Bbbk }$,
$$GO(n,\mathbb{\Bbbk }
)=\left\{ A\in GL(n,\mathbb{\Bbbk });\exists \lambda \in \mathbb{\Bbbk }
^{\ast },A\cdot^{t}A=\lambda \cdot I_{n}\right\} $$
is the \textbf{orthogonal similitude group} (for the standard products) and that 
$GOAff(n,\mathbb{\Bbbk })=\mathbb{\Bbbk }^{n}\rtimes GO(n,\mathbb{\Bbbk })$
is the \textbf{orthogonal similitude affine group}.
We have a natural monomorphism of groups
$\kappa :Aff(n,\mathbb{R})=\mathbb{R}^{n}\rtimes GL(n,\mathbb{R}
)\longrightarrow GL(n+1,\mathbb{R})$
given by
\begin{equation}\label{similitude}
\kappa (b,A)=\left( 
\begin{array}{ccccccc}
a_{11} & ... &  &  & ... & a_{1n} & b_{1} \\ 
a_{21} & ... &  &  & ... & a_{2n} & b_{2} \\ 
&  &  &  &  &  &  \\ 
a_{n1} & .. &  &  & ... & a_{nn} & b_{n} \\ 
0 & ... &  &  & 0 & 0 & 1
\end{array}
\right) 
\end{equation}
and,
by restriction, 
$\kappa |_{GOAff(n,\mathbb{R})}:GOAff(n,\mathbb{R})=\mathbb{R}^{n}\rtimes
GO(n,\mathbb{R})\longrightarrow GL(n+1,\mathbb{R})$. Analogously we  have a natural monomorphism of groups 
$\kappa ^{\prime }:=(\kappa\otimes 1) |_{GOAff(n,\mathbb{C})}:GOAff(n,\mathbb{C})=
\mathbb{C}^{n}\rtimes GO(n,\mathbb{C})\longrightarrow GL(n+1,\mathbb{C})$. 
Composing with the canonical projection $\pi :GL(n+1,\mathbb{C}
)\longrightarrow \mathbb{P}(GL(n+1,\mathbb{C}))$ we obtain the \textbf{
projective  complex  similitude Group}:
\begin{equation*}
\widehat{Sim_{\mathbb{C}}(n)}:=(\pi \circ \kappa ^{\prime })(G OAff(n,\mathbb{
C})).
\end{equation*}
which acts naturally on $\mathbb{P}^{n}$.
\begin{defi}
An element of $\mathbb P(Gl(\mathbf V))$ corresponding
to an element of $\widehat{Sim_{\mathbb{C}}(n)}$ with respect to the basis
$(\mathbf e_1,\cdots,\mathbf e_n)$
is called a \textbf{projective similitude of }$\mathbb P^{n}.$
\end{defi}
The set of projective similitudes of $\mathbb P^n$ is isomorphic to $\widehat{Sim_{\mathbb{C}}(n)}$.
\begin{lem}\label{lemmesimilitude}
The projective similitude preserves the orthogonality structure in $\mathbb P^n$. They preserve namely the normal lines
and the normal class of surfaces of $\mathbb P^n$.
\end{lem}
This lemma has a straightforward proof that is omitted.
\section{Proof of Theorem \ref{formulegeneralehypersurface}}\label{SECpolar}
\subsection{Geometric study of $\mathcal B_{\mathcal Z}:=\mathcal B^{(0)}_{\mathcal Z}\cap{\mathcal Z}$}\label{sec:base}
We write $\mathcal Z_\infty:=\mathcal Z\cap\mathcal H^\infty$. Recall that $\mathcal U_\infty:=\mathcal H^\infty\cap V\left(x_1^2+...+x_n^2\right)$.
\begin{prop}
A point of $\mathcal Z$ is in $\mathcal B_\mathcal Z$ if it is a singular point 
of $\mathcal Z$ or a tangential point of $\mathcal Z$ at infinity or a tangential point of $\mathcal Z_\infty$ to the umbilical, i.e.
$\mathcal B_{\mathcal Z}=\sing(\mathcal Z)\cup \mathcal K_\infty(\mathcal Z)\cup
  \Gamma_\infty(\mathcal Z)$, where
\begin{itemize}
\item $\sing(\mathcal Z)$ is the set of singular points of
$\mathcal Z$,
\item $\mathcal K_\infty(\mathcal Z)$ is the set of points of $\mathcal Z$ at which the tangent hyperplane is $\mathcal H^\infty$,
\item $\Gamma_\infty(\mathcal Z)$ is
the set of points of $\mathcal Z_\infty\cap \mathcal U_\infty$ 
at which the tangent space to $\mathcal Z_\infty$ and to $\mathcal U_\infty$ are the same.
\end{itemize}
\end{prop}
\begin{proof}
Let $m\in\mathcal Z$. We have
\begin{eqnarray*}
m\in\mathcal B_{\mathcal Z} &\Leftrightarrow& \bigwedge^2\left(\mathbf m , \mathbf n_{\mathcal Z}(\mathbf m)\right)=0\\
&\Leftrightarrow& \mathbf n_{\mathcal Z}(\mathbf m)=\mathbf 0\ \mbox{or}\ m= n_{\mathcal Z}( m)\\
&\Leftrightarrow& m\in V(F_{x_1},\cdots,F_{x_n})\ \mbox{or}\ m= n_{\mathcal Z}( m).
\end{eqnarray*}
Now $m\in V(F_{x_1},\cdots,F_{x_n})$ means either that $m$ is a singular point of $\mathcal S$ or that
$\mathcal T_m\mathcal Z=\mathcal H^\infty$.

Let $m=[x_1:\cdots:x_{n+1}]\in\mathcal Z$ be such that $m=n_{\mathcal S}( m)$. So $[x_1:\cdots:x_{n+1}]=[F_{x_1}:\cdots: F_{x_n}:0]$. In particular
$x_{n+1}=0$.
Due to the Euler identity, we have
$0=\sum_{i=1}^{n+1}x_iF_{x_i}=\sum_{i=1}^{n+1} x_i^2$. Hence $m\in\mathcal U_\infty$.
Note that the $(n-2)$-dimensional tangent space $\mathcal T_m\mathcal U_\infty$ to $\mathcal U_\infty$ at $m$ has equations 
$X_{n+1}=0$ and $\langle m,\cdot\rangle$
and that the $(n-2)$-dimensional tangent space $\mathcal T_m\mathcal Z_\infty$ to $\mathcal Z_\infty$ at $m$ has equations 
$X_{n+1}=0$ and $\langle(n_{\mathcal S}(m),\cdot\rangle$.
We conclude that $\mathcal T_m\mathcal U_\infty=\mathcal T_m\mathcal Z_\infty$. 

Conversely, if $m=[x_1:\cdots:x_n:0]$ is a nonsingular point of $\mathcal Z_\infty\cap\mathcal U_\infty$ such that
$\mathcal T_m\mathcal U_\infty=\mathcal T_m\mathcal Z_\infty$, then the linear spaces $Span(\mathbf{m},\vec e_{n+1})$
and $Span(\nabla F,\vec e_{n+1})$ are equal which implies that $[x_1:\cdots:x_n:0]=[F_{x_1}:\cdots:F_{x_n}:0]$.
\end{proof}
Recall that the dual
variety of $\mathcal Z_\infty\subset\mathcal H^\infty$ is the variety
$\mathcal Z_\infty^\vee\subset (\mathcal H^\infty)^\vee\cong (\mathbb P^{n-1})^\vee$ of tangent hyperplanes
to $\mathcal Z_\infty$.
It corresponds to the (Zariski closure of the) image of $\mathcal Z_\infty$ by the rational map $n_{\mathcal Z}$. We write $\mathcal Z_\infty^\wedge\subset\mathbb P^n$ for this image. With this notation,
$\mathcal B_{\mathcal Z}=\sing(\mathcal Z)\cup (\mathcal Z_\infty\cap\mathcal Z_\infty^\wedge)$.

\begin{rqe}\label{Basegenerique}
For a generic hypersurface of $\mathbb P^n$, $\mathcal B_{\mathcal Z}=\emptyset$
and so $\dim \mathcal B_{\mathcal Z}^{(0)}\le 0$.
\end{rqe}
But we will also consider cases for which $\#\mathcal B_{\mathcal Z}<\infty$, and so
$\dim\mathcal B^{(0)}_{\mathcal Z}\le 1$.
\begin{exa}[n=3]\label{exemple2}
For the saddle surface $\mathcal S_1=V(xy-zt)$, the set 
$\mathcal B_{\mathcal S_1}$ contains a single point $[0:0:1:0]$
which is a point of tangency at infinity of $\mathcal S_1$.

For the ellipsoid $\mathcal E_1:=V(x^2+2y^2+4z^2-t^2)$, the set $\mathcal B_{\mathcal E_1}$ is empty.

For the ellipsoid $\mathcal E_2:=V(x^2+4y^2+4z^2-t^2)$, the set 
$\mathcal B_{\mathcal E_2}$ has two elements: $[0:1:\pm i:0]$ which 
are points of tangency of $\mathcal E_2$ with $\mathcal U_\infty$.
\end{exa}
\begin{exa}
For the cuartic $\mathcal Z:=V(x_1^2+x_2^2+(x_3+x_5)x_3+(2x_3+x_4)x_4)\subset \mathbb P^4$, $Sing(\mathcal Z)=\emptyset$, $\mathcal K_\infty(\mathcal Z)=\{[0:0:1:-1:0]\}$ and $\Gamma_\infty(\mathcal Z)=\{I_1,I_2\}$,
with $I_1[1:i:0:0:0]$ and  $I_2[1:-i:0:0:0]$.
\end{exa}

\subsection{Normal polars of $\mathcal Z\subset\mathbb P^n$}
\label{polars}
Let $\mathcal Z=V(F)\subset \mathbb P^n$ (with $F\in Sym(\mathbf V^\vee)$) be an
irreducible hypersurface.
For every $A\in\mathbb P^n$, {\bf the normal polar} $\mathcal P_{A,\mathcal Z}$ of $\mathcal Z$ with respect to $A$ is the set of $m\in\mathbb P^n$ satisfying
the $\left(\begin{array}{c}n+1\\ 3\end{array}\right)$ equations of \eqref{EQUA000}.
For every $m,A\in \mathbb P^n$, we have
$$m\in  \mathcal P_{A,\mathcal Z}\\ \Leftrightarrow\ m\in\mathcal B^{(0)}_{\mathcal Z}\ \mbox{or}\ A\in 
\mathcal N_m\mathcal Z,$$
extending the definition of $\mathcal N_m\mathcal Z$ from $m\in\mathcal Z$ 
to $m\in\mathbb P^n$.
\begin{lem}[The projective similitudes preserve the normal polars]
\label{preservpolar}
Let $\mathcal Z=V(F)\subset\mathbb P^n$ be a hypersurface and
$\varphi$ be any projective similitude, then 
$\varphi(\mathcal P_{A,\mathcal Z})=\mathcal P_{\varphi(A),\varphi(\mathcal Z)}$.
\end{lem}
\begin{proof}
Due to Lemma \ref{lemmesimilitude}, $\varphi(\mathcal N_m\mathcal Z)=\mathcal N_{\varphi(m)}
(\varphi(\mathcal Z))$ which gives the result.
\end{proof}
Note that 
$$\mathcal P_{A,\mathcal Z}=\mathcal B^{(0)}_{\mathcal Z}\cup\left(\bigcap_{i<j<k}\alpha_{\mathcal Z}^{-1}\mathcal H_{A,i,j,k}\right),$$
where $\mathcal{H}_{A,i,j,k}$ is the hyperplane of $\mathbb{P}^{\frac{n(n+1)}2-1}$ given by
$\mathcal{H}_{A,i,j,k}:= V(D_{i,j,k})\subset \mathbb{P}^{\frac{n(n+1)}2-1}$, with
$D_{i,j,k}:=a_ip_{j,k}-a_jp_ {i,k}+a_kp_{i,j}$. 
On  $\mathbb G(1,n)$,  $p=Pl(u,v)\in\bigcap_{i<j<k}\mathcal H_{A,i,j,k}$
means that  $\bigwedge ^3(\mathbf A,\mathbf u,\mathbf v)=0$.
\begin{lem}\label{lemdimG(1,n)}
For every $A\in \mathbb P^n$, the set $\bigcap_{i<j<k}\mathcal H_{A,i,j,k}$ is a
$(n-1)$-dimensional linear space of $\mathbb P^{\frac{n(n+1)}2-1}$ contained
in $\mathbb G(1,n)$.
\end{lem}
\begin{proof}
Let $A[a_1:\cdots:a_{n+1}]\in\mathbb P^n$.
Assume for example $a_{j_0}\ne 0$ (the proof being analogous when $a_j\ne 0$ 
for symetry reason).

Let $p\in\bigcap_{i<j<k}\mathcal H_{A,i,j,k}$. Let us prove that $p\in\mathbb G(1,n)$.
Let $i,j_1,j_2,j_3\in\{1,...,n+1\}$ be distinct indices.
Due to $D_{j_1,j_2,j_3}=D_{i,j_1,j_2}=0$, we have
\begin{multline*}
a_{j_1}a_{j_2}p_{i,j_1}p_{j_2,j_3}=a_{j_1}a_{j_2}p_{j_1,j_3}p_{i,j_2}
    +a_{j_1}p_{j_1,j_2}(-a_{j_3}p_{i,j_2})\\+a_{j_2}p_{j_1,j_2}(-a_ip_{j_1,j_3})+
      p_{j_1,j_2}(a_ia_{j_3}p_{j_1,j_2})\\
=a_{j_1}a_{j_2}p_{j_1,j_3}p_{i,j_2}-a_{j_1}a_{j_2}p_{i,j_3}p_{j_1,j_2}
      -a_jp_{j_1,j_2}D_{i,j_2,j_3}+a_ip_{j_1,j_2}D_{j_1,j_2,j_3}.
\end{multline*}
Hence $a_{j_1}a_{j_2}B_{i,j_1,j_2,j_3}=0$, for every $i,j_1,j_2,j_3\in\{1,...,n+1\}$.
So $B_{i,j_1,j_2,j_3}\ne 0$ implies that $a_{j_1}=a_{j_2}=a_{j_3}=0$ (up to a permutation 
of $(i,j_1,j_2,j_3)$)  and so  
$0=D_{j_0,j_1,j_2}=D_{j_0,j_2,j_3}=D_{j_0,j_1,j_3}$ imply $p_{j_1,j_2}=p_{j_2,j_3}=p_{j_1,j_3}=0$ which contradicts $B_{i,j_1,j_2,j_3}\ne 0$.
Since $\bigwedge ^4(\mathbf A,\mathbf A,\mathbf u,\mathbf v)=0$, we get 
that $a_{j_0}D_{i,j,k}-a_iD_{j_0,j,k}+a_jD_{j_0,i,k}-a_kD_{j_0,i,j}=0$ for every $1\le i<j<k\le n+1$ such that $i,j,k\ne j_0$. 
Hence
$\bigcap_{i<j<k}\mathcal H_{A,i,j,k}=\bigcap_{i<j, i,j\ne j_0}\mathcal H_{A,j_0,i,j}$.
Since $a_{1}\ne 0$, the $\frac {n(n-1)}2$ corresponding linear equations are linearly independent and so
$\bigcap_{i<j, i,j\ne j_0}\mathcal H_{A,j_0,i,j}\subset \mathbb P^{\frac{n(n+1)}2-1}$
has dimension 
$\frac{n(n+1)}2-1-\frac{(n-1)n}2=n-1$.
\end{proof}
\begin{prop}\label{degrepolaire}
Let $\mathcal Z=V(F)\subset\mathbb P^n$ be an irreducible hypersurface
such that $d_{\mathcal Z}:=\deg\mathcal Z\ge 2$ and $\dim\mathcal B^{(0)}_{\mathcal Z}\le 1$.
Then, for a generic $A\in\mathbf V$, we have $\dim \mathcal P_{A,\mathcal Z}=1$ and 
$$\deg \mathcal P_{A,\mathcal Z}=\sum_{k=0}^{n-1}(d_{\mathcal Z}-1)^k.$$
\end{prop}
\begin{proof}
Due to the proof of Lemma \ref{lemdimG(1,n)}, for every $A\in\mathbb P^n$
such that $a_{n+1}\ne 0$, we have
$\bigcap_{i<j<k}\mathcal H_{A,i,j,k}=\bigcap_{i<j<n+1}\mathcal H_{A,n+1,i,j}\subset\mathbb P^{\frac{n(n+1)}2-1}$
and so 
$$\mathcal P_{A,\mathcal Z}= \bigcap_{1\le i<j\le n}V(E_{A,i,j})\subset\mathbb P^n,$$
with
$$\forall i,j\in\{1,...,n\},\quad E_{A,i,j}:=L_{A,i}F_{x_j}-L_{A,j}F_{x_i}\quad\mbox{and}\quad L_{A,i}:=a_{n+1}x_i-a_ix_{n+1},$$
i.e. $E_{A,i,j}=a_{n+1}(x_iF_{x_j}-x_jF_{x_i})+a_jx_{n+1}F_{x_i}-a_ix_{n+1}F_{x_j}$.
Note that 
\begin{equation}\label{simplification}
L_{A,k}E_{A,i,j}-L_{A,j}E_{A,i,k}=L_{A,i}E_{A,k,j}\quad\mbox{and}\quad 
F_{x_k}E_{A,i,j}-F_{x_j}E_{A,i,k}=F_{x_i}E_{A,k,j}.
\end{equation}
Hence 
\begin{equation}\label{n-1eq}
\forall i\in\{1,...,n\},\quad
\mathcal P_{A,\mathcal Z}\setminus V(L_{A,i},F_{x_i})=\bigcap_{j\in\{1,...,n\}\setminus\{i\}}
V(E_{A,i,j})\setminus  V(L_{A,i},F_{x_i}),
\end{equation}
and so $\dim\mathcal P_{A,\mathcal Z}\ge 1$.
Recall that $\mathcal B_{\mathcal Z}^{(0)}=\bigcap_{i=1}^nV(x_{n+1}F_{x_i})\cap
\bigcap_{i,j=1}^nV(x_iF_{x_j}-x_jF_{x_i})$ and that we have assumed that
$\dim \mathcal B_{\mathcal Z}^{(0)}\le 1$. In particular
$\mathcal P_{A,\mathcal Z}\cap\mathcal H^\infty=\mathcal B_{\mathcal Z}^{(0)}\cap
\mathcal H^\infty$ and 
$\mathcal B_{\mathcal Z}^{(0)}\setminus\mathcal H^\infty
=V(F_{x_1},...,F_{x_n})\setminus\mathcal H^\infty$.
This combined with \eqref{n-1eq} and with the expression of $E_{A,i,j}$ leads to $\dim\mathcal P_{A,\mathcal Z}= 1$.

Now let us compute the degree of $\mathcal P_{A,\mathcal Z}$.
The idea is to prove an induction formula. 
Assume that $A\not\in\mathcal H^\infty$ is such that $\dim\mathcal P_{A,\mathcal Z}
=1$. Let $\mathcal H=V(\sum_{i=1}^{n+1}
\alpha_ix_i)\subset\mathbb P^n$ be an hyperplane 
such that $\#(\mathcal H\cap \mathcal P_{A,\mathcal Z})
=\deg \mathcal P_{A,\mathcal Z}$ and $\sum_{i=1}^n\alpha_i^2\ne 0$.
We compose by a projective similitude $\phi:\mathbb P^n\rightarrow\mathbb P^n$ 
so that $\phi(A)$ has projective coordinates $[0:\cdots:0:1]$ and that $\hat{\mathcal H}:=\phi(\mathcal H)=V(x_1-b x_{n+1})\subset\mathbb P^n$. Set 
$\hat{\mathcal Z}:=\phi(\mathcal Z)=V(\hat F)\subset\mathbb P^n$, with $\hat F:= F\circ\phi$.
Hence $\phi(\mathcal P_{A,\mathcal Z})=\mathcal P_{\phi(A),\tilde{\mathcal Z}}$
is the set of points $m[x_1:\cdots:x_{n+1}]\in\mathbb P^n$
such that $\bigwedge^2 \left(\left(\begin{array}{c}x_1\\ \vdots\\x_n\\0\end{array}\right),\left(
\begin{array}{c}\hat F_{x_1}\\ \vdots\\\hat F_{x_n}\\0\end{array}\right)\right)=0$ in $\bigwedge^2\mathbf V$.
We then define $G(x_2,...,x_{n+1}):=\hat F(bx_{n+1},x_2,...,x_{n+1})
\in\mathbb C[x_2,...,x_{n+1}]$ and $H(x_3,...,x_{n+1}):=G(0,x_3,...,x_{n+1})\in\mathbb C[x_3,...,x_{n+1}]$.
We set $\mathcal Z_1:=V(G)\subset\mathbb P^{n-1}$,  
$\mathcal Z_2:=V(H)\subset\mathbb P^{n-2}$ and $B_k[0:...:0:1]\in\mathbb P^{k}$.
We then write $\mathcal P_{n-k,B_{n-k},{\mathcal Z}_k}$ for the normal polar
in $\mathbb P^{n-k}$ of $\mathcal Z_k\subset\mathbb P^{n-k}$ with respect to 
$B_{n-k}$, with the conventions $\mathcal P_{0,B_0,{\mathcal Z}_k}=\emptyset$
(if $k=n$) and $\mathcal P_{1,B_1,{\mathcal Z}_k}=\mathbb P^1$ (if $k=n-1$).
We will prove that
\[
\deg \mathcal P_{A,{\mathcal Z}}=d\times \deg \mathcal P_{n-1,B_{n-1},{\mathcal Z}_1} - (d-1)\times \deg \mathcal P_{n-2,B_{n-2},{\mathcal Z}_2}\, .
\]
Let $\Pi_1: \mathbb P^n\rightarrow \mathbb P^{n-1}$
and $\Pi_2:\mathbb P^n\rightarrow \mathbb P^{n-2}$ be
the projections given by $\Pi_1[x_1:...:x_{n+1}]=[x_2:...:x_{n+1}]$
and $\Pi_2[x_1:...:x_{n+1}]=[x_3:...:x_{n+1}]$.
Due to  \eqref{simplification},
\begin{equation}\label{decomp}
\hat{\mathcal H}\cap V(x_1\hat F_{x_2}-x_2\hat F_{x_1})\cap\Pi_1^{-1}(\mathcal P_{n-1,B_{n-1},{\mathcal Z}_1})=(\hat{\mathcal H}\cap 
\mathcal P_{\phi(A),\hat{\mathcal Z}})\cup [\hat{\mathcal H}\cap  V(x_2,\hat F_{x_2})\cap\Pi_2^{-1}(\mathcal P_{n-2,B_{n-2},{\mathcal Z}_2})].
\end{equation}
For a generic $\mathcal H$ and for a good choice of $\phi$,
the union in the right hand side of \eqref{decomp} is disjoint and
\begin{eqnarray*}
\deg \mathcal P_{A,\mathcal Z}&=&\#(\mathcal H\cap \mathcal P_{A,\mathcal Z})\\
&=&\#(\hat{\mathcal H}\cap \deg \mathcal P_{\phi(A),\hat{\mathcal Z}})\\
&=& d_{\mathcal Z}.\deg \mathcal P_{n-1,B_{n-1},{\mathcal Z}_1} -(d_{\mathcal Z}-1).\mathcal P_{n-2,B_{n-2},{\mathcal Z}_2}.
\end{eqnarray*}
Hence $\deg \mathcal P_{A,\mathcal Z}=d_{\mathcal Z}$ if $n=2$ and 
 $\deg \mathcal P_{A,\mathcal Z}=d_{\mathcal Z}^2-d_{\mathcal Z}+1$ if $n=3$.
The formula in the general case follows by induction.
\end{proof}
Analogously we have the following.
\begin{prop}[n=3]\label{rqebasepoints}
If $\mathcal S$ is an irreducible algebraic surface of $\mathbb P^3$
(with projective coordinates $[x:y:z:t]$)
and if $\dim \mathcal B^{(0)}_{\mathcal S}=2$, then the two dimensional part of $\mathcal B^{(0)}_{\mathcal S}$
is $V(H)\subset\mathbb P^3$ for some homogeneous polynomial 
$H\in\mathbb C[x,y,z,t]$ of degree $d_H$.
We write $\boldsymbol{\alpha}_{\mathcal S}=H\cdot\boldsymbol{\tilde\alpha}_{\mathcal S}$.
Note that the regular map $\tilde\alpha_{\mathcal S}:\mathbb P^3\setminus\tilde{\mathcal B}^{(0)}_{\mathcal S}
\rightarrow\mathbb P^3$ (with $\dim \tilde{\mathcal B}^{(0)}_{\mathcal S}\le 1$) associated to $\boldsymbol{\tilde\alpha}_{\mathcal S}$. We then adapt our study with respect to $\tilde\alpha_{\mathcal S}$ instead 
of $\alpha_{\mathcal S}$ and define the corresponding polar $\tilde {\mathcal P}_{A,\mathcal S}$.
Then, we have $\deg \tilde {\mathcal P}_{A,\mathcal S}=( d_{\mathcal S}
        -d_H)^2-d_{\mathcal S}+d_H+1$.
\end{prop}
\begin{exa}\label{exemple1}
Note that the only irreducible quadrics $\mathcal S=V(F)\subset\mathbb P^3$
such that $\dim \mathcal B^{(0)}_{\mathcal S}\ge 2$ are the spheres and cones,
i.e. with $F$ of the following form
$$F(x,y,z,t)=(x-x_0t)^2+(y-y_0t)^2+(z-z_0t)^2+a_0t^2,$$
where $x_0,y_0,z_0,a_0$ are complex numbers (it is a sphere if $a_0\ne 0$ and it is a cone otherwise).

Hence, due to Proposition \ref{degrepolaire}, the degree of a generic normal polar of any
irreducible quadric of $\mathbb P^3$ which is neither a sphere nor a cone is 3.

Moreover, for a sphere or for a cone, applying Proposition \ref{rqebasepoints} with $H=t$, 
$\tilde{\mathcal P}_{A,S}$ is a line for a generic $A\in\mathbb P^3$.
\end{exa}
\subsection{Proof of Theorems \ref{thmhypersurface} and \ref{formulegeneralehypersurface}}
\begin{proof}[Proof of Theorem \ref{formulegeneralehypersurface}]
Let $\mathcal Z$ be an irreducible surface of $\mathbb P^n$ of degree 
$d_{\mathcal Z}\ge 2$ such that $\#\mathcal B_{\mathcal Z}<\infty$.
It remains to prove that
\begin{equation}\label{formuleclassenormalesurface}
c_\nu(\mathcal S)= d_{\mathcal Z}.\deg \mathcal P_{A,\mathcal Z}
      -\sum_{P\in\mathcal B_{\mathcal Z}}i_P\left(\mathcal Z,\mathcal P_{A,\mathcal Z}\right),
\end{equation}
for a generic $A\in\mathbb P^n$.
Note that, for a generic $A\in\mathbb P^n$, 
since $\overline{\alpha_{\mathcal Z}(\mathcal Z)}$ is irreducible of
dimension at most $n-1$, we have
$\#\bigcap_{i<j<k}\mathcal H_{A,i,j,k}\cap \overline{\alpha_{\mathcal Z}(\mathcal Z)}<\infty$ and so
$\# \mathcal P_{A,\mathcal Z}\cap \mathcal Z<\infty$ (since $\#\mathcal B_{\mathcal Z}<\infty$).
Since $\dim \mathcal P_{A,\mathcal Z}=1$ and $\# \mathcal Z\cap \mathcal P_{A,\mathcal Z}<\infty$
for a generic $A\in\mathbb P^n$, due to Proposition
\ref{degrepolaire} and to the Bezout formula, we have:
\begin{eqnarray*}
d_{\mathcal Z}.\deg \left(\mathcal P_{A,\mathcal Z}\right)&=&
\deg\left(\mathcal Z\cap \mathcal P_{A,\mathcal Z}\right)\\
&=&\sum_{P\in\mathcal B_{\mathcal Z}}i_P\left(\mathcal Z, \mathcal P_{A,\mathcal Z}\right)+
  \sum_{P\in\mathcal S\setminus\mathcal B_{\mathcal Z}}i_P\left(\mathcal Z, \mathcal P_{A,\mathcal Z}\right).
\end{eqnarray*}
Now let us prove that, for a generic $A\in\mathbb P^n$,
\begin{equation}\label{multpolaire}
\sum_{P\in\mathcal Z\setminus\mathcal B_{\mathcal Z}}i_P\left(\mathcal Z, \mathcal P_{A,\mathcal Z}\right)=
\#((\mathcal Z\cap \mathcal P_{A,\mathcal Z})\setminus\mathcal B_{\mathcal Z}).
\end{equation}
Since $\alpha_{\mathcal Z}$ defines a rational map, $\overline{\alpha_{\mathcal Z}(\mathcal Z)}$ is irreducible and its dimension is at most $n-1$.

Assume first that $\dim \overline{\alpha_{\mathcal Z}(\mathcal Z)}<n-1$.
For a generic $A\in\mathbb P^n$,
the plane $\bigcap_{i<j<k}\mathcal H_{A,i,j,k}$ does not meet 
$\overline{\alpha_{\mathcal Z}(\mathcal Z)}$ and so the left and right hand sides of \refeq{multpolaire} are both zero. So Formula \eqref{formuleclassenormalesurface}
holds true with $c_\nu(\mathcal Z)=0$.

Assume now that $\dim \overline{\alpha_{\mathcal Z}(\mathcal Z)}=n-1$. Then, for a generic $A\in\mathbb P^n$,
the plane $\bigcap_{i<j<k}\mathcal H_{A,i,j,k}$ meets $\alpha_{\mathcal Z}(\mathcal Z)$ transversally (with
intersection number 1 at every intersection point) and does not meet $\overline{\alpha_{\mathcal Z}(\mathcal Z)}\setminus\alpha_{\mathcal Z}(\mathcal Z)$.
This implies that, for a generic $A\in\mathbb P^n$, we have
$i_P\left(\mathcal Z, \mathcal P_{A,\mathcal Z}\right)=1$ for every $P\in(\mathcal S\cap\mathcal P_{A,\mathcal Z})
\setminus\mathcal B_{\mathcal Z}$ and so \refeq{multpolaire} follows.
Hence, for a generic $A\in\mathbb P^n$, we have
\begin{eqnarray*}
d_{\mathcal Z}.\deg \left(\mathcal P_{A,\mathcal Z}\right)
&=&\sum_{P\in\mathcal B_{\mathcal Z}}i_P\left(\mathcal Z, \mathcal P_{A,\mathcal Z}\right)+
    \#\{P\in \mathcal Z\setminus \mathcal B_{\mathcal Z} :\ A\in\mathcal N_m\mathcal Z\}\\
&=&\sum_{P\in\mathcal B_{\mathcal Z}}i_P\left(\mathcal S,\mathcal P_{A,\mathcal Z}\right)+c_\nu(\mathcal Z),
\end{eqnarray*}
which gives \eqref{formuleclassenormalesurface}.
\end{proof}
\begin{proof}[Proof of Theorem  \ref{thmhypersurface}]
Let $\mathcal H=V(\sum_{i=1}^{n+1}a_ix_i)$ be an hyperplane such that
$\mathcal H\ne\mathcal H^\infty$. For every $m\in\mathcal H$,
$n_{\mathcal H}(m)[a_1:\cdots:a_n:0]\in\mathbb P^n$.
Hence every $A\in\mathbb P^n$ belong to a single normal line to $\mathcal H$
(the line containing $A$ and $[a_1:\cdots:a_n:0]$).

The case $d_{\mathcal Z}\ge 2$ follows from Theorem \ref{formulegeneralehypersurface} and Remark \ref{Basegenerique}.
\end{proof}
\section{Orthogonal incidence variety and Schubert classes}\label{incidenceschubert}
\subsection{Orthogonal incidence variety}
Let us write as usual $\mathbb G(1,n)$ (resp. $\mathbb G(n-1,n)$) for the grassmannian of the lines
(resp. of the hyperplanes) of $\mathbb P^n$.
Let us write $pr_{1}:\mathbb{G}(1,n)\times \mathbb{G}(n-1,n)\rightarrow
\mathbb{G}(1,n)$ and $pr_{2}:\mathbb{G}(1,n)\times \mathbb{G}
(n-1,n)\rightarrow \mathbb{G}(n-1,n)$ for the canonical projections.
We define the {\bf orthogonal incidence variety} $\mathcal{I}^{\perp }$
by
$$\mathcal{I}^{\perp }:=\{(\mathcal{L}_1,\mathcal{H}_1)\in \mathbb G(1,n)\times 
\mathbb{G}(n-1,n)\, :\, \mathcal{L}_1{\perp }\mathcal {H}_1\}.$$
Let us write $p_{1}:\mathcal{I}^{\mathbb{\perp }}\rightarrow \mathbb{G}
(1,n)$ and $p_{2}:\mathcal{I}^{\mathbb{\perp }}\rightarrow \mathbb{G}(n-1,n)$
for the restrictions of $pr_{1}$ and  $pr_{2}$.
We want to describe in the Chow ring of $\mathbb{G}(1,n)$  and $\mathbb{G}
(n-1,n)\equiv \mathbb{P}^{n\vee }$
the rational equivalence class of $p_{2}p_{1}^{-1}(\mathcal{L)}$ and $
p_{1}p_{2}^{-1}(\mathcal{H)}$.
\begin{lem}
$p_{2}\circ p_{1}^{-1}:\mathbb{G}(1,n)\setminus\{\mathcal L\subset\mathcal H^\infty\}
\rightarrow \mathbb{G}(n-1,n)$
is a line projective bundle and 
$p_{1}\circ p_{2}^{-1}:\mathbb{G}(n-1,n)\setminus\{\mathcal H^\infty\}\rightarrow\mathbb{G}(1,n)$ and
is a plane projective bundle. 
\end{lem}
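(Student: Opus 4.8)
The plan is to realise the incidence correspondence $\mathcal{I}^{\perp}$, over each of the two excised open sets, as the projectivisation of an explicit vector bundle on the base, reading the fibres directly off the orthogonality condition $(a_1,\dots,a_n,0)\in\mathbf{L}$.

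First I would handle $p_1$ over $U_1:=\mathbb{G}(1,n)\setminus\{\mathcal L\subset\mathcal H^\infty\}$. Let $\mathbf S\subset\mathbf V\otimes\mathcal{O}_{U_1}$ be the restriction of the tautological rank-$2$ subbundle, so that its fibre at $\mathcal L=\mathbb{P}(\mathbf L)$ is $\mathbf L$. The condition $\mathcal L\not\subset\mathcal H^\infty$ means exactly that $\mathbf L\not\subset\overrightarrow{\mathbf E}$, hence that $\mathbf S$ meets the trivial subbundle $\overrightarrow{\mathbf E}\otimes\mathcal{O}_{U_1}$ in constant rank $1$; the intersection $\mathbf D:=\mathbf S\cap(\overrightarrow{\mathbf E}\otimes\mathcal{O}_{U_1})$ is thus a line subbundle, whose fibre $\mathbf L\cap\overrightarrow{\mathbf E}$ is spanned by the direction $(b_1,\dots,b_n,0)$ of $\mathcal L$. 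A hyperplane $\mathcal H=[a_1:\cdots:a_{n+1}]$ is orthogonal to $\mathcal L$ iff $(a_1,\dots,a_n)$ is proportional to $(b_1,\dots,b_n)$, i.e. iff $\mathcal H$ lies on the pencil spanned by $[b_1:\cdots:b_n:0]$ and by $\mathcal H^\infty=[0:\cdots:0:1]$. Hence $p_1^{-1}(\mathcal L)\cong\mathbb{P}^1$, and globally $p_1^{-1}(U_1)\cong\mathbb{P}(\mathbf D\oplus\mathcal{O}_{U_1})$ (after the basis identification $\mathbf V\cong\mathbf V^\vee$) is a projective line bundle; composing with $p_2$ sends each fibre isomorphically onto this pencil, a line of $\mathbb{G}(n-1,n)$ through the fixed point $\mathcal H^\infty$.

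Next I would handle $p_2$ over $U_2:=\mathbb{G}(n-1,n)\setminus\{\mathcal H^\infty\}$. Removing $\mathcal H^\infty$ is precisely what makes the vector $\mathbf v(\mathcal H):=(a_1,\dots,a_n,0)$ nowhere zero on $U_2$, so it spans a line subbundle $\mathbf W\subset\mathbf V\otimes\mathcal{O}_{U_2}$, with rank-$n$ quotient $\mathbf Q:=(\mathbf V\otimes\mathcal{O}_{U_2})/\mathbf W$. The condition $(a_1,\dots,a_n,0)\in\mathbf L$ now says that $\mathcal L$ ranges over the lines of $\mathbb{P}^n$ through the fixed point $w(\mathcal H)=[a_1:\cdots:a_n:0]\in\mathcal H^\infty$; these are parametrised by $\mathbb{P}(\mathbf V/\langle\mathbf v(\mathcal H)\rangle)=\mathbb{P}^{n-1}$. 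Therefore $p_2^{-1}(U_2)\cong\mathbb{P}(\mathbf Q)$ is a $\mathbb{P}^{n-1}$-bundle — a plane projective bundle when $n=3$, as stated — and $p_1$ identifies each fibre with the $\mathbb{P}^{n-1}$ of lines through $w(\mathcal H)$ (for $n=3$, a Schubert plane of $\mathbb{G}(1,3)$).

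The hard part is purely the local-freeness step: checking that $\mathbf D$ and $\mathbf W$ really have constant rank, so that they are genuine sub/quotient bundles and the fibrewise pictures above globalise to $\mathbb{P}(\mathcal E)$-structures. This is exactly where the excised loci are unavoidable: over $\{\mathcal L\subset\mathcal H^\infty\}$ the intersection $\mathbf L\cap\overrightarrow{\mathbf E}$ jumps from dimension $1$ to $2$, while at $\mathcal H^\infty$ one has $\mathbf v(\mathcal H)=0$ and the correspondence degenerates (indeed $\mathcal H^\infty$ is orthogonal to every line, since its coefficient vector $(0,\dots,0,0)$ lies in every $\mathbf L$). Once constancy of rank is in hand, everything else is routine linear algebra in Pl\"ucker and dual coordinates: the total spaces are the asserted projectivisations, and $p_2$ (resp. $p_1$) embeds the fibres as the claimed lines (resp. planes).
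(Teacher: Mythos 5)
Your proposal is correct, and at its core it performs the same fibre identification as the paper's proof: over $\mathcal{L}\not\subset\mathcal{H}^\infty$ the orthogonality condition confines $\mathcal{H}$ to the pencil spanned by $[b_1:\cdots:b_n:0]$ and $\mathcal{H}^\infty$, and over $\mathcal{H}\neq\mathcal{H}^\infty$ it confines $\mathcal{L}$ to the lines through $[a_1:\cdots:a_n:0]$ — exactly the fibres the paper writes down in coordinates (the paper additionally records the degenerate fibres, namely $p_1(p_2^{-1}(\mathcal{H}^\infty))=\mathbb{G}(1,n)$ and the larger family of hyperplanes over $\mathcal{L}\subset\mathcal{H}^\infty$, which you only gesture at when explaining why the excisions are needed). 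Where you genuinely go beyond the paper is the globalization: the paper's proof stops at the set-theoretic description of the fibres and never verifies a bundle structure, whereas you realise $p_1^{-1}(U_1)$ as $\mathbb{P}(\mathbf{D}\oplus\mathcal{O}_{U_1})$, with $\mathbf{D}=\mathbf{S}\cap(\overrightarrow{\mathbf{E}}\otimes\mathcal{O}_{U_1})$ obtained as the kernel of the constant-rank-one map $\mathbf{S}\to(\mathbf{V}/\overrightarrow{\mathbf{E}})\otimes\mathcal{O}_{U_1}$, and $p_2^{-1}(U_2)$ as $\mathbb{P}(\mathbf{Q})$, which actually proves local triviality. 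The one loose point on your side is that $\mathbf{v}(\mathcal{H})=(a_1,\dots,a_n,0)$ is defined only up to scale, so $\mathbf{W}$ is more properly the image of the tautological line bundle $\mathcal{O}_{\mathbb{P}^{n\vee}}(-1)$ under the linear map $(a_1,\dots,a_{n+1})\mapsto(a_1,\dots,a_n,0)$, which is fibrewise injective precisely off $\mathcal{H}^\infty$; locally choosing representatives, as you implicitly do, repairs this, so it is a gloss rather than a gap. Your parenthetical that the second family is a $\mathbb{P}^{n-1}$-bundle, hence a plane bundle only for $n=3$, also correctly sharpens the paper's loose phrase ``plane projective bundle'' for general $n$. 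In short: the same fibrewise computation underlies both arguments, but your version buys the actual $\mathbb{P}(\mathcal{E})$-structure that the paper leaves implicit, at the cost of the rank-constancy bookkeeping; the paper's coordinate computation is shorter and suffices for the use made of the lemma, namely the remark that the class of $p_1(p_2^{-1}(\mathcal{H}))$ is the Schubert class $\sigma_{n-1}$.
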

\begin{proof}
Let $\mathcal H=V(a_1x_1+\cdots a_{n+1}x_{n+1})$ be a projective hyperplane of $\mathbb P^n$, which is not $\mathcal H^\infty$.
Then 
$$ p_{1}(p_{2}^{-1}(\mathcal{H}))=\{\mathcal L\in \mathbb G(1,n),\ (a_1,\cdots,a_n,0)\in \mathbf{L}\}.$$
Moreover $ p_{1}(p_{2}^{-1}(\mathcal{H}^\infty))=\mathbb G(1,n)$.\\
Let $\mathcal L\not\subset\mathcal H^\infty$ be a line of $\mathbb P^3$, let $A_0[a_1:\cdots:a_n:0]$ be the only point in $\mathcal L\cap\mathcal H^\infty$, we have
$$ p_2(p_1^{-1}(\mathcal L))=\{\mathcal H\in\mathbb G(n-1,n)\, :\ \exists [a:b]\in\mathbb P^1,\ 
      \mathcal H=V(aa_1x_1+\cdots+aa_nx_n+bx_{n+1}) \}.$$
Finally, if $\mathcal L=\mathbb P(\mathbf L)\subset\mathcal H^\infty$ is a projective line, then we have
$$ p_2(p_1^{-1}(\mathcal L))=\{\mathcal H\in\mathbb G(n-1,n)\, :\ \exists a,b\in\mathbb C,\ \exists (a_1,\cdots,a_n,0)\in \mathbf{L},\ 
      \mathcal H=V(aa_1x_1+\cdots+aa_nx_n+bx_{n+1})   \}.$$
\end{proof}
It follows directly from the proof of this lemma that if $\mathcal H\in\mathbb G(n-1,n)\setminus\{\mathcal H^\infty\}$, the class of $p_{1}(p_{2}^{-1}(\mathcal{\mathcal H)})$ in the Chow ring $A^*(\mathbb{G}(1,n))$ is simply the Schubert class $\sigma_{n-1}$.
\subsection{Schubert classes for $\mathbb G(1,3)$}
Given a flag $\mathbf{F}=\{\mathbf{V_1}\subset \mathbf {V_2}\subset \mathbf {V_3}\subset \mathbf {V_4}=\mathbf{V}\}$ of $\mathbf V$
with dim$_{\mathbb{C}}(V_{i})=i$ for all integer $i$, we consider
its associated projective flag $\mathcal F$ of $\mathbb{P}^{3}$ (image by the canonical projection $\pi :\mathbf{V\backslash \{0\}}\rightarrow \mathbb{P}^{3}$)
\[
\mathcal{F}=\{p\in \mathcal{D}\subset \mathcal{P}\subset 
\mathbb{P}^{3}\}.
\]
Let $\mathcal{Z}^{k}$ denote the set of cycles of
codimension $k$ in $\mathbb{G}(1,3)$. We recall that the \textbf{Schubert cycles} of 
$\mathbb{G}(1,3)$ associated to $\mathcal F$ (or to {\bf F}) are given by
\begin{equation}\label{EQSchubert}
\left\{ 
\begin{array}{c}
\Sigma _{0,0}:=\mathbb{G}(1,3)\in \mathcal{Z}^{0}(\mathbb{G}(1,3)) \\ 
\Sigma _{1,0}:=\left\{ \mathcal{L}\in \mathbb{G}(1,3);\mathcal{D\cap L\neq
\varnothing }\right\} \in \mathcal{Z}^{1}(\mathbb{G}(1,3)) \\ 
\Sigma _{2,0}:=\left\{ \mathcal{L}\in \mathbb{G}(1,3);p\mathcal{\in L}%
\right\} \in \mathcal{Z}^{2}(\mathbb{G}(1,3)) \\ 
\Sigma _{1,1}:=\left\{ \mathcal{L}\in \mathbb{G}(1,3);\mathcal{L\subset P}%
\right\} \in \mathcal{Z}^{2}(\mathbb{G}(1,3)) \\ 
\Sigma _{2,1}:=\Sigma _{2,0}\cap \Sigma _{1,1}\in \mathcal{Z}^{3}(\mathbb{G}%
(1,3)) \\ 
\Sigma _{2,2}:=\left\{ \mathcal{L}\in \mathbb{G}(1,3);\mathcal{L=D}\right\}
\in \mathcal{Z}^{2}(\mathbb{G}(1,3))%
\end{array}
\right. .
\end{equation}
We write as usual $A^{\ast }(\mathbb{G}(1,3))$
for the Chow ring of $\mathbb{G}(1,3)$ and 
$\sigma _{i,j}:=\left[ \Sigma _{i,j}\right] \in A^{i+j}(\mathbb{G}(1,3))$ for \textbf{Schubert classes}.
For commodity we will use the notation $\Sigma _k:=\Sigma _{k,0}$ and $\sigma _k:=\sigma _{k,0}$.
We recall that $A^{\ast }(\mathbb{G}(1,3))$ is freely generated as graded $\mathbb{Z}$%
-module by $\left\{ \sigma _{i,j};0\leq j\leq i\leq 2\right\} $ with the following
multiplicative relations
\[
(E)\left\{ 
\begin{array}{c}
\sigma _{1,1}=\sigma _{1}^{2}-\sigma _{2} \\ 
\sigma _{1,1}\cdot \sigma _{1}=\sigma _{1}\cdot \sigma _{2}=\sigma _{2,1} \\ 
\sigma _{2,1}\cdot \sigma _{1}=\sigma _{1,1}^{2}=\sigma _{2}^{2}=\sigma
_{2,2} \\ 
\sigma _{1,1}\cdot \sigma _{2}^{{}}=0%
\end{array}
\right.  .
\]
Hence, the Chow ring of the grassmannian is
\[
A^{\ast }(\mathbb{G}(1,3))=\frac{\mathbb{Z[}\sigma _{1},\sigma _{2}]}{%
(2\sigma _{1}\cdot \sigma _{2}-\sigma _{1}^{3},\sigma _{1}^{2}\cdot \sigma
_{2}-\sigma _{2}^{2})}.
\]
\subsection{Proofs of Theorems \ref{formulegeneralesurface}
and \ref{factorisable}}
Recall that we have defined
$
\mathfrak{N}_{\mathcal{S}}:=\overline{\{\mathcal{N}_{m}(\mathcal{S});m\in \mathcal{S}\}}\subset
 \mathbb{G}(1,3)$
and $\mathfrak{n}_{\mathcal{S}}:=[\mathfrak{N}_{\mathcal{S}
}]\in A^{2}(\mathbb{G}(1,3))$.
\begin{prop}\label{PROP1}
Let $\mathcal S\subset \mathbb P^3$ be an irreducible surface of degree $d\ge 2$ of $\mathbb P^3$.
\begin{itemize}
\item If $\#\mathcal B_{\mathcal S}<\infty$, we have
\[
\mathfrak{n}_{\mathcal{S}}=c_\nu(\mathcal S).\sigma_2+d_{\mathcal S}(d_{\mathcal S}-1).\sigma _{1,1}\in A^{2}(\mathbb{G}(1,3)).
\]
\item If $\dim\mathcal B^{(0)}_{\mathcal S}=2$ with two dimensional part $V(H)$ 
and $\#\tilde{\mathcal B}_{\mathcal S}<\infty$ 
(with the notations of Proposition \ref{rqebasepoints}), then we have
\[
\mathfrak{n}_{\mathcal{S}}=c_\nu(\mathcal S).\sigma_2+d_{\mathcal S}(d_{\mathcal S}-d_H-1).\sigma _{1,1}\in A^{2}(\mathbb{G}(1,3)).
\]
\end{itemize}
\end{prop}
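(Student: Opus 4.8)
The plan is to compute the Schubert class $\mathfrak{n}_{\mathcal S}=[\mathfrak N_{\mathcal S}]\in A^2(\mathbb G(1,3))$ by decomposing it in the basis $\{\sigma_2,\sigma_{1,1}\}$ and determining the two coefficients by intersecting with the dual basis. Since $A^2(\mathbb G(1,3))$ is freely generated by $\sigma_2$ and $\sigma_{1,1}$, and these satisfy the self-intersection relations $\sigma_2^2=\sigma_{2,2}$, $\sigma_{1,1}^2=\sigma_{2,2}$, $\sigma_2\cdot\sigma_{1,1}=0$ recorded in the relations $(E)$ of Section \ref{Schubert}, I would write $\mathfrak n_{\mathcal S}=a\,\sigma_2+b\,\sigma_{1,1}$ and recover $a=\mathfrak n_{\mathcal S}\cdot\sigma_2$ and $b=\mathfrak n_{\mathcal S}\cdot\sigma_{1,1}$ (each product landing in $A^4\cong\mathbb Z$ via $\sigma_{2,2}$). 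The geometric content is thus to identify these two intersection numbers.

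First I would show $a=c_\nu(\mathcal S)$. Geometrically $\sigma_2=\sigma_{2,0}$ is the class of lines through a fixed generic point $p\in\mathbb P^3$; intersecting $\mathfrak N_{\mathcal S}$ with this cycle counts the normal lines $\mathcal N_m(\mathcal S)$ passing through a generic $p$, which is by definition the normal class $c_\nu(\mathcal S)$. The point requiring care here is that this intersection is transverse and reduced for generic $p$, i.e.\ that the map $m\mapsto\mathcal N_m(\mathcal S)$ is generically injective onto $\mathfrak N_{\mathcal S}$ so each incidence is counted once; this is where the hypothesis $\#\mathcal B_{\mathcal S}<\infty$ (so that $\dim\mathcal B\le 1$ and Proposition \ref{degrepolaire} applies) enters, guaranteeing $\dim\mathfrak N_{\mathcal S}=2$ and that no positive-dimensional fibers contribute.

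Second I would compute $b=\mathfrak n_{\mathcal S}\cdot\sigma_{1,1}$. Since $\sigma_{1,1}$ is the class of lines contained in a fixed generic plane $\mathcal P\subset\mathbb P^3$, this counts the normal lines of $\mathcal S$ that lie in $\mathcal P$. A normal line $\mathcal N_m(\mathcal S)=(m\,n_{\mathcal S}(m))$ lies in $\mathcal P$ exactly when both $m\in\mathcal P$ and $n_{\mathcal S}(m)\in\mathcal P$. Writing $\mathcal C:=\mathcal S\cap\mathcal P$ (a plane curve of degree $d_{\mathcal S}$) and noting that $n_{\mathcal S}(m)=[F_x:F_y:F_z:0]$ always lies in $\mathcal H^\infty$, the condition $n_{\mathcal S}(m)\in\mathcal P$ together with $m\in\mathcal P$ forces $n_{\mathcal S}(m)$ to lie on the line $\mathcal P\cap\mathcal H^\infty$; this is one linear condition on $m\in\mathcal C$ and cutting it out yields $\deg\mathcal C\cdot(\deg\mathcal C-1)=d_{\mathcal S}(d_{\mathcal S}-1)$ solutions by a B\'ezout / polar count on the plane curve $\mathcal C$. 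I expect this coefficient computation to be the main obstacle, precisely because one must verify the count is exactly $d_{\mathcal S}(d_{\mathcal S}-1)$ with no excess contribution from base points, and that genericity of $\mathcal P$ makes all intersections transverse away from $\mathcal B_{\mathcal S}$.

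Finally, for the second bullet I would repeat the identical argument after replacing $\alpha$ by $\tilde\alpha=\alpha/H$ as in Proposition \ref{rqebasepoints}: the normal map now factors through $\tilde\nu$ of degree $\tilde d_{\mathcal S}=d_{\mathcal S}-d_H$ in its defining entries, so the $\sigma_2$-coefficient is still $c_\nu(\mathcal S)$ while the $\sigma_{1,1}$-count of normal lines lying in a generic plane becomes $d_{\mathcal S}(d_{\mathcal S}-d_H-1)$, the factor $d_{\mathcal S}$ coming from $m\in\mathcal S\cap\mathcal P$ and the factor $(\tilde d_{\mathcal S}-1)$ from the reduced degree of the Gauss-type condition $n_{\mathcal S}(m)\in\mathcal P\cap\mathcal H^\infty$ after dividing out $H$. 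The hypothesis $\#\tilde{\mathcal B}\cap\mathcal S<\infty$ plays the same role as before, ensuring the relevant loci are of the expected dimension and the two coefficients are genuine transverse intersection numbers.
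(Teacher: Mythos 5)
Your overall strategy and your treatment of the first bullet coincide with the paper's proof: write $\mathfrak n_{\mathcal S}=a\,\sigma_2+b\,\sigma_{1,1}$, extract $a=\mathfrak n_{\mathcal S}\cdot\sigma_2=c_\nu(\mathcal S)$ and $b=\mathfrak n_{\mathcal S}\cdot\sigma_{1,1}$ using Kleiman transversality and the relations $\sigma_2^2=\sigma_{1,1}^2=\sigma_{2,2}$, $\sigma_2\cdot\sigma_{1,1}=0$, then compute $b$ by B\'ezout from the system $F=0$, $a_1F_x+a_2F_y+a_3F_z=0$, $a_1x+a_2y+a_3z+a_4t=0$. (Your phrase ``one linear condition'' is loose --- the condition $n_{\mathcal S}(m)\in\mathcal H\cap\mathcal H^\infty$ has degree $d_{\mathcal S}-1$ in $m$ --- but your count $d_{\mathcal S}(d_{\mathcal S}-1)$ is obtained exactly as in the paper.)

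The second bullet, however, has a genuine gap. You assert that after dividing by $H$ the Gauss-type condition has degree $\tilde d_{\mathcal S}-1=d_{\mathcal S}-d_H-1$. But $H$ is defined as the polynomial cutting out the two-dimensional part of the base locus of $\boldsymbol{\alpha}=\mathbf m\wedge\mathbf n_{\mathcal S}$, i.e.\ it divides the six wedge coordinates $xF_y-yF_x,\ xF_z-zF_x,\ tF_x,\ldots$; it need \emph{not} divide the three gradient entries $F_x,F_y,F_z$ themselves. The paper's proof splits into two cases: if $H$ divides $F_x,F_y,F_z$, your count is correct; otherwise $H=tH_1$ with only $H_1$ (of degree $d_H-1$) dividing the gradient, so that $n_{\mathcal S}=[F_x/H_1:F_y/H_1:F_z/H_1]$ and the reduced condition has degree $d_{\mathcal S}-d_H$, not $d_{\mathcal S}-d_H-1$. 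A uniform B\'ezout count as you propose would then give $b=d_{\mathcal S}(d_{\mathcal S}-d_H)$, which is wrong; the paper instead counts the affine part ($d_{\mathcal S}(d_{\mathcal S}-d_H)$), adds the contribution $d_{\mathcal S}$ of the points of $\mathcal S_\infty\cap\mathcal H$, and subtracts the excess intersection multiplicity --- shown to equal $2$ for generic $\mathcal H$ --- at each of the $d_{\mathcal S}$ points of $\mathcal S\cap\mathcal H^\infty\cap\mathcal H$, yielding $b=d_{\mathcal S}(d_{\mathcal S}-d_H)+d_{\mathcal S}-2d_{\mathcal S}=d_{\mathcal S}(d_{\mathcal S}-d_H-1)$. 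This second case is not a pathology you may ignore: it is precisely what happens for spheres and cones ($H=t$, while $t\nmid F_x$), which are the surfaces to which this bullet is actually applied in Section~\ref{secquadric}; for these your argument breaks down, and the stated coefficient is recovered only through the correction at infinity.
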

\begin{proof}
Since $\mathfrak{n}_{\mathcal{S}}\in A^{2}(\mathbb{G}(1,3)),$ we have $\mathfrak{n}_{\mathcal{S}
}=a.\sigma _{2}+b.\sigma _{1,1}$ for some integers $a$ and $b$.
Morever by Kleiman's transversality theorem (see for example \cite[Thm 5.20]{Eisenbud-Harris}), since $\Sigma _{1,1}:=\left\{ \mathcal{L}\in \mathbb{G}(1,3);\mathcal{L\subset P
}\right\} \in \mathcal{Z}^{2}(\mathbb{G}(1,3))$, we have $\mathfrak{n}_{\mathcal{S}
}\cdot \sigma _{1,1}=\left( a\sigma _{2}+b\sigma _{1,1}\right) \cdot \sigma_{1,1}$ and so, using
\eqref{EQSchubert}, we obtain
\begin{equation}
\mathfrak{n}_{\mathcal{S}}\cdot \sigma _{1,1}=b.\sigma _{1,1}^{2}=b.\sigma_{2,2}=b.
\end{equation}
Analogously, since
$\Sigma _{2}:=\left\{ \mathcal{L}\in \mathbb{G}(1,3);p\in \mathcal{L}
\right\} \in \mathcal{Z}^{2}(\mathbb{G}(1,3))$, due to \eqref{EQSchubert}, we have
\begin{equation}
\mathfrak{n}_{\mathcal{S}}\cdot \sigma
_{2}=\left( a\sigma _{2}^{{}}+b\sigma _{1,1}\right) \cdot \sigma _{2}=a\sigma _{2}^{2}=a\sigma_{2,2}=a .\end{equation}
Now it remains to compute $\mathfrak{n}_{\mathcal{S}}\cdot \sigma
_{2}$ and $\mathfrak{n}_{\mathcal{S}}\cdot \sigma
_{1,1}$, i.e. to compute the cardinality of the intersection of
$\mathfrak{N
}_{\mathcal{S}}$ with $\Sigma _{1,1}$ and with $\Sigma _{2}$.\\
Let us start with the computation of $a=\mathfrak{n}_{\mathcal {S}}\cdot \sigma_{2}$. 
If $\#\mathcal B_{\mathcal S}<\infty$, then, for a generic $P\in\mathbb P^3$, we have 
$$\mathfrak{N}_{\mathcal{S}}\cap \Sigma _{2}=\left\{ \mathcal{L}\in \mathfrak{N}
_{\mathcal{S}};P\in \mathcal{L}\right\} =\left\{\mathcal{N}_{m}\mathcal{S}
;m\in\mathcal S\setminus\mathcal B_{\mathcal S},\ P\in \mathcal{N}_{m}\mathcal{S}\right\}$$
and if $\dim \mathcal B^{(0)}_{\mathcal S}=2$ and $\#\tilde{\mathcal B}^{(0)}_{\mathcal Z}\cap{\mathcal S}<\infty$, then, for a generic $P\in\mathbb P^3$, we have 
$$\mathfrak{N}_{\mathcal{S}}\cap \Sigma _{2}=\left\{ \mathcal{L}\in \mathfrak{N}
_{\mathcal{S}};P\in \mathcal{L}\right\} =\left\{\mathcal{N}_{m}\mathcal{S}
;m\in\mathcal S\setminus\tilde{\mathcal B}^{(0)}_{\mathcal S},\ P\in \mathcal{N}_{m}\mathcal{S}\right\}.$$
So, in any case, $a=c_{\nu }(\mathcal S)$ by definition
of the normal class of $\mathcal S$.\\
Now, for $b$, since $\#\mathcal B_{\mathcal S}<\infty$, we note that, for a generic projective plane $\mathcal H\subset\mathbb P^3$, we have
$$\mathfrak{N}_{\mathcal{S}}\cap \Sigma _{1,1}=\left\{ \mathcal{L}\in \mathfrak{
N}_{\mathcal{S}};\mathcal{L\subset H}\right\} =\left\{ \mathcal{N}_{m}
\mathcal{S};\ m\in\mathcal S\setminus \mathcal B_{\mathcal S},\ \mathcal{N}_{m}\mathcal{S}\subset \mathcal{H}\right\} .$$
We have 
$\mathcal{H}=V(a_1 X+a_2 Y+a_3 Z+a_4 T)\subset \mathbb{P}^{3} $
for some complex numbers $a_1$, $a_2$, $a_3$ and $a_4$.
Let $m[x:y:z:t]\in \mathbb P^3$. For a generic $\mathcal H$, we have
\begin{eqnarray*}
m\in\mathcal S\setminus\mathcal B_{\mathcal S},\ \mathcal{N}_{m}\mathcal{S}\subset \mathcal{H}
&\Leftrightarrow & 
m\in\mathcal S\setminus\mathcal B_{\mathcal S},\ \ m\in\mathcal H,\ \ 
   n_{\mathcal S}(m)\in\mathcal H\\
&\Leftrightarrow&\left\{ 
\begin{array}{c}
F(x,y,z,t)=0 \\ 
a_1 F_{x}+a_2 F_{y}+a_3 F_{z}=0\\ 
a_1 x+a_2 y+a_3 z+a_4 t=0
\end{array}
\right. .
\end{eqnarray*}
Hence $b=d_{\mathcal S}(d_{\mathcal S}-1)$.
Assume now that $\dim\mathcal B^{(0)}_{\mathcal S}=2$ with two dimensional part $V(H)$ 
and $\#\tilde{\mathcal B}^{(0)}_{\mathcal S}\cap{\mathcal S}<\infty$.
For a generic projective plane
$\mathcal H=V(A^\vee)\subset \mathbb P^3$, we have 
\begin{eqnarray*}
\mathfrak{N}_{\mathcal{S}}\cap \Sigma _{1,1}&=&\left\{ \mathcal{N}_{m}
\mathcal{S};\ m\in\mathcal S\setminus \tilde{\mathcal B}^{(0)}_{\mathcal S},\ 
\mathcal{N}_{m}\mathcal{S}\subset \mathcal{H}\right\}\\
&=&\{\mathcal N_m\mathcal S;\ m\in\mathcal S\setminus\tilde{\mathcal B}^{(0)}_{\mathcal S},\ \ m\in\mathcal H,\ \ 
   n_{\mathcal S}(m)\in\mathcal H\}.
\end{eqnarray*}
Now there are two cases:
\begin{itemize}
\item If $H$ divides $F_x$, $F_y$ and $F_z$ and then
$n_{\mathcal S}=[\frac{F_x}H:\frac{F_y}H:\frac{F_z}H]$ and
$b=d_{\mathcal S}(d_{\mathcal S}-d_H-1)$.
\item Otherwise $H=tH_1$, with $H_1$ dividing  $F_x$, $F_y$ and $F_z$ and 
$V(X)\subset V(x{F_y}-y{F_x},x{F_z}-z{F_x},y{F_z}-z{F_y})$. Hence 
$n_{\mathcal S}=[\frac{F_x}{H_1}:\frac{F_y}{H_1}:\frac{F_z}{H_1}]$. We have
\[
m\in\mathcal S\setminus(\mathcal H^\infty\cup\tilde{\mathcal B}^{(0)}_{\mathcal S}),\ \mathcal{N}_{m}\mathcal{S}\subset \mathcal{H}
\quad\Leftrightarrow\quad \left\{ 
\begin{array}{c}
F(x,y,z,t)=0,\ \ t\ne 0 \\ 
a_1 \frac{F_{x}}{H_1}+a_2\frac{ F_{y}}{H_1}+a_3\frac {F_{z}}{H_1}=0\\ 
a_1 x+a_2 y+a_3 z+a_4 t=0
\end{array}
\right.
\]
and
\[
m\in\mathcal S_\infty\setminus\tilde{\mathcal B}^{(0)}_{\mathcal S},\ \mathcal{N}_{m}\mathcal{S}\subset \mathcal{H}
\quad\Leftrightarrow\quad \left\{ 
\begin{array}{c}
F(x,y,z,t)=0\\
t=0\\ 
a_1 x+a_2 y+a_3 z=0
\end{array}
\right. ,
\]
so 
$$b=d_{\mathcal S}(d_{\mathcal S}-d_H)+d_{\mathcal S}-
\sum_{P\in \mathcal S\cap\mathcal H^\infty\cap\mathcal H}i_P\left(\mathcal S,V(a_1 \frac{F_{x}}{H_1}+a_2\frac{ F_{y}}{H_1}+a_3\frac {F_{z}}{H_1}),\mathcal H\right)$$
(due to the Bezout Theorem). Now let $P\in \mathcal S\cap\mathcal H^\infty\cap\mathcal H$,
we have $x\ne 0$ or $y\ne 0$ or $z\ne 0$. Assume for example that $x\ne 0$, we have
\begin{eqnarray*}
&\ &i_P\left(\mathcal S,V(a_1 \frac{F_{x}}{H_1}+a_2\frac{ F_{y}}{H_1}+a_3\frac {F_{z}}{H_1}),\mathcal H\right)=\\
&=&i_P\left(\mathcal S,V(t(-a_4\frac{F_x}{H_1}+a_2\frac{xF_y-yF_x}H+a_3\frac{xF_z-zF_x}H),\mathcal H\right)\\
&=&1 +i_P\left(\mathcal S,V(-a_4\frac{F_x}{H_1}+a_2\frac{xF_y-yF_x}H+a_3\frac{xF_z-zF_x}H),\mathcal H\right)\\
&=&2
\end{eqnarray*}
for a generic $\mathcal H$ and so
$b=d_{\mathcal S}(d_{\mathcal S}-d_H+1)$.
\end{itemize}
\end{proof}
Theorem \ref{formulegeneralesurface} follows from Theorem \ref{formulegeneralehypersurface}
and Proposition \ref{PROP1}.
\begin{proof}[Proof of Theorem \ref{factorisable}]
If $\dim\mathcal B^{(0)}_{\mathcal S}=2$, we saw in Proposition \ref{rqebasepoints}
that we can adapt our study to compute the degree of the reduced
normal polar $\tilde {\mathcal P}_{A,\mathcal S}$
associated to the rational map $\tilde\alpha_{\mathcal S}:\mathbb P^3\setminus\tilde{\mathcal B}^{(0)}_{\mathcal S}
\rightarrow\mathbb P^3$ such that $\boldsymbol{\alpha}_{\mathcal S}=H\cdot\boldsymbol{\tilde\alpha}_{\mathcal S}$. 
Using Proposition
\ref{rqebasepoints} and following the proof of Theorem 
\ref{formulegeneralehypersurface},
we obtain Theorem \ref{factorisable}.
\end{proof}
\section{Proof of Theorem \ref{thmsurfaces}}\label{sec:proofthm1}
We apply Theorem \ref{formulegeneralesurface}. 
Note that, since $\mathcal S$ is smooth, it has only a finite number of points of tangency with $\mathcal H_\infty$ (due to Zak's theorem on tangencies \cite[corolloray 1.8]{Zak}).
Since the surface is smooth, $\mathcal B_{\mathcal S}$ consists of points of tangency of 
$\mathcal S$ with $\mathcal H_\infty$ and of points of tangency of
$\mathcal S_\infty$ with $\mathcal U_\infty$. It remains to compute
the intersection multiplicity of $\mathcal S$ with a generic normal polar at these points.
Let us recall that if $A\not\in\mathcal H^\infty$, then
$$i_P(\mathcal S,\mathcal P_{A,\mathcal S})=\dim_\mathbb C \left(\left(
        \mathbb C[x,y,z,t]/I\right)_P \right)$$
where $I$ is the ideal $(F,E_{A,1,2}E_{A,1,3},E_{A,2,3})$ of $\mathbb C[x,y,z,t]$,
with the notation $E_{A,i,j}$ introduced in the proof of Proposition \ref{degrepolaire}.

To compute these quantities, it may be useful to make an appropriate change of coordinates with the use of a projective similitude of $\mathbb P^3$.
Note that:
\begin{itemize}
\item[*] The umbilical $\mathcal U_\infty$ is stable under the action
of the group of projective similitudes of $\mathbb P^3$.
\item[*] For any $P\in\mathcal H_\infty\setminus\mathcal U_\infty$,
there exists a projective similitude $\zeta$ of $\mathbb P^3$
mapping $[1:0:0:0]$ to $P$.
\footnote{Let $P[x_0:y_0:z_0:0]$ with $x_0^2+y_0^2+z_0^2=1$.
Assume for example that $z_0^2\ne 1$ (up to a permutation of the coordinates) and take $\zeta$
given by $\kappa'(b,A)$ (for any $b\in\mathbb C^n)$ with
$A=(u\, v\, w)$ where $u=(x_0,y_0,z_0)$ and $v=(x_0^2+y_0^2)^{-\frac 12}(y_0,-x_0,0)$ and $w=(x_0^2+y_0^2)^{-\frac 12}(x_0z_0,y_0z_0,-x_0^2-y_0^2)$.} 
\item[*] For any $P\in\mathcal U_\infty$,
there exists a projective similitude $\zeta$ of $\mathbb P^3$
mapping $[1:i:0:0]$ to $P$.
\footnote{Let $P[x_0:y_0:z_0:0]\in\mathcal U_\infty$.
Assume for example that $y_0\ne 0$ and $x_0^2+y_0^2\ne 0$
(up to a composition by a permutation matrix). A suitable $\zeta$ is given by $\kappa'(b,A)$ (for any $b\in\mathbb C^n)$ with
$A=\left(\begin{array}{ccc}\frac{x_0(y_0^2-1)}{2y_0^2}&-\frac{ix_0(1+y_0^2)}{2y_0^2}&\frac{\sqrt{x_0^2+y_0^2}}{y_0}\\
\frac{1+y_0^2}{2y_0^2}&\frac{i(1-y_0^2)}{y_0}&0\\
\frac{i(y_0^4+y_0^2x_0^2-y_0^2-x_0^2)}{y_0^2\sqrt{x_0^2+y_0^2}}&\frac{(1+y_0^2)\sqrt{x_0^2+y_0^2}}
    {2y_0^2}&\frac{ix_0}{y_0}\end{array}\right)$.} 

\end{itemize}
We recall that a multiple point of order $k$ of a plane curve
is ordinary if its tangent cone contains $k$ pairwise distinct lines
and that an ordinary cusp of a plane curve is a double point with a single
tangent line in the tangent cone, this tangent line being non contained in the cubic cone of the curve at this point.
\begin{itemize}
\item \underline{Let $P$ be a (non singular) point of tangency of $\mathcal S$ with
$\mathcal H_\infty$.}

We prove the following:
\begin{itemize}
\item[(a)] $i_P(\mathcal S,\mathcal P_{A,\mathcal S})=k^2$  for a generic $A\in\mathbb P^3 $  if
$P$ is an ordinary multiple point of order $k+1$ of $\mathcal S_\infty\setminus \mathcal U_\infty$.
\item[(b)] $i_P(\mathcal S,\mathcal P_{A,\mathcal S})=k(k+1)$ for a generic $A\in\mathbb P^3$ if 
$P$ is an ordinary multiple point of order $k+1$ of $\mathcal S_\infty$, which belongs to $\mathcal U_\infty$ and at which the tangent line to $\mathcal U_\infty$ is not contain in the tangent cone
of $\mathcal S_\infty$.
\item[(c)] $i_P(\mathcal S,\mathcal P_{A,\mathcal S})=3$ for a generic $A\in\mathbb P^3$ if $P$ is an ordinary cusp of $\mathcal S_\infty$, which belongs to $\mathcal U_\infty$ and at which the tangent line to $\mathcal U_\infty$ is not contain in the tangent cone
of $\mathcal S_\infty$.
\item[(d)] $i_P(\mathcal S,\mathcal P_{A,\mathcal S})=2$ for a generic $A\in\mathbb P^3$ if $P$ is an ordinary cusp of
$\mathcal S_\infty\setminus \mathcal U_\infty$.
\end{itemize}
Due to Lemma \ref{preservpolar}, 
we assume that $P[1:\theta:0:0]$ with $\theta=0$ (if $P\in\mathcal H_\infty\setminus\mathcal U_\infty$) or $\theta=i$ (if $P\in\mathcal U_\infty$). Since $\mathcal T_P\mathcal S=\mathcal H_\infty$, we suppose that $F_x(P)=F_y(P)=F_z(P)=0$
and $F_t(P)=1$ (without any loss of generality). 
Recall that the Hessian determinant $H_F$ of 
$F$ satisfies\footnote{see for example \cite{fredsoaz3}.}
$$H_F=\frac{(d_{\mathcal S}-1)^2}{x^2}\left|\begin{array}{cccc}
        0&F_{y}&F_{z}&F_{t}\\
        F_{y}&F_{yy}&F_{yz}&F_{yt}\\
        F_{z}&F_{yz}&F_{zz}&F_{zt}\\
        F_{t}&F_{yt}&F_{zt}&F_{tt}
\end{array}
        \right|.$$
Hence $H_F(P)\ne 0\ \Leftrightarrow\ [F_{yy}F_{zz}-F_{yz}^2](P)\ne 0$.
For a generic $A\in\mathbb P^3$, we have
$$\left(\frac{\mathbb C[x,y,z,t]}{I}\right)_P \cong
    \left(\frac{\mathbb C[x,y,z,t]}{(F,A_2,A_3)}\right)_P.$$
Recall that $A_2=atF_z-ctF_x+d(zF_x-xF_z)$
and
$A_3=-atF_y+btF_x+d(xF_y-yF_x)$
(with $A[a:b:c:d]$).
Using the Euler identity $xF_x+yF_y+zF_z+tF_t=d_{\mathcal S}F$,
we obtain that
$$\left(\frac{\mathbb C[x,y,z,t]}{I}\right)_P 
\cong
   \left(\frac{\mathbb C[x,y,z,t]}{(F,A'_2,A'_3)}\right)_P
\cong \left(\frac{\mathbb C[y,z,t]}{(F_*,A'_{2*},A'_{3*})}
  \right)_{(0,0,0)}$$
with 
$$A'_2:=atxF_z+ct(yF_y+zF_z+tF_t)-d(z(yF_y+zF_z+tF_t)+x^2F_z),
$$
$$A'_3:=-atxF_y-bt(yF_y+zF_z+tF_t)+d(x^2F_y+y(yF_y+zF_z+tF_t))$$
and with $G_{*}(y,z,t):=G(1,\theta+y,z,t)$  for any homogeneous $G$. 
In a neighbourhood of  $(0,0,0)$, $V(F_*)$ is given by $t=\varphi(y,z)$ with $\varphi(y,z)\in\mathbb C[[y,z]]$ and
\begin{equation}\label{deriveephi}
\varphi_y(y,z)=-\frac{F_y(1,\theta+y,z,\varphi(y,z))}{F_t(1,\theta+y,z,\varphi(y,z))}\quad\mbox{and}\quad\varphi_z(y,z)=-\frac{F_z(1,\theta+y,z,\varphi(y,z))}{F_t(1,\theta+y,z,\varphi(y,z))}.
\end{equation}
So
$$\left(\frac{\mathbb C[x,y,z,t]}{I}\right)_P
  \cong \frac{\mathbb C[[y,z]]}{(A'_{2**},A'_{3**})},$$
with 
$G_{**}(y,z):=G(1,\theta+y,z,\varphi(y,z))$. Now due to
\eqref{deriveephi}, we have
$$H:=A'_{2**}=(F_t)_{**} [-a\varphi\varphi_z+c\varphi(\varphi-(\theta+y)\varphi_y-z\varphi_z)+d(z((\theta+y)\varphi_y+z\varphi_z-\varphi)+\varphi_z)]$$
and
$$K:=A'_{3**}=(F_t)_{**} [a\varphi\varphi_y-b\varphi(\varphi-(\theta+y)\varphi_y-z\varphi_z)-d(\varphi_y+(\theta+y)((\theta+y)\varphi_y+z\varphi_z-\varphi))].$$
Hence
$$i_P(\mathcal S,\mathcal P_{A,\mathcal S})=i_{(0,0)}(\Gamma_H,\Gamma_K),$$
where $\Gamma_H$ and $\Gamma_K$ are the analytic plane curves
of respective equations $H$ and $K$ of $\mathbb C[[y,z]]$.

Note that $(H_y(0,0),H_z(0:0))=d(\varphi_{yz}(0,0),\varphi_{zz}(0,0))$. 
Analogously, we obtain $(K_y(0,0),K_z(0,0))=-d(1+\theta^2)(\varphi_{y,y}(0,0),\varphi_{yz}(0,0))$.
\begin{itemize}
\item[(a)] If $P\not\in\mathcal U_\infty$ and if
$P$ is an ordinary multiple point of order $k+1$ of $\mathcal S_\infty$, with our change of coordinates
we have $P[1:0:0:0]$ (i.e. $\theta=0$) and $V((\varphi_{k+1})_y)$
and $V((\varphi_{k+1})_z)$ have no common lines.\footnote{Recall that the tangent cone $V(\varphi_{k+1})$ of $\Gamma_{\varphi}$ at $(0,0)$
(corresponding to the tangent cone of $V(\mathcal S_\infty)$ at $P$)
has pairwise distinct tangent lines if and only if 
$V((\varphi_{k+1})_y)$
and $V((\varphi_{k+1})_z)$ have no common lines.
}
Then the first homogeneous parts of $H$ and $K$ have order $k$
and are  $H_k=d(\varphi_{k+1})_z$ and $K_k=-d(\varphi_{k+1})_y$
respectively.
Since $\Gamma_{H_k}$
and $\Gamma_{K_k}$ have no common lines, 
we conclude that $i_P(\mathcal S,\mathcal P_{A,\mathcal S})=k^2$.
\item[(b)]
Assume now that $P\in\mathcal U_\infty$ 
is an ordinary multiple point of $\mathcal S_\infty$, which belongs to $\mathcal U_\infty$ and at which the tangent line to $\mathcal U_\infty$ is not contain in the tangent cone
of $\mathcal S_\infty$.
With our changes of coordinates, 
this means that $P[1:i:0:0]$ (i.e. $\theta=i$) that 
$y$ does divide $\varphi_{k+1}$ (since $V(y)$ is the tangent line
to $\mathcal U_\infty$ at $P$) and that $V((\varphi_{k+1})_y)$
and $V((\varphi_{k+1})_z)$ have no common lines.

Note that the first homogeneous parts of $H$ and $K$ have respective orders $k$ and $k+1$ and are respectively
$H_k=d(\varphi_{k+1})_z$ and 
\begin{eqnarray*}
K_{k+1}&=&-di[2y(\varphi_{k+1})_y+z(\varphi_{k+1})_z-\varphi_{k+1}]\\
&=&-di\left[\left(2-\frac 1{k+1}\right) y(\varphi_{k+1})_y+\left(1-\frac 1{k+1}\right)z(\varphi_{k+1})_z\right],
\end{eqnarray*}
due to the Euler identity applied to $\varphi_{k+1}$.
Hence $i_P(\mathcal S,\mathcal P_{A,\mathcal S})=k(k+1)$ if
$V((\varphi_{k+1})_z)$ and $V(y(\varphi_{k+1})_y)$ have no common lines, which is true since $V((\varphi_{k+1})_z)$ and $V((\varphi_{k+1})_y)$ have no common lines and since $y$ does not divide $(\varphi_{k+1})_z$.
\item[(c)] Assume that $P\in\mathcal U_\infty$ is an ordinary cusp (of order 2)
of $\mathcal S_\infty$, which belongs to $\mathcal U_\infty$ and at which the tangent line to $\mathcal U_\infty$ is not contain in the tangent cone
of $\mathcal S_\infty$. With our changes of coordinates, 
this means that $P[1:i:0:0]$ (i.e. $\theta=i$), that 
$H_F(P)=0$ and $F_{zz}(P)\ne 0$ (since $V(y)$ is the tangent line
to $\mathcal U_\infty$ at $P$).

Note that $H=-d(F_{yz}(P)y+F_{zz}(P)z)+...$. In a neighbourhood of $(0,0)$, $H(y,z)=0\ \Leftrightarrow\ z=h(y)$
with

$h(y)=-\frac{F_{yz}(P)}{F_{zz}(P)}y-\frac{ F_{zzz}(P)F^2_{yz}(P)
  -2F_{yzz}(P)F_{yz}(P)F_{zz}(P)+F_{yyz}(P)F^2_{zz}(P)}{F_{zz}^3(P)
    }y^2+...$

and we obtain that $\val_yK(y,h(y))=3$ if
$P\not\in V(F_{yyy}F_{zz}^3-3F_{yyz}F_{zz}^2F_{yz}
+3F_{yzz}F_{zz}F_{yz}^2-F_{zzz}^3F_{yz}^3)$ which means that
the line $V(F_{yz}(P)y+F_{zz}(P)z)$ (corresponding to the
tangent cone of $V(F(1,y,z,0))$) is not contained
in the cubic cone of $V(F(1,y,z,0))$. Hence 
$i_P(\mathcal S,\mathcal P_{A,\mathcal S})=3$ if the node $P$ 
of $\mathcal S_\infty$ is ordinary.
\item[(d)] Assume now that $P$ is an ordinary cusp (of order 2) of
$\mathcal S_\infty\setminus \mathcal U_\infty$.

With our change of coordinates, this means that $P[1:0:0:0]$
(i.e. $\theta=0$), that $H_F(P)=0$ and  $P\not\in V(F_{yy},F_{yz},F_{zz})$. This implies that 
$F_{yy}(P)\ne 0$ or $F_{zz}(P)\ne 0$.

If $F_{yy}(P)\ne 0$, the tangent line to $\mathcal S_\infty$
at $P$ is given by $V(t,F_{yy}(P)y+F_{yz}(P)z)$.

If $F_{zz}(P)\ne 0$,  the tangent line to $\mathcal S_\infty$
at $P$ is given by $V(t,F_{zz}(P)z+F_{yz}(P)y)$.

The fact that the cusp is ordinary implies also that the tangent line
is not contained in the cubic cone of $V(F(1,y,z,0))$, i.e. this tangent line is not contained $V(F_{yyy}(P)y^3+3F_{yyz}(P)y^2z
+3F_{yzz}(P)yz^2+F_{zzz}(P)z^3](P)$.

Hence, we have either
$$F_{yy}[F_{yyy}F_{yz}^3-3F_{yyz}
F_{yz}^2F_{yy}+3F_{yzz}F_{yz}F_{yy}^2-F_{zzz}F_{yy}^3](P)\ne 0$$
or
$$F_{zz}[F_{zzz}F_{yz}^3-3F_{yzz}
F_{yz}^2F_{zz}+3F_{yyz}F_{yz}F_{zz}^2-F_{yyy}F_{zz}^3](P)\ne 0.$$
Note that, if $F_{yy}(P)$ and $F_{zz}(P)$ are both non null,
these two conditions are equivalent.

Assume for example that the first condition holds.
In a neighbourhood of $(0,0)$, $H(y,z)=0\,\Leftrightarrow\, y=h(z)$
and $K(y,z)=0\, \Leftrightarrow\, y=k(z)$, with
$$h'(z)=-\frac{H_z(h(z),z)}{H_y(h(z),z)}\quad\mbox{and}\quad
   k'(z)=-\frac{K_z(h(z),z)}{K_y(h(z),z)}.$$
Hence we have
$$\left(\frac{\mathbb C[x,y,z,t]}{I}\right)_P
  \cong \frac{\mathbb C[[y,z]]}{(y-h(z),y-k(z))} 
   \cong \frac{\mathbb C[[z]]}{((h-k)(z))} .$$
We have $h'(0)=k'(0)=-\varphi_{yz}(0,0)/\varphi_{yy}(0,0)$ and

$(h''-k'')(0)=\left[\varphi_{yy}^3\varphi_{zy}
\left(\varphi_{yyy}\varphi_{yz}^3-3\varphi_{yyz}\varphi_{yz}^2
   \varphi_{yy}+3\varphi_{yzz}\varphi_{yy}^2
   \varphi_{yz}-\varphi_{zzz}\varphi_{yy}^3\right)\right](0,0)$
\end{itemize}
Hence $i_P(\mathcal S,\mathcal P_{A,\mathcal S})=\dim_{\mathbb C}\frac{\mathbb C[[z]]}{((h-k)(z))}=2$.

\item \underline{Let $P$ be a simple (non singular) point of tangency of $\mathcal S_\infty$
with $\mathcal U_\infty$.} 

Let us prove that 
$i_P(\mathcal S,\mathcal P_{A,\mathcal S})=1$.
Due to Lemma \ref{preservpolar}, 
we can assume that $P=[1:i:0:0]$ (i.e. $\theta=i$) and that $F_t(P)= 1$.
As previously, we note that
$$\left(\frac{\mathbb C[x,y,z,t]}{I}\right)_P
  \cong \frac{\mathbb C[[y,z]]}{(A_{1**},A'_{3**})},$$
with
$$A_{1**}(y,z)=(F_t)_{**}[\varphi(b\varphi_z-c\varphi_y)-d((y+i)\varphi_z-z\varphi_y)]$$
and
$$A_{3**}=(F_t)_{**} [a\varphi\varphi_y-b\varphi(\varphi-(\theta+y)\varphi_y-z\varphi_z)-d(\varphi_y+(i+y)((i+y)\varphi_y+z\varphi_z-\varphi))].$$

The fact that $P$ is a simple contact point of $\mathcal S_\infty$
with $\mathcal U_\infty$ implies that $[F_x(P):F_y(P):F_z(P)]=[1:i:0]$ and that $\varphi_{zz}(0,0)\ne 1$.
Indeed $V(t,F(1,i+y,z,t))$
is given by $t=0,\, y=g(z)$ with $g(0)=0$ and $g'(z)=-\varphi_z(g(z),z)/\varphi_y(g(z),z)$ (in particular
$g'(0)=0$), so
$$\frac{\mathbb C[[y,z]]}{(y-g(z),1+(i+y)^2+z^2)}\cong
   \frac{\mathbb C[[z]]}{(1+(i+g(z))^2+z^2)}
$$
and finally
$$
i_P(\mathcal S_\infty,\mathcal U_\infty)=
     \val_z(1+(i+g(z))^2+z^2)=1+\val_z((i+g(z))g'(z)+z)
$$
which is equal to 2 if and only if  $\varphi_{zz}(0,0)\ne 1$.

In a neighbourhood of $(i,0)$, $A_{1**}$ can be rewritten
$\varphi-\kappa$ with $\kappa=d((y+i)\varphi_z-z\varphi_y)/(b\varphi_z-c\varphi_y)$.
Since $ \varphi_{zz}(0)\ne 1$,  $\kappa_z(0,0)\ne 0$
and, in a neighbourhood of $0$, $\varphi-\kappa=0$ corresponds to $y=h(z)$
with $h'(0)\ne 0$ (recall that $\varphi_y(0)=i\ne 0$ and that $A$ is generic) which gives
$$\left(\frac{\mathbb C[x,y,z,t]}{I}\right)_P
  \cong 
  \frac{\mathbb C[[y,z]]}{(y-h(z),A'_{3**})}$$
and finally $i_P(\mathcal S ,\mathcal P_{A,\mathcal S})=\val_z
A'_{3**}(h(z),z)=1$.
\end{itemize}
\section{Examples in $\mathbb P^3$}
\subsection{Normal class of quadrics}\label{secquadric}
The aim of the present section is the study of the normal class
of every irreducible quadric. Let $\mathcal S=V(F)\subset\mathbb P^3$ be an irreducible quadric.
We recall that, up to a composition by
$\varphi\in \widehat{Sim_{\mathbb C}(3)}$, one can suppose that $F$
has the one of the following forms:
\begin{eqnarray*}
&(a)& F(x,y,z,t)=x^2+\alpha y^2+\beta z^2 +t^2\\
&(b)& F(x,y,z,t)=x^2+\alpha y^2+\beta z^2\\
&(c)& F(x,y,z,t)=x^2+\alpha y^2-2tz\\
&(d)& F(x,y,z,t)=x^2+\alpha y^2+t^2,
\end{eqnarray*}
with $\alpha,\beta$ two non zero complex numbers.
Spheres, ellipsoids and hyperboloids are particular cases of (a),
paraboloids (including the saddle surface) are particular cases of (c), (b) correspond to cones and (d) to cylinders.

We will see, in Appendix \ref{cylindreetrevolution}, that in the case (d)
(cylinders) and in the cases (a) and (b) with $\alpha=\beta=1$, the normal class of the quadric is naturally related to the normal class of a conic.
\begin{prop}\label{quadric}
The normal class of a sphere is 2.

The normal class of a quadric $V(F)$ with $F$ given by (a) is 6 if $1,\alpha,\beta$ are pairwise distinct.

The normal class of a quadric $V(F)$ with $F$ given by (a) is 4 if $\alpha=1\ne\beta$.

The normal class of a quadric $V(F)$ with $F$ given by (b) is 4 if $1,\alpha,\beta$ are pairwise distinct.

The normal class of a quadric $V(F)$ with $F$ given by (b) is 2 if $\alpha=1\ne\beta$.

The normal class of a quadric $V(F)$ with $F$ given by (b) is 0 if $\alpha=\beta=1$.

The normal class of a quadric $V(F)$ with $F$ given by (c) is 5 if $\alpha\ne 1$ and 3 if $\alpha=1$.

The normal class of a quadric $V(F)$ with $F$ given by (d) is 4 if
$\alpha\ne 1$ and 2 if $\alpha=1$.
\end{prop}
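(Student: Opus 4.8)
Every quadric has $d_{\mathcal S}=2$, so the main term in the formula to be subtracted from is $d_{\mathcal S}(d_{\mathcal S}^2-d_{\mathcal S}+1)=6$ whenever $\dim\mathcal B\le 1$, while for the sphere and the isotropic cone (the only quadrics with $\dim\mathcal B=2$, by Example \ref{exemple1}) we take $H=t$, so that $\tilde d_{\mathcal S}=1$ and the main term is $d_{\mathcal S}(\tilde d_{\mathcal S}^2-\tilde d_{\mathcal S}+1)=2$. The plan is to run, normal form by normal form, the subtraction prescribed by Theorem \ref{formulegeneralesurface} (resp.\ Theorem \ref{factorisable}): first locate $\mathcal B_{\mathcal S}$ (resp.\ $\tilde{\mathcal B}\cap\mathcal S$) via the description $\mathcal B_{\mathcal S}=\sing(\mathcal S)\cup\mathcal K_\infty(\mathcal S)\cup\mathcal U_\infty(\mathcal S)$, then compute $i_P(\mathcal S,\mathcal P_{A,\mathcal S})$ for generic $A\in\mathbf V^\vee$ at each such $P$, and subtract. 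The common input is the mutual position of $\mathcal S_\infty=V(t,x^2+\alpha y^2+\beta z^2)$ (or $V(t,x^2+\alpha y^2)$ in the parabolic and cylindrical cases) and the umbilical $\mathcal C_\infty=V(t,x^2+y^2+z^2)$: the pencil $\mathcal S_\infty-\lambda\,\mathcal C_\infty$ degenerates exactly for $\lambda\in\{1,\alpha,\beta\}$, so the two conics meet transversally in four points when $1,\alpha,\beta$ are pairwise distinct (hence $\mathcal U_\infty=\emptyset$), and acquire tangencies precisely when two of $1,\alpha,\beta$ coincide.

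The smooth forms, namely (a) with $(\alpha,\beta)\neq(1,1)$ and (c), can be handled directly by Theorem \ref{thmsurfaces}. For (a) the surface is smooth with $V(F_x,F_y,F_z)\cap\mathcal S=\emptyset$, so there is no contact at infinity and only umbilical contacts count: none when $1,\alpha,\beta$ are distinct, giving $c_\nu=6$; and two ordinary contacts at $[1:\pm i:0:0]$ when $\alpha=1\neq\beta$ (there $\mathcal S_\infty\cap\mathcal C_\infty\subset V(z)$ with contact order $2$), giving $c_\nu=6-c_\infty=4$. For (c) the point $[0:0:1:0]$ is always an ordinary node of $\mathcal S_\infty$ lying off $\mathcal C_\infty$ (a contact at infinity), contributing $1$; the two lines of $\mathcal S_\infty$ are tangent to $\mathcal C_\infty$ iff $\alpha=1$, yielding two further ordinary contacts. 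The formula of Theorem \ref{thmsurfaces} then gives $c_\nu=8-4+2-1=5$ for $\alpha\neq1$ and $c_\nu=6-1-2=3$ for $\alpha=1$. In each case one must check that this singular point of $\mathcal S_\infty$ is an ordinary node and that the contacts are ordinary, so hypotheses (ii)--(iv) of Theorem \ref{thmsurfaces} hold.

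The remaining forms are singular or maximally degenerate. For the general cone (b) with $(\alpha,\beta)\neq(1,1)$ the apex $[0:0:0:1]$ is an isolated singularity and $\dim\mathcal B\le1$, so Theorem \ref{formulegeneralesurface} applies with main term $6$; one computes $\mathcal B_{\mathcal S}=\{[0:0:0:1]\}$ when $1,\alpha,\beta$ are distinct, and $\mathcal B_{\mathcal S}=\{[0:0:0:1]\}\cup\{[1:\pm i:0:0]\}$ when $\alpha=1\neq\beta$, so that $c_\nu=6-2=4$ resp.\ $c_\nu=6-2-1-1=2$ once the apex multiplicity is known. The cylinder (d) has $F_z\equiv0$, and $[0:0:1:0]$ is an isolated singularity of $\mathcal S$ with $\dim\mathcal B\le1$; it carries two extra umbilical contacts exactly when $\alpha=1$, giving $c_\nu=6-2=4$ for $\alpha\neq1$ and $c_\nu=6-2-1-1=2$ for $\alpha=1$. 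Finally, for the sphere and the isotropic cone one has $\boldsymbol\alpha=-2t\,(0,0,x,0,y,z)$, so Theorem \ref{factorisable} applies with $H=t$ and $\boldsymbol{\tilde\alpha}=(0,0,x,0,y,z)$; the reduced polar $\tilde{\mathcal P}_{A,\mathcal S}$ is a line (Example \ref{exemple1}), $\tilde{\mathcal B}=\{[0:0:0:1]\}$, and this point lies off the sphere (whence $\tilde{\mathcal B}\cap\mathcal S=\emptyset$ and $c_\nu=2$) but is the apex on the isotropic cone (whence $c_\nu=2-i_{\text{apex}}$).

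\medskip
\noindent\textbf{Main obstacle.} The essential computations are the local intersection multiplicities of $\mathcal S$ with its generic normal polar at the \emph{singular} points, which lie outside the scope of Theorem \ref{thmsurfaces}: at the cone apex and the cylinder vertex I expect $i_P(\mathcal S,\mathcal P_{A,\mathcal S})=2$, and likewise $i_{\text{apex}}(\mathcal S,\tilde{\mathcal P}_{A,\mathcal S})=2$ for the isotropic cone (forcing $c_\nu=0$). These I would obtain by writing out the explicit generators $A\circ\nu_i$ of the polar in affine coordinates centered at the point, after a normalizing projective similitude (Lemma \ref{preservpolar}), and computing $\dim_{\mathbb C}\bigl(\mathbb C[x,y,z,t]/(F,A_1,A_2,A_3)\bigr)_P$ for generic $[a:b:c:d]=\delta(A)$; the reduction \eqref{polaireinter3} to three equations together with the degree statements of Propositions \ref{degrepolaire} and \ref{rqebasepoints} keeps this tractable. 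The subsidiary difficulty is the bookkeeping of the tangency orders of $\mathcal S_\infty$ with $\mathcal C_\infty$ as $1,\alpha,\beta$ specialize, i.e.\ verifying that each coincidence among $1,\alpha,\beta$ produces exactly two \emph{ordinary} contacts (each contributing $1$) rather than a higher-order or non-ordinary contact, which is what validates the use of Theorem \ref{thmsurfaces} in the smooth cases and keeps all the subtractions correct.
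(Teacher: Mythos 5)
Your proposal is correct: every base locus you list matches the paper's ($\mathcal B_{\mathcal S}=\emptyset$ for (a) with $1,\alpha,\beta$ distinct; $\{[1:\pm i:0:0]\}$ for (a) with $\alpha=1\ne\beta$; apex and/or umbilic points for (b); $[0:0:1:0]$ and, when $\alpha=1$, $[1:\pm i:0:0]$ for (c); $\tilde{\mathcal B}\cap\mathcal S=\emptyset$ resp.\ the apex for the sphere and isotropic cone), and the multiplicities you announce but defer ($2$ at the cone apex, the cylinder vertex and the isotropic-cone apex; $1$ at each simple umbilical contact) are exactly the values the paper obtains, so all eight counts come out right. But your route differs from the paper's in two places. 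For the smooth forms (a) and (c) the paper does \emph{not} invoke Theorem \ref{thmsurfaces}: it works directly from Theorem \ref{formulegeneralesurface}, exhibiting explicit local parametrizations of the generic normal polar near each point of $\mathcal B_{\mathcal S}$ (e.g.\ $\psi(y)=[1:\pm i+y:0:0]$ at the umbilical contacts, and \eqref{parametrisation2} at the node of case (c)) and reading off $i_P(\mathcal S,\mathcal P_{A,\mathcal S})$ as a valuation; you instead feed the singularity and contact data of $\mathcal S_\infty$ into the closed formula of Theorem \ref{thmsurfaces}. Your way is shorter given that Theorem \ref{thmsurfaces} is already proved, but it obliges you to verify hypotheses (ii)--(iv) (ordinariness of the node of $V(t,x^2+\alpha y^2)$ and of the tangencies with $\mathcal C_\infty$), which you correctly flag as residual work, while the paper's parametrizations bypass these checks. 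More substantially, for the cylinders (d) the paper computes no polars at all: it invokes Proposition \ref{propcylinder} ($c_\nu(\mathcal S)=c_\nu(\mathcal C)$ for the base conic) and then Theorem \ref{thmcurves}, getting $2+2=4$ for the ellipse and $2+2-1-1=2$ for the circle; your direct computation ($6-2$ at the vertex, minus $1+1$ at the umbilical contacts when $\alpha=1$) is a valid alternative --- at the vertex, in the chart $z=1$, the generic polar ideal reduces modulo units to $(x,y,t^2)$, of length $2$, confirming your expected value --- but the planar reduction is precisely what the appendix was built for and spares you checking the hypotheses of Theorem \ref{formulegeneralesurface} on a singular surface. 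For the cones (b), the sphere and the isotropic cone your plan coincides with the paper's; the apex multiplicities you leave as ``expected'' are settled there by exactly the method you sketch, via the one-branch parametrizations \eqref{parametrisation1} and $\psi(x)=[x:bx/a:cx/a:1]$, each giving $\val_x F(\psi(x))=2$.
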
 
\begin{coro}
The normal class of the saddle surface $\mathcal S_1=V(xy-zt)$ is 5.

The normal class of the ellipsoid $\mathcal E_1=V(x^2+2y^2+4z^2-t^2)$ with three different length of axis is 6.

The normal class of the ellipsoid $\mathcal E_2=V(x^2+4y^2+4z^2-t^2)$ with two different length of axis is 4.
\end{coro}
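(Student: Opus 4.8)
The plan is to apply Theorem \ref{formulegeneralesurface} (when $\#\mathcal B_{\mathcal S}<\infty$) or Theorem \ref{factorisable} (when $\dim\mathcal B=2$, i.e.\ for spheres and cones, as identified in Example \ref{exemple1}) to each of the four normal forms $(a)$--$(d)$ in turn. Since every quadric has $d_{\mathcal S}=2$, the ``generic'' count in Theorem \ref{formulegeneralesurface} is $d_{\mathcal S}(d_{\mathcal S}^2-d_{\mathcal S}+1)=2\cdot 3=6$, and in the factorisable case (with $H=t$, so $d_H=1$ and $\tilde d_{\mathcal S}=1$) it is $d_{\mathcal S}(\tilde d_{\mathcal S}^2-\tilde d_{\mathcal S}+1)=2\cdot 1=2$. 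The normal class is then obtained by subtracting $\sum_{P}i_P(\mathcal S,\mathcal P_{A,\mathcal S})$ over the relevant base points. So the entire computation reduces to (i) locating $\mathcal B_{\mathcal S}$ for each case and each value of the parameters, and (ii) computing the local intersection multiplicities there.

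First I would compute $\mathcal S_\infty=\mathcal S\cap\mathcal H^\infty$ and its intersection with the umbilical $\mathcal C_\infty=V(t,x^2+y^2+z^2)$ for each form. For $(a)$, $\mathcal S_\infty=V(t,x^2+\alpha y^2+\beta z^2)$ is a smooth conic, so $\sing(\mathcal S)$ and the contact-at-infinity set are empty, and $\mathcal B_{\mathcal S}=\mathcal U_\infty(\mathcal S)$ consists of the contact points of two conics in the plane $\mathcal H^\infty$; these are governed by the coincidences among $1,\alpha,\beta$. When $1,\alpha,\beta$ are pairwise distinct the two conics meet transversally (no contact), giving $c_\nu=6$; when $\alpha=1\neq\beta$ the conics become tangent and one expects two simple contact points, each contributing $i_P=1$ by the ``simple contact'' computation at the end of Section \ref{sec:proofthm1}, giving $c_\nu=6-2=4$. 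The sphere case $\alpha=\beta=1$ has $\dim\mathcal B=2$ and is handled by Theorem \ref{factorisable}, yielding the base value $2$ minus any residual multiplicity; I would check directly that $c_\nu=2$. For $(b)$ (cones) the vertex is a singular point, so $\dim\mathcal B=2$ again (via Example \ref{exemple1}), and I would apply Theorem \ref{factorisable}, tracking the umbilical contacts among $1,\alpha,\beta$ exactly as in $(a)$ but starting from the base value $2$ and accounting for the singular vertex, to recover $4$, $2$, $0$. For $(c)$ (paraboloids) the quadric is tangent to $\mathcal H^\infty$, so there is a contact point at infinity in $\mathcal K_\infty(\mathcal S)$ whose contribution I would compute via the ordinary-multiple-point formula $i_P=k^2$ or $k(k+1)$ from cases (a),(b) of Section \ref{sec:proofthm1}, together with possible umbilical contacts depending on $\alpha=1$ or $\alpha\neq1$. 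For $(d)$ (cylinders) there is a line at infinity in the base locus, so again $\dim\mathcal B$ is large; I would use Theorem \ref{factorisable} with $H=t$ and the analysis of the base curve.

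The key mechanical tool throughout is the identity from \eqref{polaireinter3} expressing $i_P(\mathcal S,\mathcal P_{A,\mathcal S})$ as $\dim_{\mathbb C}(\mathbb C[x,y,z,t]/(F,A_1,A_2,A_3))_P$, reduced after the Euler-identity simplifications of Section \ref{sec:proofthm1} to a local intersection number of two plane analytic branches in $\mathbb C[[y,z]]$. For each base point I would normalize coordinates by a projective similitude (Lemma \ref{preservpolar}) so that $P=[1:\theta:0:0]$ with $\theta=0$ or $\theta=i$, then read off the tangent-cone data of $\mathcal S_\infty$ and apply the relevant one of the four local computations (a)--(d) already established, or the ``simple contact'' computation, from Section \ref{sec:proofthm1}. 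The corollary on the saddle surface and the two ellipsoids then follows by matching each example to its normal form: $\mathcal S_1=V(xy-zt)$ is of type $(c)$ with $\alpha\neq1$ after a similitude (giving $5$), $\mathcal E_1$ is type $(a)$ with $1,2,4$ distinct (giving $6$), and $\mathcal E_2$ is type $(a)$ with $\alpha=1\neq\beta$ (giving $4$), consistent with the base-locus computations in Example \ref{exemple2}.

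I expect the main obstacle to be the bookkeeping of the umbilical contact points and their multiplicities as the parameters $\alpha,\beta$ specialize: deciding precisely when $\mathcal S_\infty$ meets $\mathcal C_\infty$ transversally versus tangentially, and verifying that each such contact is \emph{simple} so that the $i_P=1$ computation applies, requires a careful case analysis of the pencil of conics $V(x^2+\alpha y^2+\beta z^2)$ and $V(x^2+y^2+z^2)$ in $\mathcal H^\infty$. The singular and tangent-at-infinity cases $(b),(c),(d)$ add the further subtlety of correctly invoking Theorem \ref{factorisable} with the right factor $H$ and the right reduced degree $\tilde d_{\mathcal S}$, and of confirming that the hypotheses $\#(\tilde{\mathcal B}\cap\mathcal S)<\infty$ hold; this is where I would be most careful to avoid miscounting the contribution of the vertex or of the line at infinity.
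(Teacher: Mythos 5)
For the three surfaces of the corollary your plan is correct and is essentially the paper's own route: match each surface to a normal form by a projective similitude ($\mathcal E_1$ to type (a) with $1,\alpha,\beta$ pairwise distinct, $\mathcal E_2$ to type (a) with $\alpha=1\ne\beta$, the saddle to type (c) with $\alpha\ne 1$), then apply Theorem \ref{formulegeneralesurface} with base value $d_{\mathcal S}(d_{\mathcal S}^2-d_{\mathcal S}+1)=6$ and subtract local multiplicities at $\mathcal B_{\mathcal S}$: nothing for $\mathcal E_1$ ($\mathcal B_{\mathcal E_1}=\emptyset$, giving $6$), $i_P=1$ at each of the two simple umbilical contacts $[0:1:\pm i:0]$ for $\mathcal E_2$ (giving $4$), and $i_P=1$ at the single contact point at infinity for the saddle (giving $5$). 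The only difference is cosmetic: the paper computes these multiplicities by exhibiting explicit local parametrizations of the normal polar (e.g. \eqref{parametrisation2}), while you invoke the local formulas of Section \ref{sec:proofthm1}; the paper itself treats these as interchangeable (see the remark at the end of Section \ref{cubic}), and your identification of $[0:0:1:0]$ as an ordinary double point of $\mathcal S_\infty$ outside $\mathcal C_\infty$ correctly yields $i_P=k^2=1$ with $k=1$, matching the paper's $i_{P_1}=1$.

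However, your plan for the full Proposition \ref{quadric} contains a genuine error in cases (b) and (d), even though it does not touch the corollary. You read Example \ref{exemple1} as saying that all cones of type (b) have $\dim\mathcal B=2$ and propose to apply Theorem \ref{factorisable} ``starting from the base value $2$'' to recover $4$, $2$, $0$. But Example \ref{exemple1} asserts $\dim\mathcal B\ge 2$ only for spheres and for the isotropic cone, i.e. type (b) with $\alpha=\beta=1$; for type (b) with $1,\alpha,\beta$ pairwise distinct one has $V(F_x,F_y,F_z)=\{[0:0:0:1]\}$ and $\dim\mathcal B\le 1$, so the correct base value is $6$, and the paper obtains $c_\nu=6-2=4$ from $i_P=2$ at the vertex. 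Starting from $2$ you could never reach $4$ by subtracting nonnegative multiplicities, so that step would fail outright. Similarly, for cylinders (d) the base locus is the axis $V(x,y)$, of dimension $1$, and no factorization $\boldsymbol\alpha=t\cdot\boldsymbol{\tilde\alpha}$ exists since $t$ does not divide the coordinate $xF_y-yF_x=2(\alpha-1)xy$; the paper handles (d) not via Theorem \ref{factorisable} at all but by reducing to the planar base curve through Proposition \ref{propcylinder} and then applying Theorem \ref{thmcurves}. If you carry out your plan, restrict Theorem \ref{factorisable} to the sphere and to the cone with $\alpha=\beta=1$, and treat the remaining (b) cases and the cylinders with Theorem \ref{formulegeneralesurface} or the appendix, respectively.
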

\begin{proof}[Proof of Proposition \ref{quadric}]
Let $\mathcal S=V(F)$ be a quadric with $F$ of the form (a), (b), (c) or (d).
\begin{itemize}
\item The easiest cases is (a) with $1,\alpha,\beta$ pairwise distinct since $\mathcal B_{\mathcal S}$ is empty. In this case, since the generic
degree of the normal polar curves is 3 and since $\mathcal E_1$ has degree 2, we simply have $c_{\nu}(\mathcal E_1)=2\cdot 3=6$ (due to Theorem \ref{formulegeneralesurface}).
\item 
The case of a sphere $\mathcal S$ is analogous. In this case, $\tilde {\mathcal B}^{(0)}_{\mathcal S}\cap\mathcal S=\emptyset$ and $\deg\tilde{\mathcal P}_{A,\mathcal S}=1$ for a generic $A\in\mathbb P^3$ (see Example \ref{exemple1}).
Hence, we have $c_{\nu}(\mathcal E_1)=2\cdot 1=2$ (due to Theorem 
\ref{factorisable}).
\item In case (a) with $\alpha=1\ne\beta$, the set $\mathcal B_{\mathcal S}$ contains two points $[1:\pm i:0:0]$.
We find the parametrization $\psi(y)=[1:\pm i+y:0:0]$ of $\mathcal P_{A,\mathcal S}$ at the neighbourhood of $P[1:\pm i:0:0]$,
which gives $i_P(\mathcal S,\mathcal P_{A,\mathcal S})=\val_z(1+(\pm i+y)^2)=1$ and so $c_{\nu}(\mathcal S)=2\cdot 3-1-1=4$.
\item In case (b) with $\alpha$, $\beta$ and $1$ are pairwise distinct,
the set $\mathcal B_{\mathcal S}$ contains a single point $P[0:0:0:1]$
and a parametrization of ${\mathcal P}_{A,\mathcal S}$ 
in a neighbourhood of $P$ is
\begin{equation}\label{parametrisation1}
\psi(x)=\left[x:-\frac{bx}{d(\alpha-1)x-a}:\frac{cx}{a+d(1-\beta)x}:1\right].
\end{equation}
Hence $i_P(\mathcal S,\mathcal P_{A,\mathcal S})=\val_x(F(\psi(x)))=2$
and so $c_\nu(\mathcal S)=2\cdot 3-2=4$.
\item In case (b) with $\alpha=1\ne\beta$, we have
$\mathcal B_{\mathcal S}=\{P,P'_+,P'_-\}$ with $P[0:0:0:1]$
and $P'_{\pm}[1:\pm i:0:0]$. A
parametrization of ${\mathcal P}_{A,\mathcal S}$ 
in a neighbourhood of $P$ is given by \eqref{parametrisation1} with
$\alpha=1$ and so $i_P(\mathcal S,\mathcal P_{A,\mathcal S})=2$.
A parametrization of ${\mathcal P}_{A,\mathcal S}$ at a neighbourhood of $P'_{\pm}$ is  $\psi(z)=[1:\pm i+y:0:0]$ and so 
$i_{P'_{\pm}}(\mathcal S,\mathcal P_{A,\mathcal S})=1$. Hence
 $c_\nu(\mathcal S)=2\cdot 3-2-1-1=2$.
\item In case (b) with $\alpha=\beta=1$, for a generic $A\in\mathbb P^3$, we have $\deg\tilde{\mathcal P}_{A,\mathcal S}=1$ (see Example \ref{exemple1}) but here $\tilde{\mathcal B}^{(0)}_{\mathcal S}\cap\mathcal S=\{[0:0:0:1]\}$. We find
the parametrization $\psi(x)=[x:(bx/a):(cx/a):1]$ of $\tilde{\mathcal P}_{A,\mathcal S}$ at the neighbourhood of $P[0:0:0:1]$. Hence $i_P(\mathcal S,\tilde{\mathcal P}_{A,\mathcal S})=2$ and so $c_{\nu}(\mathcal S)=2\cdot 1-2=0$.
\item In case (c) with $\alpha\ne 1$, the only point of $\mathcal B_{\mathcal S}$
is $P_1[0:0:1:0]$ and a parametrization of ${\mathcal P}_{A,\mathcal S}$
at the neighbourhood of this point is
\begin{equation}\label{parametrisation2}
\psi(t)=\left[\frac{at^2}{d+(d-c)t}:\frac{bt^2}{t(d-c\alpha)+\alpha d}:1:t\right],
\end{equation}
which gives $i_{P_1}(\mathcal S,\mathcal P_{A,\mathcal S})=1$. Hence
$c_{\nu}(\mathcal S)=2\times 3-1=5$.
\item In case (c) with $\alpha=1$, $\mathcal B_{\mathcal S}$ is made of
three points: $P_1[0:0:1:0]$, $P_{2,\pm}[1:\pm i:0:0]$.
As in the previous case, a parametrization of ${\mathcal P}_{A,\mathcal S}$
at the neighbourhood of $P_1$ is given by \eqref{parametrisation2}
with $\alpha=1$
and so $i_{P_1}(\mathcal S,\mathcal P_{A,\mathcal S})=1$.
Now, a parametrization of ${\mathcal P}_{A,\mathcal S}$
at the neighbourhood of $P_{2,\pm}$ is $\psi(t)=[1:\pm i+y:0:0]$
and so $i_{P_{2,\pm}}=(\mathcal S,\mathcal P_{A,\mathcal S})=\val_y
(1+(y\pm i)^2)=1$.
\item For the case (d), due to Proposition \ref{propcylinder},
$c_\nu(\mathcal S)=c_\nu(\mathcal C)$ with $\mathcal C=V(x^2+\alpha y^2+z^2)\subset\mathbb P^2$ which is a circle if $\alpha=1$ and an ellipse
otherwise. Hence, due to Theorem \ref{thmcurves}, $c_\nu(\mathcal C)=2+2-0-1-1=2$ if $\alpha=1$ and $c_\nu(\mathcal C)=2+2=4$ otherwise.
\end{itemize}
\end{proof}
\subsection{Normal class of a cubic surface with singularity $E_6$}\label{cubic}
Consider $S=V(F)\subset\mathbb{P}^{3}$ with $F(x,y,z,t):=x^{2}z+z^{2}t+y^{3}$.
$\mathcal{S}$ is a singular cubic surface with $E_{6}-$singularity at
$p[0:0:0:1]$. Let a generic $A[a:b:c:d]\in\mathbb P^3$. 
The ideal of the normal polar $\mathcal{P}_{\mathcal{S},A}$ is given by
$I(\mathcal{P}_{\mathcal{S},A})=\langle H_1,H_2,H_3\rangle\subset\mathbb{C}[x,y,z,t]$ with
$H_1:=(y(x^{2}+2zt)-3y^{2}z)d-b(x^{2}+2zt)t+3y^{2}ct$,
$H_2:=(x(x^{2}+2zt)-2xz^{2})d-a(x^{2}+2zt)t+2xztc$ and $H_3:=(-2xzy+3xy^{2}%
)d-3ay^{2}t+2xztb$. $\mathcal B_{\mathcal S}$ is made of 
two points: $p$ and $q[0:0:1:0]$. Actually $q$ is the
point of tangency of $\mathcal{S}$ with $\mathcal{H}_{\infty}$. This point is  an ordinary cusp of $\mathcal{S}_{\infty}$.
\begin{enumerate}
\item Study at $p$.

Near p the ideal of the normal polar, in the chart $t=1$, $H_3=0$
gives $z=g(x,y):=\frac{3y^{2}(-xd+a)}{2x(-yd+b)}$. Now $V(A_1(x,y,g(x,y),1))$
corresponds to a quintic  with a cusp at the
origine (and with tangent cone $V(y^{2}))$. Its single branch has Puiseux expansion
$y^2=-\frac b{3a}x^3+o(x^3)$, with probranches $y=\varphi_{\varepsilon}(x)$
with $\varphi_\varepsilon(x)=i\varepsilon\sqrt{\frac b{3a}}x^{\frac 32}+o(x^{\frac 32})$
for $\varepsilon\in\{\pm 1\}$. Hence, $g(x,\varphi_\varepsilon(x))=-\frac {x^2}2+o(x^2)$.
Hence parametrizations of the probranches of $\mathcal P_{A,\mathcal S}$ at a neighbourhood
of $p$ are
$$\Gamma_{\varepsilon}(x)=[x:\varphi_\varepsilon(x):g(x,\varphi_\varepsilon(x)):1]$$
and $F(\Gamma_{\varepsilon}(x))=-\frac {x^4}4+o(x^{4})$.
Therefore $i_{p}(\mathcal{P}_{\mathcal{S},A},\mathcal S)=8$.
\item Study at $q.$ 

Assume that $b=1$. Near $q[0:0:1:0]$, in the chart $z=1$, $H_3=0$ gives
$t=h(x,y):=\frac{d(-2+3y)xy}{3ay^{2}-2x}$ and
$V(H_2(x,y,1,h(x,y)))$ is a quartic with a (tacnode) double point in $(0,0)$ with
vertical tangent and which has Puiseux expansion
$$x=\theta_{\varepsilon}(y)=\omega_{\varepsilon}a\, y^{2}+o(y^{2}),$$
with $\omega_{\varepsilon}=\frac{3-d}{2}+\frac{\varepsilon}{2}\sqrt{d(d-6)}$
for $\varepsilon\in\{\pm 1\}$ and
 $h(\theta_\varepsilon(y),y)=-\frac{2d\omega_\varepsilon}
  {3-2\omega_\varepsilon}y+o(y)$.
Hence parametrizations of the probranches of $\mathcal{P}_{\mathcal{S},A}$ in a
neighbourhood of $q$ are given by
$$\Gamma_{\varepsilon}(y):=[\theta_{\varepsilon}(y):y:1:h(\theta_\varepsilon(y),y)]$$
for $\varepsilon\in\{\pm 1\}$ and 
$F(\Gamma_{\varepsilon}(y))=-\frac{2\omega_\varepsilon d}{3-2\omega_\varepsilon}y+o(y)$.
Hence $i_{q}(\mathcal{P}_{\mathcal{S},A},\mathcal S)=2$

We can also apply directly Item (c) of Section \ref{sec:proofthm1}
to prove that $i_{q}(\mathcal{P}_{\mathcal{S},A},\mathcal S)=2$.
\end{enumerate}
Therefore, due to Theorem \ref{formulegeneralesurface}, the normal class of 
$\mathcal S=V(x^{2}z+z^{2}t+y^{3})\subset \mathbb P^3(\mathbb C)$
is
$$c_{\nu}(\mathcal{S})=3\cdot(3^{2}-3+1)-8-2=11.$$
\section{Normal class of plane curves~: Proof of Theorem \ref{thmcurves}}\label{proofcurve}
Let $\mathbf V$ be a three dimensional complex vector space and set $\mathbb P^2:=\mathbb P(\mathbf V)$ with projective coordinates
$x,y,z$. We denote by $\ell_\infty=V(z)$ the line at infinity.

Let $\mathcal C=V(F)\subset \mathbb P^2$
be an irreducible curve of degree $d\ge 2$.
For any nonsingular $m[x:y:z]\in\mathcal C$ (with coordinates $\mathbf m=(x,y,z)\in\mathbb C^3$), we write $\mathcal T_m\mathcal C$
for the tangent line to $\mathcal C$ at $m$. 
If $\mathcal T_m\mathcal C\ne\ell_\infty$, then $n_{\mathcal C}(m)=[F_x:F_y:0]$
is well defined in $\mathbb P^2$ and
{\bf the projective normal line} $\mathcal N_m\mathcal C$ to $\mathcal C$ at $m$ is the line
$(m\, n_{\mathcal C}(m))$ if $n_{\mathcal C}(m)\ne m$.
An equation of this normal line is then given by $\langle \mathbf {N}_\mathcal C({m}),\cdot\rangle$ where $N_\mathcal C:\mathbb P^2\dashrightarrow\mathbb P^2$ is the rational map defined by
\begin{equation}\label{NC}
\mathbf{N}_\mathcal C(\mathbf{m}):=\mathbf{m}\wedge\left(\begin{array}{c}F_x\\F_y\\0\end{array}\right)
   =\left(\begin{array}{c}-zF_y(m)\\ zF_x(m)\\ xF_y(m)-yF_x(m)\end{array}\right).
\end{equation}
\begin{lem}
The base points of $(N_\mathcal C)_{|\mathcal C}$ are the singular points of $\mathcal C$,
the points of tangency with the line at infinity and the points of $\{I,J\}\cap\mathcal C$.
\end{lem}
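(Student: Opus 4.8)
The plan is to read off the base locus of $(N_\mathcal{C})_{|\mathcal{C}}$ directly from the formula \eqref{NC}, following the same strategy used earlier to describe $\mathcal{B}_\mathcal{S}$ for surfaces. A point $m=[x:y:z]\in\mathcal{C}$ is a base point exactly when $\mathbf{N}_\mathcal{C}(\mathbf{m})=0$, that is, when $\mathbf{m}\wedge(F_x,F_y,0)^t=0$. This wedge vanishes if and only if either $(F_x(m),F_y(m))=(0,0)$, or $\mathbf{m}$ is proportional to $(F_x(m),F_y(m),0)$. I would organize the whole argument by splitting on whether $z\neq 0$ or $z=0$, using the Euler identity $xF_x+yF_y+zF_z=d\,F$, which reduces to $xF_x+yF_y+zF_z=0$ on $\mathcal{C}$.

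First I would dispose of the affine case $z\neq 0$. Reading off the three coordinates of \eqref{NC}, the vanishing of the first two, $-zF_y$ and $zF_x$, forces $F_x(m)=F_y(m)=0$. Feeding this into the Euler relation on $\mathcal{C}$ gives $zF_z(m)=0$, and since $z\neq 0$ we get $F_z(m)=0$ as well. Thus an affine base point is precisely a point where all three partials vanish, i.e.\ a singular point of $\mathcal{C}$; conversely any singular point clearly kills all three coordinates of \eqref{NC}.

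Next, the points at infinity $m=[x:y:0]\in\mathcal{C}\cap\ell_\infty$. Here the first two coordinates of \eqref{NC} vanish automatically, so $m$ is a base point if and only if the third coordinate vanishes, namely $xF_y-yF_x=0$. The key observation is that on $\mathcal{C}\cap\ell_\infty$ the Euler identity supplies a second relation $xF_x+yF_y=0$, so the pair $(F_x(m),F_y(m))$ satisfies the homogeneous system with matrix $\left(\begin{smallmatrix} x & y \\ -y & x \end{smallmatrix}\right)$, whose determinant is $x^2+y^2$. When $x^2+y^2\neq 0$ the system is nondegenerate and forces $F_x(m)=F_y(m)=0$, so $m$ is either singular (if $F_z(m)=0$) or a nonsingular point whose tangent line is $\ell_\infty$, i.e.\ a contact point with the line at infinity. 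When $x^2+y^2=0$, that is $m\in\{I,J\}$, the two rows of the matrix are proportional, so the Euler relation already forces $xF_y-yF_x=0$; hence every point of $\{I,J\}\cap\mathcal{C}$ is automatically a base point, with no condition on the partials. Collecting the three cases yields the asserted description.

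I expect the only delicate step to be this infinity analysis, and specifically the recognition that the isotropy condition $x^2+y^2=0$ defining the cyclic points $I$ and $J$ is exactly what makes the Euler relation and the base-point relation linearly dependent. This degeneracy is what produces $I$ and $J$ as an extra, unavoidable component of the base locus beyond the singular points and the points of tangency with $\ell_\infty$, and it is the feature that distinguishes the projective normal construction from the ordinary Gauss/polar picture.
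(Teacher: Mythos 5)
Your proof is correct and follows essentially the same route as the paper: read off the three coordinates of $\mathbf N_{\mathcal C}$ from \eqref{NC}, treat the affine and infinite points separately, and combine the base-point condition with the Euler identity $xF_x+yF_y+zF_z=d\,F=0$ on $\mathcal C$ to show that at a point of $\ell_\infty$ with $x^2+y^2\neq 0$ the conditions force $F_x=F_y=0$, while at $I,J$ the Euler relation already implies $xF_y-yF_x=0$. Your packaging of the infinity case as a $2\times 2$ homogeneous system with determinant $x^2+y^2$ is a tidy reformulation of the paper's computation, but not a different argument.
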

\begin{proof}
A point $m\in\mathcal C$ is a base point of $N_\mathcal C$ if and only if $F_x=F_y=0$ or $z=xF_y-yF_x=0$. 
Hence, singular points of $\mathcal C$ are base points of $N_{\mathcal C}$.

Let $m=[x:y:z]$ be a nonsingular point of $\mathcal C$. 
First $F_x=F_y=0$ is equivalent to $\mathcal T_m\mathcal C=\ell_\infty$.
Assume now that $z=xF_y-yF_x=0$ and $(F_x,F_y)\ne(0,0)$. Then $m=[x:y:0]=[F_x:F_y:0]$ and, 
due to the Euler formula, we have $0=-zF_z=xF_x+yF_y$ and so $x^2+y^2=0$, which implies $m=I$ or $m=J$.

Finally note that if $m\in\{I,J\}\cap\mathcal C$, then $m=[-y:x:0]$ and, 
due to the Euler formula, $0=-zF_z=xF_x+yF_y=xF_y-yF_x$.
\end{proof}
Since the degree of each non zero coordinate of $\mathbf
\mathbf N_\mathcal C$ is $d$, we have
\begin{equation}\label{lemmefondamental}
c_\nu(\mathcal C)
   =d^2-\sum_{P\in \mathcal \base\left({(N_\mathcal C)}_{\vert\mathcal C}\right)} i_P(\mathcal C,V(\langle L,\mathbf N_{\mathcal C}(\cdot)\rangle)),
\end{equation}
for a generic $L\in\mathbb P^2$, where we write 
$\base\left({(N_\mathcal C)}_{\vert\mathcal C}\right)$ 
for the set of base points of 
${(N_\mathcal C)}_{\vert\mathcal C}$.
The set $V(\langle L,\mathbf N_{\mathcal C}(\cdot)\rangle)\subset \mathbb P^2$ is called 
{\bf the normal polar} of $\mathcal C$ with respect to $L$. It satisfies
$$m \in V(\langle L, \mathbf N_{\mathcal C}(\cdot)\rangle)\quad \Leftrightarrow\quad  \mathbf N_{\mathcal C}(\mathbf m)=0\ \mbox{or}\ L\in
\mathcal N_m(\mathcal C).$$
Now, to compute the generic intersection numbers, we use the notion of probranches \cite{Halphen,Wall,Wall2}. 
See section 4 of
\cite{fredsoaz1} for details.
Let $P\in\mathcal C$ be an indeterminancy point of $N_{\mathcal C}$ and let us write $\mu_P$ for the multiplicity of $\mathcal C$ at $P$.
Recall that $\mu_P=1$ means that $P$ is a nonsingular point of $\mathcal C$. 
Let $M\in GL(\mathbf V)$ be such that $M(\mathbf O)=\mathbf P$ with $\mathbf O=(0,0,1)$ (we set also $O=[0:0:1]$)
and such that $V(x)$ is not contained in the tangent cone of $V(F\circ M)$ at $O$.
Recall that the equation of this tangent cone is the homogeneous part of lowest degree in $(x,y)$ of $F(x,y,1)\in\mathbb C[x,y]$ and
that this lowest degree is $\mu_P$.
Using the combination of the Weierstrass preparation theorem and of the Puiseux expansions, 
$$F\circ M(x,y,1)=U(x,y)\prod_{j=1}^{\mu_P}
(y-g_j(x)),$$ 
for some $U(x,y)$ in the ring of convergent series in $x,y$ with $U(0,0)\ne 0$ and where
$g_j(x)=\sum_{m\ge 1}a_{j,m}x^{\frac m{q_j}}$ for some integer $q_j\ne 0$.
The $y=g_j(x)$ correspond to the equations of the probranches of $\mathcal C$ at $P$.
Since $V(x)$ is not contained in the tangent cone of $V(F\circ M)$ at $O$, the valuation in $x$
of $g_j$ is strictly larger than or equal to 1 and so the probranch $y=g_j(x)$ is tangent to $V(y-xg_j'(0))$.
We write $\mathcal T_P^{(i)}:=M(V(y-xg_j'(0)))$ the associated (eventually singular) tangent line to $\mathcal C$ at $P$
($\mathcal T_P^{(i)}$ is the tangent to the branch of $\mathcal C$ at $P$ corresponding to this probranch)
and we denote by $i_P^{(j)}$ the tangential intersection number of this probranch:
$$i_P^{(j)}=\val_x (g_j(x)-xg_j'(0))=\val_x (g_j(x)-xg_j'(x)).$$
We recall that for any homogeneous polynomial $H\in\mathbb C[x,y,z]$, we have
\begin{eqnarray*}
i_P(\mathcal C,V(H))&=& i_{O}(V(F\circ M),V(H\circ M))\\
&=&\sum_{j=1}^{\mu_P} \val_x(H(M(G_j(x)))),
\end{eqnarray*}
where $G_j(x):=(x,g_j(x),1)$.
With these notations and results, we have
$$\Omega(\mathcal C,\ell_\infty)=\sum_{P\in\mathcal C\cap\ell_\infty} (i_P(\mathcal C,\ell_\infty)-\mu_P(\mathcal C))
= \sum_{P\in\mathcal C\cap\ell_\infty} \sum_{j:\mathcal T_P^{(j)}=\ell_\infty} (i_P^{(j)}-1).$$
For a generic $L\in\mathbf V^\vee$, we also have
\begin{eqnarray*}
i_P(\mathcal C,V(L\circ N_{\mathcal C}))
&=&\sum_{j=1}^{\mu_P} \val_x(L(N_{\mathcal C}(M(G_j(x)))))\\
&=&\sum_{j=1}^{\mu_P} \min_k \val_x([N_{\mathcal C}\circ M]_k(G_j(x))),
\end{eqnarray*}
where $[\cdot ]_k$ denotes the $k$-th coordinate.
Moreover, due to \eqref{NC}, as seen in Proposition 16 of \cite{fredsoaz2}, we have
$$\mathbf{N}_{\mathcal C}\circ M(\mathbf m)= Com(M)\cdot (\mathbf{m}\wedge\left[\Delta_{\mathbf{A}} 
       G(\mathbf m)
     \cdot \mathbf{A}+\Delta_{\mathbf{B}} G(\mathbf m)\cdot \mathbf{B}\right]),$$
where $G:=F\circ M$, $\mathbf{A}:=M^{-1}(1,0,0)$, $\mathbf{B}:=M^{-1}(0,1,0)$ and $\Delta_{(x_1,y_1,z_1)}H=x_1H_x+y_1H_y+z_1H_z$.
As seen in Lemma 33 of \cite{fredsoaz1}, we have
$$\Delta_{(x_1,y_1,z_1)} G(x,g_j(x),1)=R_j(x)W_{(x_1,y_1,z_1),j}(x),$$
where
$R_j(x)=U(x,g_j(x))\prod_{j'\ne j}(g_{j'}(x)-g_j(x))$ and $W_{(x_1,y_1,z_1),j}(x):=y_1-x_1g'_j(x)+z_1(xg'_j(x)-g_j(x))$.
Therefore, for a generic $L\in\mathbf{V}^\vee$, we have
$$
i_P(\mathcal C,V(L\circ N_{\mathcal C}))=
V_P+\sum_{j=1}^{\mu_P} \min_k \val_x([G_j(x)\wedge (W_{\mathbf{A},j}(x)\cdot \mathbf A
     +W_{\mathbf{B},j}(x)\cdot \mathbf B)]_k)
$$
where $V_P:=\sum_{j=1}^{\mu_P}\sum_{j'\ne j}\val(g_{j'}-g_j)$. 
Now, we write $h_P^{(j)}:=\min_k \val_x([G_j(x)\wedge (W_{\mathbf{A},j}(x)\cdot \mathbf A
     +W_{\mathbf{B},j}(x)\cdot \mathbf B)]_k)$ and $h_P:=\sum_{j=1}^{\mu_P}h_P^{(j)}$.
Note that $V(P)=0$ if $P$ is a nonsingular point of $\mathcal C$.
We recall that, due to Corollary 31 of \cite{fredsoaz1}, we have
$$\sum_{P\in \mathcal C\cap \base(N_{\mathcal C})} V_P= d(d-1)-d^\vee$$
and so, due to \eqref{lemmefondamental}, we obtain
\begin{equation}\label{lemme fondamental2}
c_\nu(\mathcal C)
   =d+d^\vee -\sum_{P\in \mathcal C\cap\base(N_\mathcal C)} h_P.
\end{equation}
Now we have to compute the contribution $h_P^{(j)}$ of each probranch of each 
$P\in\mathcal C\cap \base(N_{\mathcal C})$. We have seen, in Proposition 29 of \cite{fredsoaz1},
that we can adapt our choice of $M$ to each probranch (or, to be more precise, to each branch corresponding to the probranch).
This fact will be useful in the sequel. In particular, for each probranch, we take $M$ such that $g_j'(0)=0$ so
$G_j(x)\wedge (W_{\mathbf{A},j}(x)\cdot \mathbf A +W_{\mathbf{B},j}(x)\cdot \mathbf B)$ can be rewritten:
\begin{equation}\label{probranche1}
\left(\begin{array}{c}x\\ g_j(x)\\1\end{array}\right)\wedge\left(\begin{array}{c}x_Ay_A-(x_A^2+x_B^2)g'_j(x)+x_By_B+(z_Ax_A+z_Bx_B)(xg_j'(x)-g_j(x))\\
       y_A^2+y_B^2-(x_Ay_A+x_By_B)g'_j(x)+(z_Ay_A+z_By_B)(xg_j'(x)-g_j(x))\\
       y_Az_A+y_Bz_B-(x_Az_A+x_Bz_B)g'_j(x)+(z_A^2+z_B^2)(xg_j'(x)-g_j(x))\end{array}\right) .
\end{equation}
\begin{itemize}
\item Assume first that $P$ is a point of $\mathcal C$ outside $\ell_\infty$. Then for $M$
as above and such that $z_A=z_B=0$, we have
$$ G_j(0)\wedge (W_{\mathbf{A},j}(0)\cdot \mathbf A
     +W_{\mathbf{B},j}(0)\cdot \mathbf B)=\left(\begin{array}{c}
   -y_A^2-y_B^2\\x_Ay_A+x_By_B\\0\end{array}\right)$$
which is non null since $(y_A,y_B)\ne(0,0)$ and since $\mathbf A$ and $\mathbf B$ are linearly independent.
So $h_P^{(j)}=0$.
\item Assume now that $P\in\mathcal C\cap \ell_\infty\setminus\{I,J\}$ and $\mathcal T_P^{(j)}\ne\ell_\infty$. Then $y_A+iy_B\ne 0$
and $y_A-iy_B\ne 0$ (since $I,J\not\in \mathcal T_P^{(j)}$) and so $y_A^2+y_B^2\ne 0$ which together with
\eqref{probranche1} implies that $h_P^{(j)}=0$ as in the previous case.
\item Assume that $P\in\mathcal C\cap \ell_\infty\setminus\{I,J\}$ and $\mathcal T_P^{(i)}=\ell_\infty$.
Assume that $M(1,0,0)=(1,i,0)$. Hence $\mathbf A+i\mathbf B=(1,0,0)$. Then $y_A=y_B=0$, $x_A+ix_B=1$, $z_A+iz_B=0$. So $z_A^2+z_B^2=0$
and $z_Ax_A+z_Bx_B=z_A\ne 0$ (since $z_B=iz_A$ and $x_B=i(x_A-1)$). 
Note that $P\ne J$ implies also that $x_A-ix_B\ne 0$. So that $x_A^2+x_B^2\ne 0$. 
Hence, due to \refeq{probranche1}, $G_j(x)\wedge (W_{\mathbf{A},j}(x)\cdot \mathbf A +W_{\mathbf{B},j}(x)\cdot \mathbf B)$
is equal to
$$
\left(\begin{array}{c}x\\ g_j(x)\\1\end{array}\right)\wedge\left(\begin{array}{c}(x_A^2+x_B^2)g'_j(x)+z_A(xg_j'(x)-g_j(x))\\
      0\\
       z_Ag_j'(x)\end{array}\right) .
$$
Therefore we have $h_P^{(j)}=\val_x((x_A^2+x_B^2)g'_j(x))=i_P^{(j)}-1$.
\item Assume that $P=I$ and that $\mathcal T_P^{(j)}=\ell_\infty$. Take $M$ such that $M(\mathbf O)=(1,i,0)$, $\mathbf B=(1,0,0)$ and so $\mathbf A=(-i,0,1)$. Due to \refeq{probranche1}, 
$G_j(x)\wedge (W_{\mathbf{A},j}(x)\cdot \mathbf A +W_{\mathbf{B},j}(x)\cdot \mathbf B)$
is equal to
\begin{equation}
\left(\begin{array}{c}x\\ g_j(x)\\1\end{array}\right)\wedge\left(\begin{array}{c}-i(xg_j'(x)-g_j(x))\\
      0\\
       ig_j'(x)+(xg_j'(x)-g_j(x))\end{array}\right) .
\end{equation}
Note that each coordinate has valuation at least equal to $i_P^{(j)}=\val g_j$ and that the term of
degree $i_P^{(j)}$ of the second coordinate is the term of degree $i_P^{(j)}$ of 
$$-i(xg'_j(x)-g_j(x))+xig'_j(x)=ig_j(x)\ne 0$$
which is non null. Therefore $h_P^{(j)}=i_P^{(j)}$.
\item Assume finally that $P=I$ and that $\mathcal T_P^{(j)}\ne\ell_\infty$. Take $M$ such that $M(\mathbf O)=(1,i,0)$, 
$\mathbf B=(0,1,0)$ and so $\mathbf A=(0,-i,1)$.
Due to \refeq{probranche1}, 
$G_j(x)\wedge (W_{\mathbf{A},j}(x)\cdot \mathbf A +W_{\mathbf{B},j}(x)\cdot \mathbf B)$
is equal to
\begin{equation}
\left(\begin{array}{c}x\\ g_j(x)\\1\end{array}\right)\wedge\left(\begin{array}{c}0\\-i(xg_j'(x)-g_j(x))\\
      -i+(xg_j'(x)-g_j(x))\end{array}\right) .
\end{equation}
Note that each coordinate has valuation at least equal to $1$ and that the term of
degree $1$ of the second coordinate is $ix\ne 0$. Hence $i_P^{(j)}=1$.
\end{itemize}
Note that the case $P=J$ can be treated in the same way than the case $P=I$.

Theorem \ref{thmcurves} follows from \eqref{lemme fondamental2} and from the previous
computation of $h_P$.

\begin{appendix}
\section{Dimension decrease}\label{cylindreetrevolution}
Here, we consider two particular cases of hypersurfaces the normal class of which is equal to the normal class of a hypersurface of lower dimension: cylinders
(i.e. cones at a point at infinity) and revolution hypersurfaces
(circles fibers).

Let $n\ge 3$.
Let $\tilde F\in\mathbb C[u_1,...,u_n]$ be homogeneous.
We call {\bf cylinder of base $\tilde{\mathcal Z}=V(\tilde F)\subset\mathbb P^n$
and of axis $V(x_2,...,x_n)\subset\mathbb P^n$} the hypersurface $V(F)\subset\mathbb P^n$, with
$F(x_1,\dots,x_{n+1}):=\tilde F(x_2,\dots,x_{n+1})$.

\begin{prop}\label{propcylinder}
Let $n\ge 3$ and $d\ge 2$.
Let $\mathcal Z=V(F)\subset\mathbb P^n$ be the cylinder
of axis $V(x_2,...,x_n)\subset\mathbb P^n$
and of base $\tilde{\mathcal Z}=V(\tilde F)\subset\mathbb P^{n-1}$.
Then $c_{\nu}(\mathcal Z)=c_{\nu}(\tilde{\mathcal Z})$.
\end{prop}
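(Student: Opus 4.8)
The plan is to exhibit an explicit bijection between the normal lines of $\mathcal S$ through a generic point $m_1\in\mathbb P^3$ and the normal lines of the base curve $\mathcal C$ through the image of $m_1$ under the projection away from the axis direction, and then to conclude by comparing the two enumerative definitions of normal class. First I would record the special shape of the data for a cylinder. Since $F(x,y,z,t)=G(x,y,t)$ does not involve $z$, we have $F_z\equiv 0$ and $(F_x,F_y,F_z)=(G_x(x,y,t),G_y(x,y,t),0)$; hence for $m=[x:y:z:t]\in\mathcal S$ the point $n_{\mathcal S}(m)=[G_x:G_y:0:0]$ is independent of $z$, and $\mathcal N_m\mathcal S$ consists of the points $\lambda(x,y,z,t)+\mu(G_x,G_y,0,0)$. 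In particular every point of $\mathcal N_m\mathcal S$ other than $n_{\mathcal S}(m)$ has the same ratio $[z:t]$ as $m$.

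Next I would introduce the linear projection $p:\mathbb P^3\dashrightarrow\mathbb P^2$, $[x:y:z:t]\mapsto[x:y:t]$, whose center is the axis direction $[0:0:1:0]$, and identify the base $\mathcal C=V(G)$ with a curve in the target $\mathbb P^2$ (line at infinity $V(t)$). Comparing with \eqref{NC} one checks that $p(n_{\mathcal S}(m))=n_{\mathcal C}(p(m))$, and consequently $p(\mathcal N_m\mathcal S)=\mathcal N_{p(m)}\mathcal C$, while $m\in\mathcal S$ forces $p(m)\in\mathcal C$ because $F(m)=G(\bar x,\bar y,\bar t)=0$. Thus the assignment $m\mapsto p(m)$ carries each normal line of $\mathcal S$ through $m_1$ to a normal line of $\mathcal C$ through $p(m_1)$.

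The core of the argument is to show that, for generic $m_1$, this assignment is a bijection onto the normal lines of $\mathcal C$ through $p(m_1)$. Given $\bar m=[\bar x:\bar y:\bar t]\in\mathcal C$ with $p(m_1)\in\mathcal N_{\bar m}\mathcal C$, I would write $(x_1,y_1,t_1)=\lambda_0(\bar x,\bar y,\bar t)+\mu_0(G_x,G_y,0)$ with $\lambda_0\neq 0$ for generic $m_1$, and then take the lift $m=[\bar x:\bar y:z_1/\lambda_0:\bar t]$. This $m$ lies on $\mathcal S$, satisfies $m_1\in\mathcal N_m\mathcal S$, and the matching condition $z_1=\lambda_0 z$ on the $z$-coordinate shows the lift is the unique point above $\bar m$ with this property. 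This produces a two-sided inverse to $m\mapsto p(m)$, giving the desired bijection and hence $c_\nu(\mathcal S)=c_\nu(\mathcal C)$.

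The main obstacle will be the genericity bookkeeping rather than the geometry. I must check that for generic $m_1\in\mathbb P^3$ the image $p(m_1)$ is a generic point of $\mathbb P^2$ (which holds since $p$ is dominant), that $\lambda_0\neq 0$, i.e. $p(m_1)$ is not the normal-direction point $n_{\mathcal C}(\bar m)$, that none of the finitely many relevant $m$ fall into $\mathcal B_{\mathcal S}$ or onto the center $[0:0:1:0]$ (so that the projection of each $\mathcal N_m\mathcal S$ is a genuine line), and that each normal line of $\mathcal C$ through $p(m_1)$ is counted with multiplicity one. Once these transversality points are verified, the bijection matches the two counts defining the respective normal classes and the equality $c_\nu(\mathcal S)=c_\nu(\mathcal C)$ follows.
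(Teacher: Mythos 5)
Your proposal is correct and is essentially the paper's own argument: the paper observes that $\mathcal N_m\mathcal S$ at $m[x_1:y_1:z_1:t_1]$ lies in the plane $V(t_1z-z_1t)$ and rewrites the condition $P\in\mathcal N_m\mathcal S$ for generic $P[x_0:y_0:z_0:1]$ as $t_1\ne 0$, $z_1=z_0t_1$, and $P_1\in\mathcal N_{m_1}\mathcal C$ with $m_1[x_1:y_1:t_1]$, $P_1[x_0:y_0:1]$ --- which is exactly your projection-plus-unique-lift bijection, with the constraint $z_1=z_0t_1$ playing the role of your lift $z=z_1/\lambda_0$. Your extra genericity bookkeeping (avoiding $\mathcal B_{\mathcal S}$, the center $[0:0:1:0]$, points at infinity, and $\lambda_0=0$) is sound and matches what the paper leaves implicit in its chain of equivalences.
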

\begin{proof}
Note that $\mathcal Z\cap V(x_2,...,x_{n+1})\subset\sing(\mathcal Z)\subset\mathcal B_{\mathcal Z}$.
Let $m[x_1^{(1)}:\cdots:x_{n+1}^{(1)}]\in\mathcal Z\setminus V(x_2,...,x_{n+1})$
and $P[x_1^{(0)}:\cdots:x_{n+1}^{(0)}]\in\mathbb P^n\setminus V(x_2,...,x_{n+1})$.
Set $\tilde m[x_2^{(1)}:\cdots:x_{n+1}^{(1)}]\in\tilde{\mathcal Z}$
and $\tilde P[x_2^{(0)}:\cdots:x_{n+1}^{(0)}]\in\mathbb P^{n-1}$.
Note that
$n_{\mathcal Z}(m)[0:\tilde F_{u_1}(\tilde{\mathbf m}):\cdots:\tilde F_{u_{n-1}}(\tilde{\mathbf m}):0]\in\mathbb P^{n}$.
\begin{itemize}
\item Let $\mathcal H=V(\alpha x_1+\beta x_{n+1})\subset \mathbb P^n$ be a hyperplane orthogonal to $V(x_2,...,x_n)$ such that $\mathcal H\ne\mathcal H^\infty$ (i.e. $\alpha\ne 0$). Assume $m\in\mathcal H$.
Then $m\in\mathcal B_{\mathcal Z}\, \Leftrightarrow\, \tilde m\in\mathcal B_{\tilde{\mathcal Z}}$.
If $m\in\mathcal H\cap\mathcal Z\setminus\mathcal B_{\mathcal Z}$, then
$\mathcal N_m(\mathcal Z)\subset \mathcal H$.
\item Assume $P\in \mathbb P^n\setminus V(x_1,x_{n+1})$. Then
$\mathcal H:=V(x_1^{(0)}x_{n+1}-x_{n+1}^{(0)}x_{1})$ is the unique
hyperplane orthogonal to $V(x_2,...,x_n)$ containing $P$ and
$$P\in \mathcal N_m(\mathcal Z),\ m\in\mathcal Z\setminus\mathcal B_{\mathcal Z}\quad\Leftrightarrow\quad m\in\mathcal H,\ \tilde m\in\tilde{\mathcal Z}\setminus
   \mathcal B_{\tilde{\mathcal Z}},\ \tilde P\in\mathcal N_{\tilde m}(\tilde{\mathcal Z}).$$
\end{itemize}
Hence $c_\nu(\mathcal Z)=c_{\nu}(\tilde{\mathcal Z})$
\end{proof}
Let $\tilde F\in\mathbb C[u_1,...,u_n]$ be a homogeneous polynomial
of the form $\tilde F(u_1,...,u_n)=G(u_1^2,...,u_n)$ for some $G\in\mathbb C[u_1,...,u_n]$.
Let $\tilde{\mathcal Z}:=V(\tilde F)\subset\mathbb P^{n-1}$.

We call {\bf algebraic hypersurface of revolution of $\tilde{\mathcal Z}$ around 
the subspace $V(x_1,x_2)$} the hypersurface $\mathcal Z=V(F)\subset\mathbb P^n$ with
$F(x_1,....,x_{n+1}):=G(x_1^2+x_2^2,x_3,...,x_{n+1})$.

Note that if $m[x_1^{(1)}:\cdots:x_{n+1}^{(1)}]\in\mathcal Z\setminus\mathcal H^\infty$ with $x_{n+1}^{(1)}=1$, 
then the "circle" $V(x_1^2+x_2^2-(x_1^{(1)})^2-(x_2^{(1)})^2)\cap\bigcap_{i=3}^{n}V(x_i-x_i^{(1)}x_{n+1})$ of center $[0:0:x_3^{(1)}:\cdots:x_{n+1}^{(1)}]$
that passes through $m$ is contained in $\mathcal Z$.
\begin{prop}
Let $n\ge 3$ and $d\ge 2$.
Let $\mathcal Z=V(F)\subset\mathbb P^n$ be the algebraic hypersurface of revolution of $\tilde{\mathcal Z}=V(\tilde F)\subset \mathbb P^{n-1}$ (with $\tilde F\in\mathbb C[u_1,...,u_n]$ as above) around the subspace $V(x_1,x_2)$, then
$c_\nu(\mathcal Z)=c_{\nu}(\tilde{\mathcal Z})$.
\end{prop}
\begin{proof}

Let $m[x_1^{(1)}:\cdots:x_{n+1}^{(1)}]\in\mathcal Z$
and $P[x_1^{(0)}:\cdots:x_{n+1}^{(0)}]\in\mathbb P^n$.
Then
$$n_{\mathcal Z}(m)[2x_1^{(1)}G_{u_1}({\mathbf m}_1):2x_2^{(1)}G_{u_1}({\mathbf m}_1):
G_{u_2}({\mathbf m}_1):\cdots:G_{u_{n-1}}(\tilde{\mathbf m}_1):0]\in\mathbb P^n,$$
with ${\mathbf m}_1((x_1^{(1)})^2+(x_2^{(1)})^2,x_3^{(1)},...,x_{n+1}^{(1)})\in\mathbb C^n$.
Hence if $m\in\mathcal Z\cap V(x_1^2+x_2^2)\setminus\mathcal B_{\mathcal Z}$,
then $\mathcal N_m\mathcal Z\subset V(x_1^2+x_2^2)$.
Assume from now on that $m\in\mathcal Z\setminus V(x_1^2+x_2^2)$
and that $P\in\mathbb P^n\setminus (V(x_1^2+x_2^2)\cup V(x_1))$.

Let $\tilde m[y_1^{(1)}:x_3^{(1)}:\cdots:x_{n+1}^{(1)}]\in\mathbb P^{n-1}$ 
and $\tilde P[y_1^{(0)}:x_3^{(0)}:\cdots:x_{n+1}^{(0)}]\in\mathbb P^{n-1}$
with $(y_1^{(i)})^2=(x_1^{(i)})^2+(x_2^{(i)})^2$. Note that $\tilde m\in\tilde{\mathcal Z}$.
Then
$$n_{\mathcal Z}(m)[x_1^{(1)}\tilde F_{u_1}(\tilde{\mathbf m})/y_1^{(1)}:x_2^{(1)}\tilde F_{u_1}(\tilde{\mathbf m})/y_1^{(1)}:
\tilde F_{u_2}(\tilde{\mathbf m}):\cdots:\tilde F_{u_{n-1}}(\tilde{\mathbf m}):0]\in\mathbb P^n.$$
\begin{itemize}
\item Note that $m\in\mathcal B_{\mathcal Z}\ \Leftrightarrow\ \tilde m\in\mathcal B_{\tilde{\mathcal Z}}$ (since $x_1^{(1)}$ and  $x_1^{(1)}$ are not both null).
\item Let $\mathcal H=V(\alpha x_1+\beta x_{2})\subset \mathbb P^n$ be a hyperplane that contains $V(x_1,x_2)$ but not contained in $V(x_1^2+x_2^2)$
(i.e. $\alpha^2+\beta^2\ne 0$).
If $m\in\mathcal H\cap\mathcal Z\setminus\mathcal B_{\mathcal Z}$, then
$\mathcal N_m\mathcal Z\subset \mathcal H$.
\item Let $\mathcal H:=V(x_1^{(0)}x_{2}-x_{2}^{(0)}x_{1})$ be the unique
hyperplane that contains $V(x_1,x_2)$ and $P$. Then
$$P\in \mathcal N_m(\mathcal Z),\ m\in\mathcal Z\setminus\mathcal B_{\mathcal Z}\quad\Leftrightarrow\quad m\in\mathcal H,\ \tilde m\in\tilde{\mathcal Z}\setminus
   \mathcal B_{\tilde{\mathcal Z}},\ \tilde P\in\mathcal N_{\tilde m}(\tilde{\mathcal Z}),$$
by choosing $y_1^{(1)}:=y_1^{(0)}x_1^{(1)}/x_1^{(0)}$.
\end{itemize}
Hence $c_\nu(\mathcal Z)=c_{\nu}(\tilde{\mathcal Z})$
\end{proof}
\section{Projective orthogonality in $\mathbb P^n$}\label{NORMAL}
\subsection{From affine orthogonality to projective orthogonality}
Let $E_n$ be an euclidean affine $n$-space of direction the $n$-vector space $\mathbf E_n$
(endowed with some fix basis). Let $\mathbf V:=(\mathbf E_n\oplus \mathbb R)\otimes \mathbb C$
(endowed with the induced basis $\mathbf{e}_1,...,\mathbf{e}_{n+1}$). We consider the complex projective space
$\mathbb P^n:=\mathbb P(\mathbf V)$ with projective coordinates $x_1,...,x_{n+1}$.
Let us write $\pi:\mathbf V\setminus\{0\}\rightarrow\mathbb P^3$ for the canonical projection.
We denote by $\mathcal H^\infty:=V(x_{n+1})\subset \mathbb P^n$ the hyperplane at infinity.
We consider the affine space ${A}^{n}:=\mathbb{P}^{n}\setminus \mathcal{H}^{\infty }$ endowed 
with the vector space $\overrightarrow{\mathbf E}:=Span(\mathbf e_1,\cdots,\mathbf e_n)
\subset \mathbf V$ (with the affine structure 
$m+\overrightarrow{\mathbf v}=\pi(\mathbf m+\overrightarrow{\mathbf v})$ if $\overrightarrow{\mathbf v}\in \overrightarrow{\mathbf E}$ 
and $m=\pi(\mathbf m)\in A^n$ with
$\mathbf m(x_1,\cdots,x_n,1)$).

Let us consider $\mathcal{W}_1= \mathbb{P}(\mathbf W_1)
\subset\mathbb P^n$ 
and $\mathcal{W}_2 =\mathbb{P}(\mathbf W_2)\subset\mathbb P^n$ where 
$\mathbf W_1$ and $\mathbf W_2$ are two vector subspaces of $\mathbf V$
 not contained in $\overrightarrow{\mathbf E}$ 
such that $\dim \mathbf W_1+\dim\mathbf W_2=n+2$. 
Since $\mathcal W_i$ is not contained in $\mathcal H^\infty$,
$ W_i:=\mathcal{W}_i\setminus\mathcal H^\infty$ is 
an affine subspace of $A^n$ with vector space
$\overrightarrow{\mathbf{W}_i}:=\mathbf W_i \cap\overrightarrow{\mathbf E}$, 
that is to say that 
there exists $m_i$ such that $W_i=m_i+\overrightarrow{\mathbf{W}_i}$ in $ A^n$.
Consider the usual bilinear symmetric form $\langle u,v\rangle 
=\sum_{i=0}^3u_{i}v_{i}$ on $\mathbf V$, the associated orthogonality on $\mathbf V$ is
written $\boldsymbol{\perp} $.
\begin{defi}
Let us consider $\mathcal{W}_1= \mathbb{P}(\mathbf W_1)
\subset\mathbb P^n$ 
and $\mathcal{W}_2 =\mathbb{P}(\mathbf W_2)\subset\mathbb P^n$ where 
$\mathbf W_1$ and $\mathbf W_2$ are two vector subspaces of $\mathbf V$
 not contained in $\overrightarrow{\mathbf E}$ and
such that $\dim \mathbf W_1+\dim\mathbf W_2=n+2$. 
With the above notations, we say that $\mathcal W_1$ and $\mathcal W_2$ are orthogonal in $\mathbb{P}^{3}$ if $\overrightarrow {\mathbf W}_1\perp \overrightarrow{\mathbf{W}}_2$. 
We then write ${\mathcal W_1\boldsymbol{\perp }\mathcal W_2}$.
\end{defi}
Note that if $\mathcal H\subset \mathbb P^n$ and $\mathcal L
\subset\mathbb P^n$ are respectively an hyperplane and a line in
$\mathbb P^n$ not contained in $\mathcal H^\infty$, then
$\mathcal H\perp\mathcal L$ if and only if the point at infinity of $\mathcal L$
is the pole in $\mathcal H^\infty$ of the line $\mathcal H\cap\mathcal H^\infty\subset\mathcal H^\infty$ with respect to the {\bf umbilical}
$\mathcal U_\infty:=V(x_1^2+...+x_n^2)\cap\mathcal H^\infty\subset\mathcal H^\infty$. 
This leads us to the following generalization of normal lines to an hyperplane.
\begin{defi}
We say that a projective hyperplane $\mathcal H=V(a_1x_1+\cdots+a_{n+1}x_{n+1})\subset\mathbb P^n$ and a projective line $\mathcal L=\mathbb P(\mathbf L)\subset\mathbb P^n$ are orthogonal in $\mathbb P^n$ if 
$(a_1,\cdots,a_n,0)\in\mathbf L$. We then write $\mathcal L\perp\mathcal H$.
\end{defi}
It is worthful to note that, with this definition, an orthogonal line to 
an hyperplane $\mathcal H$ may be included in $\mathcal H$.

\end{appendix}

\end{document}